\newtheorem{theorem}{Theorem}
\newtheorem{proposition}{Proposition}
\newtheorem{lemma}{Lemma}
\newtheorem{corollary}{Corollary}
\newtheorem{observation}{Observation}
\begin{document}
\title{\Large\bf On extremal graphs with at most $\ell$ internally\\
disjoint Steiner trees connecting any $n-1$
vertices\footnote{Supported by NSFC No.11071130, and the ``973"
program.}}
\author{\small Xueliang Li, Yaping Mao
\\
\small Center for Combinatorics and LPMC-TJKLC
\\
\small Nankai University, Tianjin 300071, China
\\
\small lxl@nankai.edu.cn; maoyaping@ymail.com.}
\date{}
\maketitle
\begin{abstract}
The concept of maximum local connectivity $\overline {\kappa}$ of a
graph was introduced by Bollob\'{a}s. One of the problems about it
is to determine the largest number of edges
$f(n;\overline{\kappa}\leq \ell)$ for graphs of order $n$ that have
local connectivity at most $\ell$. We consider a generalization of
the above concept and problem. For $S\subseteq V(G)$ and $|S|\geq
2$, the \emph{generalized local connectivity} $\kappa(S)$ is the
maximum number of internally disjoint trees connecting $S$ in $G$.
The parameter $\overline{\kappa}_k(G)=max\{\kappa(S)|S\subseteq
V(G),|S|=k\}$ is called the \emph{maximum generalized local
connectivity} of $G$. This paper it to consider the problem of
determining the largest number $f(n;\overline{\kappa}_k\leq \ell)$
of edges for graphs of order $n$ that have maximum generalized local
connectivity at most $\ell$. The exact value of
$f(n;\overline{\kappa}_k\leq \ell)$ for $k=n,n-1$ is determined. For
a general $k$, we construct a graph to obtain a sharp lower bound.

{\flushleft\bf Keywords}: (edge-)connectivity, Steiner tree,
internally (edge-)disjoint trees, generalized local (edge-)connectivity.\\[2mm]
{\bf AMS subject classification 2010:} 05C40, 05C05, 05C35, 05C75.
\end{abstract}

\section{Introduction}

All graphs considered in this paper are undirected, finite and
simple. We refer to book \cite{Bondy} for graph theoretical notation
and terminology not described here. For any two distinct vertices
$x$ and $y$ in $G$, the \emph{local connectivity} $\kappa_{G}(x,y)$
is the maximum number of internally disjoint paths connecting $x$
and $y$. Then $\kappa(G)=\min\{\kappa_{G}(x,y)|x,y\in V(G),x\neq
y\}$ is defined as \emph{the connectivity} of $G$. In contrast to
this parameter, $\overline{\kappa}(G)=\max\{\kappa_{G}(x,y)|x,y\in
V(G),x\neq y\}$ , introduced by Bollob\'{a}s, is called the
\emph{maximum local connectivity} of $G$. The problem of determining
the smallest number of edges, $h_1(n;\overline{\kappa}\geq r)$,
which guarantees that any graph with $n$ vertices and
$h_1(n;\overline{\kappa}\geq r)$ edges will contain a pair of
vertices joined by $r$ internally disjoint paths was posed by
Erd\"{o}s and Gallai, see \cite{Bartfai} for details. Bollob\'{a}s
\cite{Bollobas1} considered the problem of determining the largest
number of edges, $f(n;\overline{\kappa}\leq \ell)$, for graphs with
$n$ vertices and local connectivity at most $\ell$, that is,
$f(n;\overline{\kappa}\leq \ell)=\max\{e(G)| |V(G)|=n\ and\
\overline{\kappa}(G)\leq \ell\}$. One can see that
$h_1(n;\overline{\kappa}\geq \ell+1)=f(n;\overline{\kappa}\leq
\ell)+1$. Similarly, let $\lambda_{G}(x,y)$ denote the local
edge-connectivity connecting $x$ and $y$ in $G$. Then
$\lambda(G)=\min\{\lambda_{G}(x,y)|x,y\in V(G),x\neq y\}$ and
$\overline{\lambda}(G)=\max\{\lambda_{G}(x,y)|x,y\in V(G),x\neq y\}$
are the edge-connectivity and maximum local edge-connectivity,
respectively. So the edge version of the above problems can be given
similarly. Set $g(n;\overline{\lambda}\leq \ell)=\max\{e(G)|
|V(G)|=n\ and\ \overline{\lambda}(G)\leq \ell\}$. Let
$h_2(n;\overline{\lambda}\geq r)$ denote the smallest number of
edges which guarantees that any graph with $n$ vertices and
$h_2(n;\overline{\kappa}\geq r)$ edges will contain a pair of
vertices joined by $r$ edge-disjoint paths. Similarly,
$h_2(n;\overline{\lambda}\geq \ell+1)= g(n;\overline{\lambda}\leq
\ell)+1$. The problem of determining the precise value of the
parameters $f(n;\overline{\kappa}\leq \ell)$,
$g(n;\overline{\lambda}\leq \ell)$, $h_1(n;\overline{\kappa}\geq
r)$, or $h_2(n;\overline{\kappa}\geq r)$ has obtained wide attention
and many results have been worked out; see \cite{Bollobas1,
Bollobas2, Bollobas3, Leonard1, Leonard2, Leonard3, Mader1, Mader2,
Thomassen}.

In \cite{LLMao}, we generalized the above classical problem. Before
introducing our generalization, we need some basic concepts and
notions. For a graph $G=(V,E)$ and a set $S\subseteq V$ of at least
two vertices, \emph{an $S$-Steiner tree} or \emph{a Steiner tree
connecting $S$} (\emph{a Steiner tree} for short) is a such subgraph
$T(V',E')$ of $G$ that is a tree with $S\subseteq V'$. Two Steiner
trees $T$ and $T'$ connecting $S$ are \emph{internally disjoint} if
$E(T)\cap E(T')=\varnothing$ and $V(T)\cap V(T')=S$. For $S\subseteq
V(G)$ and $|S|\geq 2$, the \emph{generalized local connectivity}
$\kappa(S)$ is the maximum number of internally disjoint trees
connecting $S$ in $G$. Note that when $|S|=2$ a Steiner tree
connecting $S$ is just a path connecting $S$. For an integer $k$
with $2\leq k\leq n$, the \emph{generalized connectivity},
introduced by Chartrand et al. in 1984 \cite{Chartrand}, is defined
as $\kappa_k(G)=min\{\kappa(S)|S\subseteq V(G),|S|=k\}$. For results
on the generalized connectivity, see \cite{LLSun, LL, LLL, LLZ}.
Similar to the classical maximum local connectivity, we \cite{LLMao}
introduced the parameter
$\overline{\kappa}_k(G)=max\{\kappa(S)|S\subseteq V(G),|S|=k\}$,
which is called the \emph{maximum generalized local connectivity} of
$G$. There we considered the problem of determining the largest
number of edges, $f(n;\overline{\kappa}_k\leq \ell)$, for graphs
with $n$ vertices and maximal generalized local connectivity at most
$\ell$, that is, $f(n;\overline{\kappa}_k\leq \ell)=\max\{e(G)|
|V(G)|=n\ and\ \overline{\kappa}_k(G)\leq \ell\}$. We also
considered the smallest number of edges,
$h_1(n;\overline{\kappa}_k\geq r)$, which guarantees that any graph
with $n$ vertices and $h_1(n;\overline{\kappa}_k\geq r)$ edges will
contain a set $S$ of $k$ vertices such that there are $r$ internally
disjoint $S$-trees. It is easy to check that
$h_1(n;\overline{\kappa}_k\geq \ell+1)=f(n;\overline{\kappa}_k\leq
\ell)+1$ for $0\leq \ell \leq n-\lceil k/2\rceil-1$. In
\cite{LLMao}, we determine that $f(n;\overline{\kappa}_3\leq
2)=2n-3$ for $n\geq 3$ and $n\neq 4$, and
$f(n;\overline{\kappa}_3\leq 2)=2n-2$ for $n=4$. Furthermore, we
characterized graphs attaining these values. For general $\ell$, we
constructed graphs to show that $f(n;\overline{\kappa}_3\leq
\ell)\geq \frac{\ell+2}{2}(n-2)+\frac{1}{2}$ for both $n$ and $k$
odd, and $f(n;\overline{\kappa}_3\leq \ell)\geq
\frac{\ell+2}{2}(n-2)+1$ otherwise.

We continue to study the above problems in this paper. The edge
version of these problems are also introduced and investigated. For
$S\subseteq V(G)$ and $|S|\geq 2$, the \emph{generalized local
edge-connectivity} $\lambda(S)$ is the maximum number of
edge-disjoint trees connecting $S$ in $G$. For an integer $k$ with
$2\leq k\leq n$, the \emph{generalized edge-connectivity} \cite{LMS}
is defined as $\lambda_k(G)=min\{\lambda(S)|S\subseteq
V(G),|S|=k\}$. The parameter
$\overline{\lambda}_k(G)=max\{\lambda(S)|S\subseteq V(G),|S|=k\}$ is
called the \emph{maximum generalized local edge-connectivity} of
$G$. Similarly, $g(n;\overline{\lambda}_k\leq \ell)=\max\{e(G)|
|V(G)|=n\ and\ \overline{\lambda}_k(G)\leq \ell\}$, and
$h_2(n;\overline{\lambda}_k\geq r)$ is the smallest number of edges,
$h_2(n;\overline{\lambda}_k\geq r)$, which guarantees that any graph
with $n$ vertices and $h_2(n;\overline{\lambda}_k\geq r)$ edges will
contain a set $S$ of $k$ vertices such that there are $r$
edge-disjoint $S$-trees. Similarly, $h_2(n;\overline{\lambda}_k\geq
\ell+1)= g(n;\overline{\lambda}_k\leq \ell)+1$ for $0\leq \ell \leq
n-\lceil k/2\rceil-1$.

The following result, due to Nash-Williams and Tutte, will be used
later.

\begin{theorem}\label{th1}(Nash-Williams \cite{Nash},Tutte \cite{Tutte})
A multigraph $G$ contains a system of $\ell$ edge-disjoint spanning
trees if and only if
$$
\|G/\mathscr{P}\|\geq \ell(|\mathscr{P}|-1)
$$
holds for every partition $\mathscr{P}$ of $V(G)$, where
$\|G/\mathscr{P}\|$ denotes the number of edges in $G$ between
distinct blocks of $\mathscr{P}$.
\end{theorem}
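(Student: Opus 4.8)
The plan is to treat the two directions separately, since necessity is elementary and sufficiency carries all the weight. For necessity, suppose $G$ contains edge-disjoint spanning trees $T_1,\dots,T_\ell$ and let $\mathscr{P}$ be any partition of $V(G)$ with $p=|\mathscr{P}|$ blocks. Contracting each block preserves connectedness, so each $T_i/\mathscr{P}$ is a connected multigraph on $p$ vertices and hence has at least $p-1$ edges; these are precisely the edges of $T_i$ joining distinct blocks. Since the $T_i$ are edge-disjoint, their sets of cross-edges are disjoint subsets of the cross-edge set of $G$, so $\|G/\mathscr{P}\|\ge\sum_{i=1}^{\ell}(p-1)=\ell(|\mathscr{P}|-1)$.

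For sufficiency I would argue through the cycle matroid $M=M(G)$ on ground set $E(G)$, whose rank function is $r(F)=n-c(F)$ with $n=|V(G)|$ and $c(F)$ the number of components of the spanning subgraph $\bigl(V(G),F\bigr)$. A set $A\subseteq E(G)$ decomposes into $\ell$ forests of $G$ exactly when it is independent in the $\ell$-fold matroid union $M\vee\dots\vee M$; moreover any such $A$ has $|A|\le\ell(n-1)$, and equality forces each of the $\ell$ forests to be a spanning tree. Hence $G$ has $\ell$ edge-disjoint spanning trees if and only if the largest such $A$ has size $\ell(n-1)$. By the matroid union rank formula of Nash-Williams,
$$
\max\bigl\{|A| : A \text{ independent in } M\vee\dots\vee M\bigr\}=\min_{F\subseteq E(G)}\bigl(|E(G)\setminus F|+\ell\,r(F)\bigr)=\min_{F\subseteq E(G)}\bigl(|E(G)\setminus F|+\ell(n-c(F))\bigr),
$$
so the desired packing exists if and only if $|E(G)\setminus F|\ge\ell(c(F)-1)$ for every $F\subseteq E(G)$. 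It remains to match this edge-set condition with the partition condition. Given $F\subseteq E(G)$, let $\mathscr{P}_F$ be the partition of $V(G)$ into the vertex sets of the components of $\bigl(V(G),F\bigr)$; then $c(F)=|\mathscr{P}_F|$, and every cross-edge of $\mathscr{P}_F$ avoids $F$, so $\|G/\mathscr{P}_F\|\le|E(G)\setminus F|$ and the hypothesis yields $|E(G)\setminus F|\ge\|G/\mathscr{P}_F\|\ge\ell(|\mathscr{P}_F|-1)=\ell(c(F)-1)$. Conversely, given a partition $\mathscr{P}$, let $F$ be the set of edges of $G$ with both ends in one block; then $\|G/\mathscr{P}\|=|E(G)\setminus F|$ and $c(F)\ge|\mathscr{P}|$, so the edge-set inequality gives $\|G/\mathscr{P}\|\ge\ell(c(F)-1)\ge\ell(|\mathscr{P}|-1)$.

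The one nontrivial ingredient, and the step I expect to be the main obstacle, is the matroid union rank formula itself: one either cites it or proves the special case needed here directly. The self-contained route would take $\ell$ edge-disjoint forests $F_1,\dots,F_\ell$ of $G$ with $\sum_i|E(F_i)|$ maximum, assume for contradiction that some $F_j$ is not a spanning tree, and run a layered search (a breadth-first exploration over the components of the various $F_i$ along edges that may legally be rotated from one forest to another) which either augments some forest and increases $\sum_i|E(F_i)|$, contradicting maximality, or else exhibits a set of vertices whose induced partition $\mathscr{P}$ satisfies $\|G/\mathscr{P}\|<\ell(|\mathscr{P}|-1)$, contradicting the hypothesis. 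Pinning down the exchange bookkeeping — exactly which edges may be rotated into which forest, and how the unreached components assemble into the violating partition — is the delicate part, which is why I would keep the matroid-union packaging as the main line and mention the direct argument only as an alternative.
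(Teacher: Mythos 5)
This theorem is quoted in the paper as a classical result of Nash--Williams and Tutte and is not proved there, so there is no in-paper argument to compare against; the relevant question is only whether your proof is sound, and it is. Your necessity direction (each spanning tree, after contracting the blocks of $\mathscr{P}$, stays connected on $|\mathscr{P}|$ vertices and so contributes at least $|\mathscr{P}|-1$ cross-edges, and edge-disjointness lets you add these contributions) is complete. Your sufficiency direction is the standard modern derivation: independence in the $\ell$-fold union of the cycle matroid is exactly decomposability into $\ell$ forests, a maximum such set has size $\ell(n-1)$ precisely when $\ell$ edge-disjoint spanning trees exist, and the Nash--Williams/Edmonds rank formula turns this into the condition $|E(G)\setminus F|\geq \ell(c(F)-1)$ for all $F\subseteq E(G)$. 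Your dictionary between edge subsets and vertex partitions is also handled correctly in both directions, including the two easily-overlooked points: that for the partition $\mathscr{P}_F$ into components of $(V(G),F)$ one only gets $\|G/\mathscr{P}_F\|\leq |E(G)\setminus F|$ (which is the right inequality to feed the hypothesis into), and that for the edge set $F$ of internal edges of a partition $\mathscr{P}$ one has $c(F)\geq |\mathscr{P}|$ rather than equality. Everything also goes through verbatim for multigraphs, since $r(F)=n-c(F)$ remains the rank function there. The only caveat is the one you already flag: the entire difficulty is delegated to the matroid union rank formula, which is a theorem of essentially the same depth as the packing theorem itself (indeed due to Nash--Williams), so the proof is correct but not self-contained; the original Nash--Williams and Tutte proofs, and the forest-rotation argument you sketch at the end, are the direct alternatives. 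As a citation-plus-reduction proof, what you have written is complete and correct.
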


With the help of Theorem \ref{th1}, this paper obtains the exact
value of $f(n;\overline{\kappa}_k\leq \ell)$ and
$g(n;\overline{\lambda}_k\leq \ell)$ for $k=n,n-1$. The graphs
attaining these values are characterized. It is not easy to solve
these problems for a general $k \ (3\leq k\leq n)$. So we construct
a graph class to give them a sharp lower bound.

To start with, the following two observations are easily seen.

\begin{observation}\label{obs1}
Let $G$ be a connected graph of order $n$. Then

$(1)$ $\kappa_{k}(G)\leq \lambda_{k}(G)$ and
$\overline{\kappa}_{k}(G)\leq \overline{\lambda}_{k}(G)$;

$(2)$ $\kappa_{k}(G)\leq \overline{\kappa}_{k}(G)$ and
$\lambda_{k}(G)\leq \overline{\lambda}_{k}(G)$.
\end{observation}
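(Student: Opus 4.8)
The plan is to simply unravel the definitions; both inequalities follow from the fact that ``internally disjoint'' is a strictly stronger requirement than ``edge-disjoint'' together with the trivial observation that a minimum over a nonempty set is at most the corresponding maximum.

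For part $(1)$, I would fix an arbitrary $S\subseteq V(G)$ with $|S|=k$. By definition, any family of internally disjoint Steiner trees connecting $S$ is in particular a family of edge-disjoint Steiner trees connecting $S$, because the condition $E(T)\cap E(T')=\varnothing$ is already part of the definition of internal disjointness. Hence $\kappa(S)\leq \lambda(S)$ for every such $S$. Taking the minimum over all $k$-subsets $S$ of $V(G)$ then yields $\kappa_k(G)=\min_S \kappa(S)\leq \min_S \lambda(S)=\lambda_k(G)$, and taking the maximum yields $\overline{\kappa}_k(G)=\max_S \kappa(S)\leq \max_S \lambda(S)=\overline{\lambda}_k(G)$.

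For part $(2)$, since $G$ has order $n\geq k$, the collection of subsets $S\subseteq V(G)$ with $|S|=k$ is nonempty, so the minimum of $\kappa(S)$ over this collection is at most its maximum; that is, $\kappa_k(G)\leq \overline{\kappa}_k(G)$, and the same argument gives $\lambda_k(G)\leq \overline{\lambda}_k(G)$.

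There is essentially no obstacle here. The only points that need a moment's care are checking that the family of $k$-subsets is nonempty (guaranteed by $n\geq k$), so that the step $\min\leq\max$ is legitimate, and reading off correctly from the definition in the introduction that internal disjointness implies edge-disjointness but not conversely; hence the inequalities can be strict in general.
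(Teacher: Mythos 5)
Your proof is correct and is exactly the argument the paper intends: the paper states this as an observation "easily seen" without supplying a proof, and your unraveling of the definitions (internal disjointness implies edge-disjointness, hence $\kappa(S)\leq\lambda(S)$ for each $S$, and $\min\leq\max$ over the nonempty family of $k$-subsets) is the standard justification.
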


\begin{observation}\label{obs2}
If $H$ is a spanning subgraph of $G$ of order $n$, then
$\kappa_{k}(H)\leq \kappa_{k}(G)$, $\lambda_{k}(H)\leq
\lambda_{k}(G)$, $\overline{\kappa}_{k}(H)\leq
\overline{\kappa}_{k}(G)$ and $\overline{\lambda}_{k}(H)\leq
\overline{\lambda}_{k}(G)$.
\end{observation}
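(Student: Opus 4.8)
The plan is to reduce all four inequalities to a single local observation at the level of a fixed vertex set $S$, and then pass to the appropriate extremum over all $S$ with $|S|=k$. First I would fix a set $S\subseteq V(H)=V(G)$ with $|S|\ge 2$. Since $H$ is a spanning subgraph of $G$, we have $E(H)\subseteq E(G)$, so every subgraph of $H$ is also a subgraph of $G$; in particular every $S$-Steiner tree in $H$ is an $S$-Steiner tree in $G$. Moreover, whether two $S$-trees $T$ and $T'$ are internally disjoint depends only on $E(T)$, $E(T')$, $V(T)$, $V(T')$ and $S$ --- namely on the conditions $E(T)\cap E(T')=\varnothing$ and $V(T)\cap V(T')=S$ --- and these conditions are insensitive to the ambient graph; the same is true for the weaker notion of edge-disjointness. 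Hence any family of pairwise internally disjoint (respectively edge-disjoint) $S$-trees in $H$ is also such a family in $G$, which immediately gives $\kappa_{H}(S)\le\kappa_{G}(S)$ and $\lambda_{H}(S)\le\lambda_{G}(S)$ for every such $S$.

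Second, I would obtain the four stated inequalities by taking extrema. For the minimum versions, let $S^{\ast}$ be a $k$-set attaining $\kappa_{k}(G)=\min\{\kappa_{G}(S):|S|=k\}$; then $\kappa_{k}(H)\le\kappa_{H}(S^{\ast})\le\kappa_{G}(S^{\ast})=\kappa_{k}(G)$, and the argument for $\lambda_{k}(H)\le\lambda_{k}(G)$ is identical. For the maximum versions, note that for every $k$-set $S$ we have $\kappa_{H}(S)\le\kappa_{G}(S)\le\overline{\kappa}_{k}(G)$, so taking the maximum over all such $S$ yields $\overline{\kappa}_{k}(H)\le\overline{\kappa}_{k}(G)$; likewise $\overline{\lambda}_{k}(H)\le\overline{\lambda}_{k}(G)$.

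I do not expect any genuine obstacle here, since the statement is pure monotonicity under taking spanning subgraphs. The only point deserving a word of care is the exact reading of ``internally disjoint'': one must confirm that the defining requirement $V(T)\cap V(T')=S$ (an equality, not merely an inclusion) remains the operative condition after the trees are viewed inside $G$. This is automatic, because $S\subseteq V(T)\cap V(T')$ holds for any two $S$-trees and the trees themselves are left unchanged when we regard them as subgraphs of $G$ rather than of $H$. Thus the proof consists of precisely the two paragraphs above.
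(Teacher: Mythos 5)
Your proof is correct, and it is exactly the monotonicity argument the paper has in mind when it states this observation without proof as ``easily seen'': Steiner trees and their (internal or edge) disjointness are preserved when passing from $H$ to the supergraph $G$, so $\kappa_H(S)\le\kappa_G(S)$ and $\lambda_H(S)\le\lambda_G(S)$ for every $k$-set $S$, and taking the minimum or maximum over $S$ yields all four inequalities. Nothing further is needed.
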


In \cite{LMS}, we obtained the exact value of $\lambda_k(K_n)$.

\begin{lemma}\cite{LMS}\label{lem1}
Let $n$ and $k$ be two integers such that $3\leq k\leq n$. Then
$$\lambda_k(K_n)=n-\lceil k/2 \rceil$$
\end{lemma}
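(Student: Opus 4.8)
The plan is to use the vertex-transitivity of $K_n$ to reduce to a single set: since $\lambda(S)$ is the same for every $k$-subset $S\subseteq V(K_n)$, it suffices to fix one such $S$, write $W=V(K_n)\setminus S$ (so $|W|=n-k$), and prove $\lambda(S)=n-\lceil k/2\rceil$ by matching a construction with a counting bound. For the lower bound I would exhibit $n-\lceil k/2\rceil$ pairwise edge-disjoint $S$-trees coming from two families. First, applying Theorem~\ref{th1} to the clique $K_n[S]\cong K_k$ — a routine verification of the Nash--Williams--Tutte condition, the extremal partitions being those with one large block and the rest singletons — gives $\lfloor k/2\rfloor$ edge-disjoint spanning trees of $K_k$, which are $\lfloor k/2\rfloor$ edge-disjoint $S$-trees using only edges inside $S$. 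Second, for each $w\in W$ the star $T_w$ with edge set $\{\,wv:v\in S\,\}$ is an $S$-tree, and the $|W|=n-k$ stars are pairwise edge-disjoint (the centre is recovered from any of its edges) and disjoint from the first family (their edges all meet $W$). Together this yields $\lfloor k/2\rfloor+(n-k)=n-\lceil k/2\rceil$ edge-disjoint $S$-trees.

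For the upper bound I would run a weighting argument. Assign weight $\tfrac{1}{k-1}$ to each edge inside $S$, weight $\tfrac1k$ to each edge between $S$ and $W$, and weight $0$ to each edge inside $W$; the total weight of $K_n$ is then $\tfrac{1}{k-1}\binom k2+\tfrac1k\cdot k(n-k)=\tfrac k2+(n-k)=n-\tfrac k2$. Given edge-disjoint $S$-trees $T_1,\dots,T_\ell$, I may replace each $T_i$ by a minimal $S$-subtree without destroying edge-disjointness, so it suffices to prove that every minimal $S$-tree has weight at least $1$: then $\ell\le\sum_i w(T_i)\le n-\tfrac k2$, and since $\lambda(S)$ is an integer this sharpens to $\ell\le\lfloor n-\tfrac k2\rfloor=n-\lceil k/2\rceil$, completing the proof.

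The heart of the matter — and the step I expect to be the main obstacle — is that weight bound for minimal $S$-trees. Let $T$ be a minimal $S$-tree with $a$ edges inside $S$ and $b$ edges between $S$ and $W$, and let $p$ be the number of components of the forest $T[S]$, so $a=k-p$ and $1\le p\le k$. If $p=1$ then $T[S]$ is a subtree of $T$ containing $S$, so minimality forces $T=T[S]$, hence $a=k-1$ and $w(T)=1$. If $p\ge 2$, I would contract each component of $T[S]$ inside $T$ to get a tree $T^{*}$ on the $p$ contracted $S$-nodes together with the $j\ge1$ Steiner vertices; every leaf of $T^{*}$ must be an $S$-node, since a Steiner vertex that is a leaf of $T^{*}$ is either a leaf of $T$ (excluded by minimality) or has two neighbours in one component of $T[S]$ (creating a cycle in $T$). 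Thus each of the $p$ $S$-nodes has degree at least $1$ in $T^{*}$, and because the corresponding edges are $S$-to-$W$ edges of $T$ we obtain $b\ge p$; therefore $w(T)=\tfrac{a}{k-1}+\tfrac bk\ge\tfrac{k-p}{k-1}+\tfrac pk=\tfrac{k^{2}-p}{k(k-1)}\ge1$, using $p\le k$. Verifying these structural claims carefully — in particular the inequality $b\ge p$ via the contracted tree $T^{*}$ — is the one non-routine point; the edge-count bookkeeping and the final inequalities are straightforward.
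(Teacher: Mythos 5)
Your proof is correct. Note that the paper does not prove this lemma itself (it is quoted from \cite{LMS}), but the machinery it does import from that source --- Lemma \ref{lem4}, which says a tree in $\mathscr{T}_1$ uses $k-1$ edges of $E(G[S])\cup E_G[S,\bar S]$ and a tree in $\mathscr{T}_2$ uses at least $k$ of them, and the counting in Lemma \ref{lem5} built on it --- is exactly the integer form of your weighting bound: your structural claim $b\ge p$ gives $a+b\ge k$ when $T$ meets $\bar S$ and $a=k-1$ otherwise, so your upper-bound argument is essentially the same edge-count over $E(G[S])\cup E_G[S,\bar S]$, just packaged fractionally; the lower-bound construction ($\lfloor k/2\rfloor$ spanning trees of $K_k$ plus the $n-k$ stars centred in $\bar S$) is likewise the standard one.
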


From Lemma \ref{lem1}, we can derive sharp bounds of
$\overline{\lambda}_k(G)$.

\begin{observation}\label{obs3}
For a connected graph $G$ of order $n$ and $3\leq k\leq n$, $1\leq
\overline{\lambda}_k(G)\leq n-\lceil k/2 \rceil$. Moreover, the
upper and lower bounds are sharp.
\end{observation}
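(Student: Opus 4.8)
The plan is to prove the two bounds separately and then exhibit an extremal graph for each. The lower bound is essentially a triviality: since $G$ is connected of order $n$, it contains a spanning tree $T$, and $T$ is an $S$-tree for every $S\subseteq V(G)$ with $|S|=k$. Hence $\lambda(S)\geq 1$ for all such $S$, and therefore $\overline{\lambda}_k(G)=\max\{\lambda(S): |S|=k\}\geq 1$.

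For the upper bound I would invoke Observation \ref{obs2}: since $G$ is a spanning subgraph of $K_n$, we have $\overline{\lambda}_k(G)\leq \overline{\lambda}_k(K_n)$. Next I would observe that the automorphism group of $K_n$ (the full symmetric group on $V(K_n)$) acts transitively on the $k$-element subsets of $V(K_n)$, and any such automorphism carries $S$-trees to $S'$-trees bijectively while preserving edge-disjointness; hence $\lambda(S)$ does not depend on the choice of the $k$-set $S$, so $\overline{\lambda}_k(K_n)=\lambda_k(K_n)$. Combining this with Lemma \ref{lem1}, which gives $\lambda_k(K_n)=n-\lceil k/2\rceil$, we conclude $\overline{\lambda}_k(G)\leq n-\lceil k/2\rceil$.

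Sharpness of the upper bound is immediate from the same computation, taking $G=K_n$. For sharpness of the lower bound I would take $G$ to be any tree of order $n$ — for definiteness the path $P_n$, which is admissible since $n\geq k\geq 3$. In a tree there is a unique minimal subtree $T_S$ connecting a prescribed set $S$ (the Steiner tree, formed by the union of the pairwise paths among vertices of $S$), and since $|S|=k\geq 2$ this $T_S$ contains at least one edge. Every $S$-tree of a tree must contain all of $T_S$, because within any subtree the path between two vertices of $S$ is unique and coincides with the corresponding path of $G$. Thus no two $S$-trees can be edge-disjoint, so $\lambda(S)=1$ for every $k$-set $S$, giving $\overline{\lambda}_k(P_n)=1$.

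The whole argument is short and I do not anticipate a real obstacle; the only two points requiring a little care are the identity $\overline{\lambda}_k(K_n)=\lambda_k(K_n)$, which rests on the action of $\mathrm{Aut}(K_n)$ being transitive on $k$-subsets rather than merely on vertices, and the remark that the minimal $S$-tree inside a tree is nontrivial precisely because $k\geq 2$, which is what forces the value $1$ in the extremal example.
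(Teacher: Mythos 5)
Your proof is correct and follows essentially the same route as the paper: both deduce the upper bound from $\overline{\lambda}_k(G)\leq\overline{\lambda}_k(K_n)=\lambda_k(K_n)=n-\lceil k/2\rceil$ via the symmetry of $K_n$ together with Lemma \ref{lem1}, obtain the lower bound from connectivity, and exhibit $K_n$ and a tree as the extremal examples. Your version merely spells out the details (the transitive action on $k$-subsets and the uniqueness of the Steiner subtree in a tree) that the paper leaves to the reader.
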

\begin{proof}
From the definitions of $\overline{\lambda}_k(G)$ and $\lambda_k(G)$
and the symmetricity of a complete graph,
$\overline{\lambda}_k(K_n)= \lambda_k(K_n)=n-\lceil
\frac{k}{2}\rceil$. So for a connected graph $G$ of order $n$ it
follows that $\overline{\lambda}_k(G)\leq \overline{\lambda}_k(K_n)=
n-\lceil \frac{k}{2} \rceil$. Since $G$ is connected,
$\overline{\lambda}_k(G)\geq 1$. So $1\leq
\overline{\lambda}_k(G)\leq n-\lceil \frac{k}{2} \rceil$.
\end{proof}

One can easily check that the complete $K_n$ attains the upper bound
and any tree $T$ of order $n$ attains the lower bound. Combining
Observation \ref{obs3} with $(1)$ of Observation \ref{obs1}, the
following observation is immediate.

\begin{observation}\label{obs4}
For a connected graph $G$ of order $n$ and $3\leq k\leq n$, $1\leq
\overline{\kappa}_k(G)\leq n-\lceil k/2 \rceil$. Moreover, the upper
and lower bounds are sharp.
\end{observation}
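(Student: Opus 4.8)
The plan is to derive both bounds instantly from the material already in hand, and then to treat sharpness. For the inequalities: by part $(1)$ of Observation \ref{obs1} we have $\overline{\kappa}_k(G)\leq \overline{\lambda}_k(G)$, and by Observation \ref{obs3} we have $\overline{\lambda}_k(G)\leq n-\lceil k/2\rceil$, so $\overline{\kappa}_k(G)\leq n-\lceil k/2\rceil$. For the lower bound, since $G$ is connected it has a spanning tree, which is in particular an $S$-tree for every $k$-set $S$, so $\kappa(S)\geq 1$ for all such $S$ and hence $\overline{\kappa}_k(G)\geq 1$. This is the "immediate" part.

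For sharpness of the lower bound I would take $G=T$ to be any tree of order $n$. Given a $k$-set $S$ with $k\geq 3$, every $S$-tree in $T$ is a subtree of $T$ and therefore contains the (unique) minimal subtree $T_S$ of $T$ spanning $S$; since $|S|\geq 2$, $T_S$ has at least one edge. Thus any two $S$-trees share an edge of $T_S$, so they cannot be internally disjoint, giving $\kappa(S)=1$ for every $k$-set and hence $\overline{\kappa}_k(T)=1$. This shows the bound $\overline{\kappa}_k(G)\geq 1$ is attained.

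For sharpness of the upper bound I would show $\overline{\kappa}_k(K_n)=n-\lceil k/2\rceil$; by the inequality already proved it suffices to exhibit a $k$-set $S$ with $\kappa(S)\geq n-\lceil k/2\rceil$. Write $V(K_n)=S\cup W$ with $S=\{v_1,\dots,v_k\}$ and $W=\{w_1,\dots,w_{n-k}\}$. The first family of $S$-trees consists of the $n-k$ stars $T_j$ with center $w_j$ and leaf set $S$; distinct such stars meet exactly in $S$ and are edge-disjoint, and every edge of every $T_j$ has an endpoint in $W$. The second family lives inside $K[S]\cong K_k$: I claim $K_k$ has $\lfloor k/2\rfloor$ edge-disjoint spanning trees, which are then automatically internally disjoint from each other (all their vertices lie in $S$ and each spans $S$) and edge-disjoint from every $T_j$. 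Together this yields $(n-k)+\lfloor k/2\rfloor=n-\lceil k/2\rceil$ internally disjoint $S$-trees, so $\kappa(S)\geq n-\lceil k/2\rceil$ and equality follows.

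The one genuine computation, and the place I expect the only friction, is the claim that $K_k$ contains $\lfloor k/2\rfloor$ edge-disjoint spanning trees; I would verify it through Theorem \ref{th1}. For a partition $\mathscr{P}$ of $V(K_k)$ into blocks of sizes $a_1,\dots,a_t$ with $\sum_i a_i=k$, the number of crossing edges is $\binom{k}{2}-\sum_i\binom{a_i}{2}$, and since $\sum_i\binom{a_i}{2}$ is Schur-convex it is maximized at $(k-t+1,1,\dots,1)$; substituting this extremal profile reduces the required inequality $\binom{k}{2}-\sum_i\binom{a_i}{2}\geq \lfloor k/2\rfloor(t-1)$ to the elementary bound $t-1\leq 2k-1-2\lfloor k/2\rfloor$, which holds for all $1\leq t\leq k$ in both parities of $k$. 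This establishes the claim and completes the proof of sharpness.
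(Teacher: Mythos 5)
Your derivation of the bounds is exactly the paper's route: combine Observation \ref{obs1}(1) with Observation \ref{obs3} for the upper bound, and use connectivity (a spanning tree) for the lower bound. The paper merely asserts sharpness ("one can easily check") with the same witnesses $K_n$ and a tree, so your explicit constructions --- the $n-k$ stars centered outside $S$ plus $\lfloor k/2\rfloor$ edge-disjoint spanning trees of $K_k$ verified via Theorem \ref{th1}, and the unique-Steiner-subtree argument for trees --- are correct and simply fill in details the paper leaves to the reader.
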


\section{The case $k=n$}

In this section, we determine the exact value of
$f(n;\overline{\lambda}_k\leq \ell)$ for the case $k=n$. This is
also a preparation for the next section. From Observation
\ref{obs3}, $1\leq \overline{\lambda}_n(G)\leq
\lfloor\frac{n}{2}\rfloor$. In order to make the parameter
$f(n;\overline{\lambda}_k\leq \ell)$ to be meaningful, we assume
that $1\leq \ell \leq \lfloor\frac{n}{2}\rfloor$. Let us focus on
the the case $1\leq \ell\leq \lfloor\frac{n-4}{2}\rfloor$ and begin
with a lemma derived from Theorem \ref{th1}.

\begin{lemma}\label{lem2}
Let $G$ be a connected graph of order $n \ (n\geq 5)$. If $e(G)\geq
{{n-1}\choose{2}}+\ell \ (1\leq \ell\leq
\lfloor\frac{n-4}{2}\rfloor)$ and $\delta(G)\geq \ell+1$, then $G$
contains $\ell+1$ edge-disjoint spanning trees.
\end{lemma}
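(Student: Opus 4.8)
The plan is to apply the Nash-Williams/Tutte criterion (Theorem~\ref{th1}) directly: I must show that for every partition $\mathscr{P}=\{V_1,\dots,V_t\}$ of $V(G)$ with $t=|\mathscr{P}|\geq 2$ blocks, the number of crossing edges satisfies $\|G/\mathscr{P}\|\geq (\ell+1)(t-1)$. The strategy is to lower-bound $\|G/\mathscr{P}\|$ in two different ways and argue that at least one of them suffices in every regime of $t$. The first bound comes from counting non-crossing edges: if block $V_i$ has $n_i$ vertices, then the edges inside $V_i$ number at most $\binom{n_i}{2}$, so $\|G/\mathscr{P}\|\geq e(G)-\sum_{i=1}^t\binom{n_i}{2}\geq \binom{n-1}{2}+\ell-\sum_i\binom{n_i}{2}$, and since $\sum n_i=n$ the sum $\sum_i\binom{n_i}{2}$ is maximized when one block has $n-t+1$ vertices and the others are singletons, giving $\sum_i\binom{n_i}{2}\leq\binom{n-t+1}{2}$. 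This yields a clean bound $\|G/\mathscr{P}\|\geq\binom{n-1}{2}-\binom{n-t+1}{2}+\ell$ that is strong when $t$ is small. The second bound uses the minimum degree hypothesis: each singleton block contributes at least $\delta(G)\geq\ell+1$ crossing edges (its full degree), so if $\mathscr{P}$ has $s$ singleton blocks then $\|G/\mathscr{P}\|\geq\frac{1}{2}s(\ell+1)$ — or, more carefully, one can sum degrees over all vertices in all but the largest block.

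The key steps, in order: first dispose of the boundary case $t=2$, where the requirement is just $\|G/\mathscr{P}\|\geq\ell+1$; here the edge-count bound gives $\binom{n-1}{2}-\binom{n-1}{2}+\ell=\ell$ in the worst case (one singleton, one block of size $n-1$), so I instead use that the singleton vertex has degree $\geq\ell+1$, all going across — done. For the general case I would split on the size of $t$ relative to $n$. When $t$ is close to $n$ (say $t\geq n-2\ell$ or some explicit threshold coming from the arithmetic), most blocks are singletons, and the degree-based bound $\|G/\mathscr{P}\|\gtrsim\frac{1}{2}(\text{(number of non-isolated-block vertices)})\cdot(\ell+1)$ combined with the fact that a $t$-block partition has at least $t - (n-t)$ singletons when blocks are small should beat $(\ell+1)(t-1)$. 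When $t$ is small-to-moderate, I use the edge-counting bound and must verify $\binom{n-1}{2}-\binom{n-t+1}{2}+\ell\geq(\ell+1)(t-1)$; expanding, $\binom{n-1}{2}-\binom{n-t+1}{2}=\sum_{j=0}^{t-2}(n-2-j)=(t-1)(n-2)-\binom{t-1}{2}$, so the inequality becomes $(t-1)(n-2)-\binom{t-1}{2}+\ell\geq(\ell+1)(t-1)$, i.e. $(t-1)(n-3-\ell)\geq\binom{t-1}{2}-\ell$, which (writing $m=t-1$) is $m(n-3-\ell)\geq\binom{m}{2}-\ell$; since $\ell\leq\lfloor(n-4)/2\rfloor$ gives $n-3-\ell\geq\lceil(n-2)/2\rceil$, this holds as long as $m\leq n-3-\ell$ roughly, i.e. for all $t$ up to about $n-2-\ell$.

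The main obstacle I anticipate is the \textbf{intermediate range} of $t$ — where neither the crude edge-count bound nor the crude degree bound alone is obviously enough — and in stitching the two bounds together cleanly so that the thresholds overlap with no gap. The fix is to combine them: bound $\|G/\mathscr{P}\|$ by taking the non-crossing edges to live only inside the non-singleton blocks (which collectively have at most $n - s$ vertices spread over $t-s$ blocks, where $s$ is the number of singletons), giving $\|G/\mathscr{P}\|\geq e(G)-\binom{n-s-(t-s)+1}{2}\geq\binom{n-1}{2}+\ell-\binom{n-t+1}{2}$ as before but now also $\geq$ a term linear in $s$; then optimize over $s$. Equivalently, one can note the convexity estimate $\sum_i\binom{n_i}{2}\leq\binom{n_{\max}}{2}$ is wasteful and the true worst case (subject to $\delta\geq\ell+1$ forcing $n_i\geq\ell+2$ or $n_i=1$) is more restrictive. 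I would also need to double-check the hypothesis $n\geq 5$ and $\ell\geq 1$ are actually used in pinning down the small-$t$ arithmetic (e.g. to ensure $n-3-\ell\geq 1$), and confirm the edge bound $e(G)\geq\binom{n-1}{2}+\ell$ — slightly below $\binom{n}{2}$ — is tight enough, since the whole proof is essentially showing this graph is ``dense enough everywhere'' for the Nash--Williams partition condition.
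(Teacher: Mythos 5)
Your overall strategy is exactly the paper's: apply Theorem~\ref{th1}, handle the $t=2$ partition with a singleton block via $\delta(G)\geq\ell+1$, and handle everything else by the edge count $\|G/\mathscr{P}\|\geq e(G)-\sum_i\binom{n_i}{2}$ together with the convexity estimate $\sum_i\binom{n_i}{2}\leq\binom{n-t+1}{2}$. However, as written the proposal has two problems. First, an off-by-one in the telescoping: $\binom{n-1}{2}-\binom{n-t+1}{2}$ is a sum of $t-2$ terms, namely $\frac{(t-2)(2n-t-1)}{2}$, not $(t-1)(n-2)-\binom{t-1}{2}=\frac{(t-1)(2n-t-2)}{2}$; your expression overstates the crossing-edge bound by $n-t$, so the inequality you verify is weaker than the one you actually need. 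Second, and more importantly, you declare the argument incomplete in an ``intermediate range'' of $t$ and offer only a sketch of a combined edge/degree bound, optimized over the number of singletons, without carrying it out. A proof that ends with ``optimize over $s$'' and ``the true worst case is more restrictive'' for the range where the stated bounds allegedly fail is not complete.

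The good news is that the obstacle you anticipate does not exist, so no combination of bounds is needed. With the correct algebra, the requirement $\binom{n-1}{2}+\ell-\binom{n-t+1}{2}\geq(\ell+1)(t-1)$ reduces to $(t-2)(2n-t-2\ell-3)\geq 2$, and since $2\ell\leq n-4$ and $t\leq n$ one has $2n-t-2\ell-3\geq 2n-n-(n-4)-3=1$, with the product at least $2$ for every $3\leq t\leq n$ (the only tight spot, $2n-t-2\ell-3=1$, forces $t=n$, where $t-2=n-2\geq 2$). This is precisely how the paper closes the general case; your conservative estimate ``$m\leq n-3-\ell$'' undersells your own bound by roughly a factor of two. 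Finally, for $t=2$ you dispose only of the singleton block; you should also note that when both blocks have size at least $2$ the worst case is $n_1=2$, giving $\|G/\mathscr{P}\|\geq n-3+\ell\geq\ell+1$ for $n\geq 4$, which is the one remaining line the paper includes.
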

\begin{proof}
Let $\mathscr{P}=\bigcup_{i=1}^pV_i$ be a partition of $V(G)$ with
$|V_i|=n_i \ (1\leq i\leq p)$, and $\mathcal {E}_p$ be the set of
edges between distinct blocks of $\mathscr{P}$ in $G$.  It suffices
to show $|\mathcal {E}_p|\geq (\ell+1)(p-1)$ so that we can use
Theorem \ref{th1}.

The case $p=1$ is trivial, thus we assume $p\geq 2$. For $p=2$, we
have $\mathscr{P}=V_1\cup V_2$. Set $|V_1|=n_1$. Then $|V_2|=n-n_1$.
If $n_1=1$ or $n_1=n-1$, then $|\mathcal {E}_2|=|E_G[V_1,V_2]|\geq
\ell+1$ since $\delta(G)\geq \ell+1$. Suppose $2\leq n_1 \leq n-2$.
Then $|\mathcal {E}_2|=|E_G[V_1,V_2]|\geq
{{n-1}\choose{2}}+\ell-{{n_1}\choose{2}}-{{n-n_1}\choose{2}}
=-n_1^2+nn_1+\ell-(n-1)$. Since $2\leq n_1 \leq n-2$, one can see
that $|\mathcal {E}_2|$ attains its minimum value when $n_1=2$ or
$n_1=n-2$. Thus $|\mathcal {E}_2|\geq n-3+\ell\geq \ell+1$. So the
conclusion is true for $p=2$ by Theorem \ref{th1}.

Consider the case $p=n$. To have $|\mathcal {E}_{n}|\geq
(\ell+1)(n-1)$, we must have ${{n-1}\choose{2}}+\ell\geq
(\ell+1)(n-1)$, that is, $(n-2\ell-3)(n-2)\geq 2$. Since $\ell\leq
\lfloor\frac{n-4}{2}\rfloor$, this inequality holds. The case
$p=n-1$ can be proved similarly. Since $|\mathcal {E}_{n-1}|\geq
{{n-1}\choose{2}}+\ell-1$, we need the inequality
$\frac{(n-1)(n-2)}{2}+\ell-1\geq (\ell+1)(n-2)$, that is,
$(n-2\ell-3)(n-3)+(n-5)\geq 0$. Since $\ell\leq
\lfloor\frac{n-4}{2}\rfloor$, this inequality holds.

Let us consider the remaining case $p$ for $3\leq p\leq n-2$.
Clearly, $|\mathcal {E}_p|\geq
e(G)-\sum_{i=1}^p{{n_i}\choose{2}}\geq
{{n-1}\choose{2}}+\ell-\sum_{i=1}^p{{n_i}\choose{2}}$. We will show
that ${{n-1}\choose{2}}+\ell-\sum_{i=1}^p{{n_i}\choose{2}}\geq
(\ell+1)(p-1)$, that is, ${{n-1}\choose{2}}+\ell-(\ell+1)(p-1)\geq
\sum_{i=1}^p{{n_i}\choose{2}}$. Actually, we only need to prove that
$\frac{(n-1)(n-2)}{2}-(\ell+1)(p-2)-1\geq
max\{\sum_{i=1}^p{{n_i}\choose{2}}\}$. Since
$f(n_1,n_2,\cdots,n_p)=\sum_{i=1}^p{{n_i}\choose{2}}$ achieves its
maximum value when $n_1=n_2=\cdots=n_{p-1}=1$ and $n_p=n-p+1$, we
need the inequality $\frac{(n-1)(n-2)}{2}-(\ell+1)(p-2)-1\geq
{{1}\choose{2}}(p-1)+{{n-p+1}\choose{2}}$, that is,
$(n-1)(n-2)-2(\ell+1)(p-2)-2\geq (n-p+1)(n-p)$. Thus this inequality
is equivalent to $(p-2)(2n-p-2\ell-3)\geq 2$. Since $1\leq \ell \leq
\lfloor\frac{n-4}{2}\rfloor$ and $3\leq p\leq n-2$, one can see that
the inequality holds. Thus, $|\mathcal {E}_p|\geq (\ell+1)(p-1)$.
From Theorem \ref{th1}, we know that there exist $\ell+1$
edge-disjoint spanning trees, as desired.
\end{proof}

In \cite{LMS}, the graphs with
$\kappa_k(G)=n-\lceil\frac{k}{2}\rceil$ and
$\lambda_k(G)=n-\lceil\frac{k}{2}\rceil$ were characterized,
respectively.

\begin{lemma}\cite{LMS}\label{lem3}
For a connected graph $G$ of order $n$ and $3\leq k\leq n$,
$\kappa_k(G)=n-\lceil\frac{k}{2}\rceil$ or
$\lambda_k(G)=n-\lceil\frac{k}{2}\rceil$ if and only if $G=K_n$ for
$k$ even; $G=K_n\setminus M$ for $k$ odd, where $M$ is an edge set
such that $0\leq |M|\leq \frac{k-1}{2}$.
\end{lemma}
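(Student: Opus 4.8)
The plan is to reduce the statement to its edge version and then prove the two implications separately. By Observations~\ref{obs1}, \ref{obs3} and \ref{obs4} one always has $\kappa_k(G)\le\lambda_k(G)\le n-\lceil k/2\rceil$, so $\kappa_k(G)=n-\lceil k/2\rceil$ already forces $\lambda_k(G)=n-\lceil k/2\rceil$; hence the hypothesis ``$\kappa_k(G)=n-\lceil k/2\rceil$ or $\lambda_k(G)=n-\lceil k/2\rceil$'' is equivalent to $\lambda_k(G)=n-\lceil k/2\rceil$, and the graphs produced below will satisfy the ``or'' through their $\lambda_k$. Thus it suffices to characterize the connected graphs $G$ of order $n$ with $\lambda_k(G)=n-\lceil k/2\rceil$.

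\emph{Sufficiency.} For $k$ even and $G=K_n$ this is Lemma~\ref{lem1}. For $k$ odd and $G=K_n\setminus M$ with $0\le|M|\le(k-1)/2$, I would fix an arbitrary $k$-set $S$, write $M=M_S\cup M_1\cup M_0$ according to whether an edge has two, one, or no endpoints in $S$, and exhibit $n-(k+1)/2$ internally disjoint $S$-trees in two batches. First, inside $G[S]=K_k\setminus M_S$ one packs $(k-1)/2$ edge-disjoint spanning trees; this follows from Theorem~\ref{th1} because, by the arithmetic already used in the proof of Lemma~\ref{lem2}, every partition $\mathscr{P}$ of $S$ into $p\ge 2$ blocks satisfies $\|(K_k\setminus M_S)/\mathscr{P}\|\ge\binom{k}{2}-\binom{k-p+1}{2}-|M_S|\ge\frac{k-1}{2}(p-1)$, the last step reducing to $(p-1)(k-p+1)\ge k-1$, which holds for $2\le p\le k$. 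Such a packing leaves exactly $(k-1)/2-|M_S|$ edges of $G[S]$ unused. Second, for each of the $n-k$ vertices $u\in V(G)\setminus S$ one takes the star centred at $u$ with leaf set $S$; this star lies in $G$ unless some $uv$ with $v\in S$ belongs to $M_1$, and the number of such ``broken'' stars is $|M_1|\le(k-1)/2-|M_S|-|M_0|\le(k-1)/2-|M_S|$, which is at most the number of unused edges of $G[S]$. One then repairs each broken star at $u$ (missing $uv$) by joining $v$ to the star on $S\setminus\{v\}$ through a hitherto unused edge of $G[S]$ at $v$, using the completely unused clique on $V(G)\setminus S$ as extra room to keep all trees internally disjoint. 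This yields $(n-k)+(k-1)/2=n-(k+1)/2$ internally disjoint $S$-trees, so $\lambda(S)\ge n-(k+1)/2$, and with the upper bound from Observations~\ref{obs1} and \ref{obs3} we get $\lambda_k(G)=n-(k+1)/2=n-\lceil k/2\rceil$.

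\emph{Necessity.} Assume $\lambda_k(G)=n-\lceil k/2\rceil$. Every $S$-tree uses at least one edge at each vertex of $S$, so $\lambda(S)\le\min_{v\in S}d_G(v)$; choosing $S$ to contain a vertex of minimum degree gives $\delta(G)\ge n-\lceil k/2\rceil$, i.e.\ $\Delta(\overline G)\le\lceil k/2\rceil-1$. The task is then to upgrade this to $E(\overline G)=\varnothing$ for $k$ even and $|E(\overline G)|\le(k-1)/2$ for $k$ odd. Supposing $\overline G$ has more edges, I would pick a $k$-set $S$ incident with as many ends of $\overline G$-edges as possible, take a maximum family of internally disjoint minimal $S$-trees, and split it into the $a$ trees lying entirely in $G[S]$ and the $b=\lambda(S)-a$ trees that use a vertex outside $S$. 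The first kind are edge-disjoint subgraphs of $G[S]$, so $a(k-1)\le e(G[S])=\binom{k}{2}-e(\overline G[S])$; each tree of the second kind, being minimal, has every vertex outside $S$ of degree at least two, hence uses at least two edges between $S$ and $V(G)\setminus S$. Counting the edges used by all $\lambda(S)$ trees according to type --- inside $S$, between $S$ and $V(G)\setminus S$, and inside $V(G)\setminus S$ --- and combining these inequalities with $\Delta(\overline G)\le\lceil k/2\rceil-1$, one finds that $\lambda(S)=n-\lceil k/2\rceil$ is impossible unless $\overline G$ has the claimed form; together with sufficiency this closes the equivalence.

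\emph{Main obstacle.} The delicate point is the necessity argument when $n-k$ is large: then an $S$-tree can be a single star at one vertex outside $S$ and is therefore very cheap, so the naive edge-budget inequality is far from tight. To force the structure one has to use that $\lambda(S)\ge n-\lceil k/2\rceil$ is required for \emph{every} $k$-set $S$, choosing $S$ so that the non-edges of $\overline G$ either fall inside $S$ (which shrinks $e(G[S])$ and hence caps $a$) or meet many prospective Steiner vertices (which breaks many stars). Turning this choice of $S$ into a rigorous bound --- and, on the sufficiency side, carrying out the simultaneous disjoint repair of all broken stars, which is really a small matching/Hall-type argument exploiting the density of $G[S]$ --- is where the actual work lies; the remaining estimates are of the same elementary kind as in the proof of Lemma~\ref{lem2}.
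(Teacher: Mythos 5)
The paper does not prove this lemma at all --- it is imported verbatim from \cite{LMS} --- so the only in-paper benchmark is the counting technique of Lemmas \ref{lem4} and \ref{lem5}. Measured against that, your plan has the right overall shape (sufficiency via a spanning-tree packing in $G[S]$ plus stars centred at the outside vertices; necessity via an edge budget over a well-chosen $k$-set $S$), and the reduction of the ``or'' to the $\lambda_k$ statement is legitimate. But both halves have genuine gaps, which you only partly acknowledge.

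For sufficiency, the repair step is not ``a small Hall-type argument'' that can be waved at: the edge repairing a star broken at the leaf $v$ must be an \emph{unused} edge of $G[S]$ \emph{incident to $v$}, and Theorem \ref{th1} gives no control whatsoever over which $(k-1)/2-|M_S|$ edges the packing leaves over. If, say, three edges of $M_1$ all point at the same $v\in S$, a packing whose trees happen to saturate $v$ leaves no spare edge at $v$ and the repair is impossible; so you need an explicit packing with prescribed leftovers (e.g.\ Hamiltonian paths from a Hamiltonian cycle decomposition of $K_k$, deleting from each cycle an edge at a prescribed vertex) and must make this compatible with $M_S\neq\varnothing$. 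None of that is carried out. For necessity, the per-tree inequality you propose --- each tree meeting $V(G)\setminus S$ ``uses at least two edges between $S$ and $V(G)\setminus S$'' --- is the wrong accounting: with $|E_G[S,\bar S]|$ of order $k(n-k)$ it bounds the number of such trees only by about $k(n-k)/2$, far above the $n-k$ you need, which is exactly the ``cheap star'' obstacle you name but do not overcome. The inequality that closes the argument is Lemma \ref{lem4}: every such tree uses at least $k$ edges of $E(G[S])\cup E_G[S,\bar S]$. With that, writing $a(k-1)+bk\le\binom{k}{2}+k(n-k)-m_S$, where $m_S$ counts the edges of $M$ with an endpoint in $S$, together with $a\le\lfloor(\binom{k}{2}-|M\cap E(K_n[S])|)/(k-1)\rfloor$ and the observation that any $\lceil(k+1)/2\rceil$ edges of $M$ span at most $k+1$ vertices (so some $k$-set meets them all), yields $\lambda(S)<n-\lceil k/2\rceil$ whenever $|M|$ exceeds the stated bound --- this is precisely the computation the paper performs in Lemma \ref{lem5} for the case $k=n-1$. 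As written, your proposal is a plan with the two decisive steps deferred rather than a proof.
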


Note that
$\kappa_n(G)=\lambda_n(G)=\overline{\kappa}_n(G)=\overline{\lambda}_n(G)$.
From the above lemma, we can derive the following corollary.

\begin{corollary}\label{cor1}
For a connected graph $G$ of order $n$,
$\kappa_n(G)=\overline{\kappa}_n(G)=\lambda_n(G)=\overline{\lambda}_n(G)=\lfloor\frac{n}{2}\rfloor$
if and only if $G=K_n$ for $n$ even; $G=K_n\setminus M$ for $n$ odd,
where $M$ is an edge set such that $0\leq |M|\leq \frac{n-1}{2}$.
\end{corollary}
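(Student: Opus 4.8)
The plan is to deduce this directly from Lemma \ref{lem3} by setting $k=n$, using only the elementary identity $\kappa_n(G)=\lambda_n(G)=\overline{\kappa}_n(G)=\overline{\lambda}_n(G)$ that is recorded just above the statement. The first step is to observe that when $k=n$ the only $k$-subset of $V(G)$ is $V(G)$ itself, so the ``$\min$'' and ``$\max$'' over $k$-sets in the definitions of $\kappa_k,\lambda_k,\overline{\kappa}_k,\overline{\lambda}_k$ are all taken over the single set $S=V(G)$; hence all four parameters coincide with the common value $\kappa(V(G))=\lambda(V(G))$ (here a Steiner tree connecting all of $V(G)$ is a spanning tree, so internally disjoint and edge-disjoint coincide). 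This justifies treating the four-way equality as a single quantity.

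Next I would specialize the formula $\lambda_k(K_n)=n-\lceil k/2\rceil$ from Lemma \ref{lem1} to $k=n$, giving the maximum possible value $n-\lceil n/2\rceil=\lfloor n/2\rfloor$; this is also consistent with Observation \ref{obs3} restricted to $k=n$. So the equation $\kappa_n(G)=\overline{\kappa}_n(G)=\lambda_n(G)=\overline{\lambda}_n(G)=\lfloor n/2\rfloor$ is exactly the statement that $G$ attains the extremal value $\lambda_n(G)=n-\lceil n/2\rceil$ in Lemma \ref{lem3} with $k=n$.

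Finally I would invoke Lemma \ref{lem3} with $k=n$ directly: it asserts that $\lambda_k(G)=n-\lceil k/2\rceil$ (equivalently $\kappa_k(G)=n-\lceil k/2\rceil$) holds if and only if $G=K_n$ when $k$ is even, and $G=K_n\setminus M$ with $0\le |M|\le \frac{k-1}{2}$ when $k$ is odd. Substituting $k=n$ turns the parity condition on $k$ into a parity condition on $n$ and turns the bound $\frac{k-1}{2}$ into $\frac{n-1}{2}$, which is precisely the claimed characterization. I do not anticipate a genuine obstacle here; the only point requiring a line of care is the very first one—checking that for $k=n$ all four parameters literally reduce to the spanning-tree packing number of $G$, so that Lemma \ref{lem3} (stated for $\kappa_k$ and $\lambda_k$) transfers verbatim to $\overline{\kappa}_n$ and $\overline{\lambda}_n$ as well. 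Everything else is a substitution.
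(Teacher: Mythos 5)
Your proposal is correct and matches the paper's own derivation: the paper likewise obtains Corollary \ref{cor1} by specializing Lemma \ref{lem3} to $k=n$, after noting that $\kappa_n(G)=\lambda_n(G)=\overline{\kappa}_n(G)=\overline{\lambda}_n(G)$ because $V(G)$ is the only $n$-subset and a Steiner tree for $S=V(G)$ is a spanning tree. Your extra remark verifying $n-\lceil n/2\rceil=\lfloor n/2\rfloor$ and that internal disjointness reduces to edge-disjointness for spanning trees is exactly the (unwritten) content the paper relies on.
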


Let $\mathcal {G}_n$ be a graph class obtained from a complete graph
$K_{n-1}$ by adding a vertex $v$ and joining $v$ to $\ell$ vertices
of $K_{n-1}$.

\begin{theorem}\label{th2}
Let $G$ be a connected graph of order $n \ (n\geq 6)$. If
$\overline{\lambda}_n(G)\leq \ell \ (1\leq \ell\leq
\lfloor\frac{n}{2}\rfloor)$, then

$$
e(G)\leq \left\{
\begin{array}{ll}
{{n-1}\choose{2}}+\ell, &if~1\leq \ell\leq
\lfloor\frac{n-4}{2}\rfloor;\\
{{n-1}\choose{2}}+n-2,&if~\ell=
\lfloor\frac{n-2}{2}\rfloor~and~$n$~is~even;\\
{{n-1}\choose{2}}+\frac{n-3}{2},&if~\ell=
\lfloor\frac{n-2}{2}\rfloor~and~$n$~is~odd;\\
{{n}\choose{2}},&if~\ell=\lfloor\frac{n}{2}\rfloor.
\end{array}
\right.
$$
with equality if and only if $G\in \mathcal{G}_n$ for $1\leq
\ell\leq \lfloor\frac{n-4}{2}\rfloor$; $G=K_n\setminus e$ where
$e\in E(K_n)$ for $\ell=\lfloor\frac{n-2}{2}\rfloor$ and $n$ even;
$G=K_n\setminus M$ where $M\subseteq E(K_n)$ and $|M|=\frac{n+1}{2}$
for $\ell=\lfloor\frac{n-2}{2}\rfloor$ and $n$ odd; $G=K_n$ for
$\ell=\lfloor\frac{n}{2}\rfloor$.
\end{theorem}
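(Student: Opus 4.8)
The plan is to split into the four regimes of $\ell$ and, in each, obtain the upper bound on $e(G)$ from the constraint $\overline{\lambda}_n(G)\le\ell$, then identify the extremal graphs. The key equivalence to exploit is $\overline{\lambda}_n(G)=\lambda_n(G)$ (noted before Corollary~\ref{cor1}), so that $\overline{\lambda}_n(G)\le\ell$ is exactly the statement that $G$ does \emph{not} contain $\ell+1$ edge-disjoint spanning trees. By Theorem~\ref{th1} (Nash--Williams--Tutte), this means there is a partition $\mathscr{P}$ of $V(G)$ with $\|G/\mathscr{P}\|\le(\ell+1)(|\mathscr{P}|-1)-1$. The whole argument is then a matter of bounding $e(G)$ given the existence of such a ``deficient'' partition.

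First I would dispose of the top two regimes. For $\ell=\lfloor n/2\rfloor$ the bound $e(G)\le\binom{n}{2}$ is trivial and equality forces $G=K_n$. For $\ell=\lfloor(n-2)/2\rfloor$ I would invoke Corollary~\ref{cor1}: here $\ell+1=\lfloor n/2\rfloor$, so $\overline{\lambda}_n(G)\le\ell$ is equivalent to $\overline{\lambda}_n(G)\ne\lfloor n/2\rfloor$, i.e. $G$ is \emph{not} one of the graphs listed in Corollary~\ref{cor1}. When $n$ is even this means $G\ne K_n$ and $G$ is not $K_n$ with a matching removed of the allowed size — but actually for $n$ even Corollary~\ref{cor1} only allows $M=\varnothing$, so $\overline{\lambda}_n(G)\le\ell$ iff $G\ne K_n$, giving $e(G)\le\binom{n}{2}-1$ with equality iff $G=K_n\setminus e$. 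When $n$ is odd, $G$ achieves $\lfloor n/2\rfloor$ iff $G=K_n\setminus M$ with $|M|\le\frac{n-1}{2}$; so $\overline{\lambda}_n(G)\le\ell$ forces either $|M|\ge\frac{n+1}{2}$ (if $G$ is $K_n$ minus a matching-like set — but one must be careful, $M$ here is an arbitrary edge set, not a matching) or $G$ not of that form at all. The clean way is: $e(G)=\binom{n}{2}-|E(K_n)\setminus E(G)|$, and I claim $|E(K_n)\setminus E(G)|\ge\frac{n+1}{2}$; this needs a short argument that any $G$ on $n$ odd vertices with $\binom{n}{2}-\frac{n-1}{2}$ or more edges has $\lfloor n/2\rfloor$ edge-disjoint spanning trees (again via Theorem~\ref{th1}, checking the partition inequality, exactly in the style of Lemma~\ref{lem2}), with equality analysis pinning down $G=K_n\setminus M$, $|M|=\frac{n+1}{2}$.

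The main regime, $1\le\ell\le\lfloor(n-4)/2\rfloor$, is where Lemma~\ref{lem2} does the heavy lifting, but it needs a contrapositive-style setup because Lemma~\ref{lem2} assumes $\delta(G)\ge\ell+1$. Suppose $e(G)\ge\binom{n-1}{2}+\ell+1$. If $\delta(G)\ge\ell+1$, Lemma~\ref{lem2} (applied with $\ell+1$ edges in excess of $\binom{n-1}{2}$, which still satisfies its hypothesis) gives $\ell+1$ edge-disjoint spanning trees, contradicting $\overline{\lambda}_n(G)\le\ell$. If instead some vertex $v$ has $\deg(v)\le\ell$, delete it: $G-v$ has $n-1$ vertices and at least $\binom{n-1}{2}+1$ edges — impossible, since $\binom{n-1}{2}$ is the maximum. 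Hence $e(G)\le\binom{n-1}{2}+\ell$. For the equality case, if $e(G)=\binom{n-1}{2}+\ell$ then the same dichotomy shows $\delta(G)\le\ell$ is forced (else Lemma~\ref{lem2} applies directly and gives a contradiction), so there is a vertex $v$ of degree $\le\ell$; then $G-v$ has $\ge\binom{n-1}{2}$ edges on $n-1$ vertices, hence $G-v=K_{n-1}$ and $\deg(v)=\ell$ exactly, i.e. $G\in\mathcal{G}_n$. Conversely one checks $\overline{\lambda}_n(G)\le\ell$ for $G\in\mathcal{G}_n$: the pendant-ish vertex $v$ of degree $\ell$ lies in at most $\ell$ edge-disjoint spanning trees (each needs an edge at $v$), so $\lambda_n(G)\le\ell$, and $\overline{\lambda}_n=\lambda_n$.

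The step I expect to be the genuine obstacle is the equality analysis in the odd-$n$ middle regime $\ell=\frac{n-3}{2}$: the target bound $\binom{n-1}{2}+\frac{n-3}{2}=\binom{n}{2}-\frac{n+1}{2}$ must be matched against \emph{all} of $K_n\setminus M$ with $|M|=\frac{n+1}{2}$ (not just matchings), so I must verify that \emph{every} such deletion set leaves $\overline{\lambda}_n=\lfloor n/2\rfloor$ unattained while any set of size $\le\frac{n-1}{2}$ does attain it — i.e. that the threshold in Corollary~\ref{cor1} is tight even when $M$ is an arbitrary edge set and not a matching. The cleanest route is a direct Nash--Williams--Tutte check: show that for $n$ odd, $e(G)\ge\binom{n}{2}-\frac{n-1}{2}$ implies $\|G/\mathscr{P}\|\ge\lfloor n/2\rfloor(|\mathscr{P}|-1)$ for every partition $\mathscr{P}$, mirroring the block-size optimization inside Lemma~\ref{lem2}, and separately exhibit that some $K_n\setminus M$ with $|M|=\frac{n+1}{2}$ (e.g. $M$ a near-perfect matching missing one extra edge, or a small star) indeed fails the inequality for the partition into singletons — this last verification is the fiddly endpoint computation.
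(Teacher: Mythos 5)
Your proposal follows essentially the same route as the paper: the main regime $1\le\ell\le\lfloor\frac{n-4}{2}\rfloor$ is handled by the dichotomy on $\delta(G)$ together with Lemma~\ref{lem2}, the regime $\ell=\lfloor\frac{n-2}{2}\rfloor$ by Corollary~\ref{cor1}, and the top case trivially. The one worry you flag — that Corollary~\ref{cor1} might only cover matchings — is unfounded, since the $M$ there is already an arbitrary edge set, so no separate Nash--Williams--Tutte verification is needed for the odd-$n$ middle regime.
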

\begin{proof}
For $1\leq \ell\leq \lfloor\frac{n-4}{2}\rfloor$, if $e(G)\geq
{{n-1}\choose{2}}+(\ell+1)$, then $\delta(G)\geq \ell+1$. From Lemma
\ref{lem2}, $\overline{\lambda}_n(G)\geq \ell+1$, which contradicts
to $\overline{\lambda}_n(G)\leq \ell$. So $e(G)\leq
{{n-1}\choose{2}}+\ell$ for $1\leq \ell\leq
\lfloor\frac{n-4}{2}\rfloor$. For $\ell=\lfloor\frac{n-2}{2}\rfloor$
and $n$ even, $e(G)\leq {{n-1}\choose{2}}+n-2$ by Corollary
\ref{cor1}. By the same reason, $e(G)\leq
{{n-1}\choose{2}}+\frac{n-3}{2}$ for
$\ell=\lfloor\frac{n-2}{2}\rfloor$ and $n$ odd. If
$\ell=\lfloor\frac{n}{2}\rfloor$, then for any connected graph $G$
$\overline{\lambda}_k(G)\leq \ell$ by Observation \ref{obs3}. So
$e(G)\leq {{n}\choose{2}}$.

Now we characterize the graphs attaining the upper bounds. Consider
the case $1\leq \ell\leq \lfloor\frac{n-4}{2}\rfloor$. Suppose that
$G$ is a connected graph such that $e(G)={{n-1}\choose{2}}+\ell$.
Clearly, $\delta(G)\geq \ell$. Assume $\delta(G)\geq \ell+1$. Since
$e(G)={{n-1}\choose{2}}+\ell$, $G$ contains $\ell+1$ edge-disjoint
spanning trees by Lemma \ref{lem2}, namely,
$\overline{\lambda}_n(G)\geq \ell+1$, a contradiction. So
$\delta(G)=\ell$, and hence there exists a vertex $v$ such that
$d_G(v)=\ell$. Clearly, $e(G-v)={{n-1}\choose{2}}$. Thus $G-v$ is a
clique of order $n-1$.  Therefore, $G\in \mathcal{G}_n$. For $n$
even and $\ell=\lfloor\frac{n-2}{2}\rfloor$, let
$e(G)={{n-1}\choose{2}}+n-2$. Obviously, $G=K_n\setminus e$, where
$e\in E(K_n)$. For $n$ odd and $\ell=\lfloor\frac{n-2}{2}\rfloor$,
let $e(G)={{n-1}\choose{2}}+\frac{n-3}{2}$. Clearly, $G=K_n\setminus
M$, where $M\subseteq E(K_n)$ and $|M|=\frac{n+1}{2}$. For
$\ell=\lfloor\frac{n}{2}\rfloor$, if $e(G)={{n}\choose{2}}$, then
$G=K_n$.
\end{proof}

\begin{corollary}
For $1\leq \ell\leq \lfloor\frac{n}{2}\rfloor$ and $n\geq 6$,

$$
f(n;\overline{\kappa}_n\leq \ell)=g(n;\overline{\lambda}_n\leq
\ell)=\left\{
\begin{array}{ll}
{{n-1}\choose{2}}+\ell, &if~1\leq \ell\leq
\lfloor\frac{n-4}{2}\rfloor~or~\ell=
\lfloor\frac{n-2}{2}\rfloor~and~$n$~is~odd;\\
{{n-1}\choose{2}}+2\ell,&if~\ell=
\lfloor\frac{n-2}{2}\rfloor~and~$n$~is~even;\\
{{n}\choose{2}},&if~\ell=\lfloor\frac{n}{2}\rfloor.
\end{array}
\right.
$$
\end{corollary}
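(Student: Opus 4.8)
The plan is to derive this Corollary directly from Theorem~\ref{th2} by converting the edge-count upper bounds into the two extremal parameters $f(n;\overline{\kappa}_n\leq\ell)$ and $g(n;\overline{\lambda}_n\leq\ell)$. The starting observation is the identity noted before Corollary~\ref{cor1}, namely $\kappa_n(G)=\lambda_n(G)=\overline{\kappa}_n(G)=\overline{\lambda}_n(G)$ for every connected graph $G$ of order $n$; consequently the classes of graphs $G$ of order $n$ with $\overline{\kappa}_n(G)\leq\ell$ and with $\overline{\lambda}_n(G)\leq\ell$ coincide, so $f(n;\overline{\kappa}_n\leq\ell)=g(n;\overline{\lambda}_n\leq\ell)$ immediately, and it suffices to evaluate one of them. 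By definition this common value is the maximum of $e(G)$ over all connected $G$ with $|V(G)|=n$ and $\overline{\lambda}_n(G)\leq\ell$, which is exactly the quantity bounded above in Theorem~\ref{th2}.

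Next I would match each case of Theorem~\ref{th2} with the corresponding case of the Corollary. For $1\leq\ell\leq\lfloor\frac{n-4}{2}\rfloor$, Theorem~\ref{th2} gives $e(G)\leq\binom{n-1}{2}+\ell$ with equality attained by every $G\in\mathcal{G}_n$; since $\mathcal{G}_n$ is nonempty and each such graph is connected of order $n$ with $\overline{\lambda}_n\leq\ell$, the bound is achieved, so $f(n;\overline{\kappa}_n\leq\ell)=\binom{n-1}{2}+\ell$. For $\ell=\lfloor\frac{n-2}{2}\rfloor$ with $n$ even, we have $\ell=\frac{n-2}{2}$ so $2\ell=n-2$, and Theorem~\ref{th2} gives $e(G)\leq\binom{n-1}{2}+n-2=\binom{n-1}{2}+2\ell$, attained by $K_n\setminus e$; this gives the second line. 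For $\ell=\lfloor\frac{n-2}{2}\rfloor$ with $n$ odd, $\ell=\frac{n-3}{2}$ so the bound $\binom{n-1}{2}+\frac{n-3}{2}=\binom{n-1}{2}+\ell$ matches the first line of the Corollary, attained by $K_n\setminus M$ with $|M|=\frac{n+1}{2}$. Finally, for $\ell=\lfloor\frac{n}{2}\rfloor$, every connected $G$ of order $n$ satisfies $\overline{\lambda}_n(G)\leq\ell$ by Observation~\ref{obs3}, so the maximum is attained by $K_n$ and equals $\binom{n}{2}$, giving the third line.

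I do not anticipate a genuine obstacle here: the Corollary is a bookkeeping reformulation of Theorem~\ref{th2}, and the only points requiring care are (i) confirming that the extremal graphs listed in Theorem~\ref{th2} are indeed connected and attain $\overline{\lambda}_n\leq\ell$ (so that the upper bounds are tight and the stated maxima are realized), and (ii) verifying the arithmetic identities $2\ell=n-2$ when $n$ is even and $\ell=\lfloor\frac{n-2}{2}\rfloor$, and $\frac{n-3}{2}=\ell$ when $n$ is odd and $\ell=\lfloor\frac{n-2}{2}\rfloor$, so that the two displays agree case by case. Point (i) is handled by the ``equality if and only if'' clause of Theorem~\ref{th2}, which already exhibits the extremal graphs; point (ii) is routine. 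Thus the proof reduces to citing the equality $\overline{\kappa}_n=\overline{\lambda}_n=\lambda_n=\kappa_n$, invoking Theorem~\ref{th2} for the four ranges of $\ell$, and rewriting $n-2$ as $2\ell$ and $\frac{n-3}{2}$ as $\ell$ in the relevant subcases.
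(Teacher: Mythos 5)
Your proposal is correct and follows exactly the route the paper intends: the corollary is stated without proof as an immediate consequence of Theorem~\ref{th2}, and your case-matching (using $\overline{\kappa}_n=\overline{\lambda}_n$, the arithmetic $2\ell=n-2$ for $n$ even and $\ell=\frac{n-3}{2}$ for $n$ odd, and the extremal graphs $\mathcal{G}_n$, $K_n\setminus e$, $K_n\setminus M$, $K_n$ to witness attainment) is precisely the bookkeeping the authors leave implicit. The only point worth stating a touch more explicitly is that attainment requires checking the listed extremal graphs do satisfy $\overline{\lambda}_n\leq\ell$ (e.g.\ via the degree-$\ell$ vertex in $\mathcal{G}_n$ or Corollary~\ref{cor1}), which you flag and which is routine.
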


\section{The case $k=n-1$}

Before giving our main results, we need some preparations. From
Observation \ref{obs4}, we know that $1\leq
\overline{\kappa}_{n-1}(G)\leq \lfloor\frac{n+1}{2}\rfloor$. So we
only need to consider $1\leq \ell\leq \lfloor\frac{n+1}{2}\rfloor$.
In order to determine the exact value of
$f(n;\overline{\kappa}_{n-1}\leq \ell)$ for a general $\ell \ (1\leq
\ell\leq \lfloor\frac{n+1}{2}\rfloor)$, we first focus on the cases
$\ell=\lfloor\frac{n+1}{2}\rfloor$ and
$\lfloor\frac{n-1}{2}\rfloor$. This is also because by
characterizing the graphs with
$\overline{\kappa}_{n-1}(G)=\lfloor\frac{n+1}{2}\rfloor$ and $
\lfloor\frac{n-1}{2}\rfloor$, we can deal with the difficult case
$\ell=\lfloor\frac{n-3}{2}\rfloor$.

\subsection{The subcases $\ell=\lfloor\frac{n+1}{2}\rfloor$ and $\ell=\lfloor\frac{n-1}{2}\rfloor$}

Let us begin this subsection with a useful lemma in \cite{LMS}.

Let $S\subseteq V(G)$ such that $|S|=k$, and $\mathscr{T}$ be a
maximum set of edge-disjoint trees in $G$ connecting $S$. Let
$\mathscr{T}_1$ be the set of trees in $\mathscr{T}$ whose edges
belong to $E(G[S])$, and $\mathscr{T}_2$ be the set of trees
containing at least one edge of $E_G[S,\bar{S}]$. Thus,
$\mathscr{T}=\mathscr{T}_1\cup \mathscr{T}_2$ (Throughout this
paper, $\mathscr{T}$, $\mathscr{T}_1$, $\mathscr{T}_2$ are defined
in this way).

\begin{lemma}\cite{LMS} \label{lem4}
Let $S\subseteq V(G)$, $|S|=k$ and $T$ be a tree connecting $S$. If
$T\in \mathscr{T}_1$, then $T$ uses $k-1$ edges of $E(G[S])\cup
E_G[S,\bar{S}]$; If $T\in \mathscr{T}_2$, then $T$ uses $k$ edges of
$E(G[S])\cup E_G[S,\bar{S}]$.
\end{lemma}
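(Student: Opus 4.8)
The plan is to argue directly from the tree structure of $T$ and from the way $V(T)$ splits between $S$ and $\bar{S}$; the maximality of $\mathscr{T}$ plays no role in the proof itself. First I would dispose of the case $T\in\mathscr{T}_1$: here every edge of $T$ lies in $E(G[S])$, and since $T$ is connected and has at least two vertices (it connects the $k\ge 2$ vertices of $S$), every vertex of $T$ is incident with an edge and hence lies in $S$. Thus $V(T)=S$, so $T$ is a tree on exactly $k$ vertices with precisely $k-1$ edges, all of them in $E(G[S])\subseteq E(G[S])\cup E_G[S,\bar{S}]$; this is the first assertion.

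For $T\in\mathscr{T}_2$, put $W=V(T)\setminus S$ and $m=|W|$. Since $T$ contains an edge of $E_G[S,\bar{S}]$, that edge has an endpoint outside $S$, so $m\ge 1$. I would split $E(T)$ according to the location of endpoints: let $a$, $b$, $c$ be the numbers of edges of $T$ with, respectively, both endpoints in $S$, one endpoint in $S$ and one in $W$, and both endpoints in $W$. Because $V(T)=S\cup W$ these three classes exhaust $E(T)$, so $a+b+c=|E(T)|=|V(T)|-1=k+m-1$, and the edges counted by $a$ and $b$ are exactly those edges of $T$ lying in $E(G[S])\cup E_G[S,\bar{S}]$. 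The key point is that the $c$ edges with both endpoints in $W$ span a subgraph of the tree $T$ on the $m$-element set $W$, hence form a forest; as $m\ge 1$ this forest has at least one component, so $c\le m-1$. Therefore the number of edges of $T$ in $E(G[S])\cup E_G[S,\bar{S}]$ equals $a+b=(k+m-1)-c\ge(k+m-1)-(m-1)=k$, with equality exactly when $W$ induces a connected subtree of $T$. This gives the second assertion.

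There is no real obstacle: everything reduces to the elementary facts that a tree has one fewer edge than vertices and that a forest on $m$ vertices has at most $m-1$ edges. The only things needing care are, in the $\mathscr{T}_1$ case, noting that ``all edges inside $S$'' forces $V(T)=S$ rather than merely $S\subseteq V(T)$, and, in the $\mathscr{T}_2$ case, recording that $W\neq\varnothing$ so that the bound $c\le m-1$ applies. In the later applications one takes $\mathscr{T}$ to be a maximum family of edge-disjoint $S$-trees and then sums these per-tree edge counts against $|E(G[S])|$ and $|E(G[S])|+|E_G[S,\bar{S}]|$; it is precisely the lower bound $a+b\ge k$ for the trees of $\mathscr{T}_2$ (together with the exact value $k-1$ for those of $\mathscr{T}_1$) that drives those estimates.
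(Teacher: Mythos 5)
Your proof is correct. The paper itself does not reproduce a proof of this lemma (it is cited from \cite{LMS}), so there is nothing internal to compare against, but your argument is the natural one: a tree in $\mathscr{T}_1$ has vertex set exactly $S$ and hence exactly $k-1$ edges, and for a tree in $\mathscr{T}_2$ the identity $a+b+c=k+m-1$ together with the forest bound $c\le m-1$ (valid since $W\neq\varnothing$) gives $a+b\ge k$. One small point worth recording: the lemma's phrase ``uses $k$ edges'' must be read as ``uses \emph{at least} $k$ edges,'' which is exactly the inequality you prove and exactly what is needed in the applications (e.g.\ the bound $(n-2)|\mathscr{T}_1|+(n-1)|\mathscr{T}_2|\le |E(G[S])\cup E_G[S,\bar S]|$ in Lemmas \ref{lem5} and \ref{lem6}); your characterization of the equality case ($W$ inducing a subtree of $T$) is a correct bonus not claimed in the statement.
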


The following results can be derived from Lemma \ref{lem4}.

\begin{lemma}\label{lem5}
Let $G=K_n\setminus M$ be a connected graph of order $n \ (n\geq
4)$, where $M\subseteq E(K_n)$.

$(1)$ If $n$ is odd and $|M|\geq 1$, then
$\overline{\lambda}_{n-1}(G)<\frac{n+1}{2}$;

$(2)$ If $n$ is even and $|M|\geq \frac{n}{2}$, then
$\overline{\lambda}_{n-1}(G)<\frac{n}{2}$.
\end{lemma}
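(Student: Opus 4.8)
The plan is to argue by contradiction using the counting machinery of Lemma~\ref{lem4}. Suppose $G=K_n\setminus M$ with $S\subseteq V(G)$, $|S|=n-1$, and let $\bar S=\{w\}$ be the unique vertex outside $S$. Write $\mathscr{T}=\mathscr{T}_1\cup\mathscr{T}_2$ for a maximum set of edge-disjoint $S$-trees. By Lemma~\ref{lem4}, each tree in $\mathscr{T}_1$ consumes $n-2$ edges from $E(G[S])\cup E_G[S,\bar S]$, and each tree in $\mathscr{T}_2$ consumes $n-1$ such edges; moreover every tree in $\mathscr{T}_2$ uses at least one (in fact, since $w$ has degree at most $2$ in a tree, at least two) edges incident with $w$. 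First I would record the edge budget: $|E(G[S])|\le\binom{n-1}{2}-|M\cap E(G[S])|$ and $|E_G[S,\bar S]|=d_G(w)\le n-1$, with the total number of available edges being $\binom{n-1}{2}+d_G(w)-|M\cap E(G[S])|$.

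Next I would split into the two cases. For $(2)$, $n$ even and $|M|\ge n/2$: assume for contradiction $\overline{\lambda}_{n-1}(G)=n/2$, so $|\mathscr{T}|\ge n/2$. Since each of the $|\mathscr{T}_2|$ trees needs at least two edges at $w$ and the edges at $w$ are disjoint across trees, $2|\mathscr{T}_2|\le d_G(w)\le n-1$, giving $|\mathscr{T}_2|\le(n-1)/2$, hence $|\mathscr{T}_1|\ge 1$. Now count total edges consumed: $(n-2)|\mathscr{T}_1|+(n-1)|\mathscr{T}_2|$ must not exceed $|E(G[S])|+d_G(w)$. Using $|\mathscr{T}_1|+|\mathscr{T}_2|\ge n/2$ and the bound on $|\mathscr{T}_2|$, together with $|M|\ge n/2$ split between $E(G[S])$ and the edges missing at $w$ (a missing edge at $w$ lowers $d_G(w)$, a missing edge inside $S$ lowers $|E(G[S])|$), I would derive a numerical contradiction — the key inequality should come out to something like $\binom{n-1}{2}+ (n-1) - |M| < (n-2)\cdot 1 + (n-1)\cdot\frac{n}{2}$ once one checks that $\mathscr T_1$ is forced to be small as well (each $\mathscr T_1$-tree is a spanning tree of $G[S]$, needing $n-2$ edges inside $S$). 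Case $(1)$, $n$ odd and $|M|\ge 1$: assume $\overline{\lambda}_{n-1}(G)=(n+1)/2$, so $|\mathscr{T}|\ge(n+1)/2$; the same $2|\mathscr T_2|\le d_G(w)\le n-1$ forces $|\mathscr T_2|\le (n-1)/2$, hence $|\mathscr T_1|\ge 1$, and the single missing edge of $M$ drops either $|E(G[S])|$ or $d_G(w)$ by one, which I expect to be exactly enough to violate the edge-budget inequality $(n-2)|\mathscr T_1|+(n-1)|\mathscr T_2|\le |E(G[S])|+d_G(w)$.

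The main obstacle I anticipate is bookkeeping the interaction between where the missing edges of $M$ sit (inside $S$ versus incident to $w$) and the constraint $2|\mathscr T_2|\le d_G(w)$: lowering $d_G(w)$ simultaneously relaxes the edge budget on the right and tightens the cap on $|\mathscr T_2|$, so the two cases (all of $M$ at $w$, all of $M$ inside $S$, or mixed) must each be checked, and I would reduce this to a single extremal configuration by observing the estimate is worst when $M$ is placed inside $S$ (since then $d_G(w)=n-1$ is as large as possible, maximizing the achievable $|\mathscr T_2|$). A secondary subtlety is justifying that $w$ uses at least two edges in every tree of $\mathscr T_2$ — this holds because $w\notin S$ is not a leaf-or-isolated obstruction only if it has degree $\ge 2$ in the tree, but a tree in $\mathscr T_2$ containing $w$ could a priori have $w$ as a leaf; I would handle this by noting that if $w$ is a leaf of such a tree $T$, deleting $w$ yields a smaller $S$-tree, so without loss of generality no tree in $\mathscr T_2$ has $w$ as a leaf, restoring $2|\mathscr T_2|\le d_G(w)$. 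Once these placements are pinned down, the remaining inequalities are the routine calculations I would carry out to close each case.
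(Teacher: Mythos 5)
Your plan is essentially the paper's proof run as a contradiction: both arguments rest on exactly two facts, namely the Lemma~\ref{lem4} budget $(n-2)|\mathscr{T}_1|+(n-1)|\mathscr{T}_2|\le e(G)={n\choose 2}-|M|$ and the bound $|\mathscr{T}_1|\le\big\lfloor{{n-1}\choose{2}}/(n-2)\big\rfloor$ coming from the fact that the $\mathscr{T}_1$-trees are edge-disjoint spanning trees of $G[S]$ --- a bound you name only parenthetically at the end, but which is the step that actually closes both cases (it caps $x=|\mathscr{T}_1|$ from above, which is what the inequality $(n-1)y-x\le{n\choose 2}-|M|$ needs). The machinery you spend most of your effort on --- the degree of $w$ in $\mathscr{T}_2$-trees, the bound $2|\mathscr{T}_2|\le d_G(w)$ (which bounds $x$ from \emph{below} and so points the wrong way), and the case analysis on whether $M$ lies inside $S$ or at $w$ --- is unnecessary, since neither the total budget ${n\choose 2}-|M|$ nor the bound $|E(G[S])|\le{{n-1}\choose{2}}$ depends on where $M$ sits; it is harmless but should be discarded, and your displayed ``key inequality'' (which tacitly uses $n/2+1$ trees when only $n/2$ are assumed) replaced by the direct computation.
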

\begin{proof}
$(1)$ For any $S\subseteq V(G)$ such that $|S|=n-1$, obviously,
$|\bar{S}|=1$ and $e\in E(G[S])\cup E_G[S,\bar{S}]$. Let
$|\mathscr{T}_1|=x$ and $|\mathscr{T}|=y$. Then
$|\mathscr{T}_2|=y-x$. Clearly, $|\mathscr{T}_1|\leq
\lfloor\frac{{{n-1}\choose{2}}}{n-2}\rfloor=\frac{n-1}{2}$. Since
$(n-2)|\mathscr{T}_1|+(n-1)|\mathscr{T}_2|\leq |E(G[S])\cup
E_G[S,\bar{S}]|$, it follows that $(n-2)x+(n-1)(y-x)\leq
{{n}\choose{2}}-1$. Then $\lambda(S)=|\mathscr{T}|=y\leq
\frac{x}{n-1}+\frac{n}{2}-\frac{1}{n-1}\leq
\frac{n+1}{2}-\frac{1}{n-1}<\frac{n+1}{2}$. So
$\overline{\lambda}_{n-1}(G)<\frac{n+1}{2}$.

$(2)$ In this case, for any $S\subseteq V(G)$ such that $|S|=n-1$,
we have $|\bar{S}|=1$ and $e\in E(G[S])\cup E_G[S,\bar{S}]$. Let
$|\mathscr{T}_1|=x$ and $|\mathscr{T}|=y$. Then
$|\mathscr{T}_2|=y-x$. Clearly, $|\mathscr{T}_1|\leq
\lfloor\frac{{{n-1}\choose{2}}}{n-2}\rfloor=\lfloor\frac{n-1}{2}\rfloor
=\frac{n-2}{2}$. Since
$(n-2)|\mathscr{T}_1|+(n-1)|\mathscr{T}_2|\leq |E(G[S])\cup
E_G[S,\bar{S}]|$, it follows that $(n-2)x+(n-1)(y-x)\leq
{{n}\choose{2}}-\frac{n}{2}$. Then $\lambda(S)=|\mathscr{T}|=y\leq
\frac{x}{n-1}+\frac{n}{2}-\frac{n}{2(n-1)}\leq
\frac{n}{2}-\frac{1}{n-1}<\frac{n}{2}$. So
$\overline{\lambda}_{n-1}(G)<\frac{n}{2}$.
\end{proof}

With the help of Lemmas \ref{lem3} and \ref{lem5} and Observation
\ref{obs1}, the graphs with
$\overline{\kappa}_{n-1}(G)=\lfloor\frac{n+1}{2}\rfloor$ can be
characterized now.

\begin{proposition}\label{pro1}
For a connected graph $G$ of order $n \ (n\geq 4)$,
$\overline{\kappa}_{n-1}(G)=\lfloor\frac{n+1}{2}\rfloor$ if and only
if $G=K_n$ for $n$ odd; $G=K_n\setminus M$ for $n$ even, where $M$
is an edge set such that $0\leq |M|\leq \frac{n-2}{2}$.
\end{proposition}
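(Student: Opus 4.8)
The statement is an ``if and only if,'' so there are two directions. For the easy direction, I would start from the classification in Lemma~\ref{lem3} applied with $k=n-1$: a connected graph $G$ of order $n$ satisfies $\lambda_{n-1}(G)=n-\lceil (n-1)/2\rceil=\lfloor (n+1)/2\rfloor$ (equivalently $\kappa_{n-1}(G)$ equals this value) precisely when $G=K_n$ for $n-1$ even, i.e. $n$ odd, and when $G=K_n\setminus M$ with $0\le |M|\le \frac{k-1}{2}=\frac{n-2}{2}$ for $n-1$ odd, i.e. $n$ even. Since $\overline{\kappa}_{n-1}(G)\ge\kappa_{n-1}(G)$ by Observation~\ref{obs1}(2), any such $G$ automatically has $\overline{\kappa}_{n-1}(G)\ge\lfloor (n+1)/2\rfloor$, and the reverse inequality $\overline{\kappa}_{n-1}(G)\le\lfloor (n+1)/2\rfloor$ holds for every connected graph by Observation~\ref{obs4}. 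This disposes of the ``if'' direction.

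**The hard direction.** Now suppose $G$ is connected of order $n$ with $\overline{\kappa}_{n-1}(G)=\lfloor (n+1)/2\rfloor$; I must show $G$ has the stated form. Split on the parity of $n$. If $n$ is odd: I claim $G=K_n$. Suppose not, so $G=K_n\setminus M$ with $|M|\ge 1$ (every graph on $n$ vertices is $K_n$ minus some edge set; connectivity is automatic here since we will see $G$ is dense). By Observation~\ref{obs1}(1), $\overline{\kappa}_{n-1}(G)\le\overline{\lambda}_{n-1}(G)$, and by Lemma~\ref{lem5}(1), $\overline{\lambda}_{n-1}(G)<\frac{n+1}{2}$; since $\lfloor(n+1)/2\rfloor=\frac{n+1}{2}$ for $n$ odd, this contradicts $\overline{\kappa}_{n-1}(G)=\frac{n+1}{2}$. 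Hence $M=\varnothing$ and $G=K_n$. If $n$ is even: write $G=K_n\setminus M$. If $|M|\ge\frac{n}{2}$, then by Observation~\ref{obs1}(1) and Lemma~\ref{lem5}(2) we get $\overline{\kappa}_{n-1}(G)\le\overline{\lambda}_{n-1}(G)<\frac{n}{2}=\lfloor (n+1)/2\rfloor$, a contradiction. So $|M|\le\frac{n}{2}-1=\frac{n-2}{2}$, which is exactly the claimed bound. (One should also record that $G$ is indeed connected in this regime, which is immediate as $\delta(G)\ge n-1-|M|\ge n-1-\frac{n-2}{2}=\frac{n}{2}\ge 2$.)

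**The remaining check.** In the $n$ even case I still owe the \emph{converse inclusion} check that every $G=K_n\setminus M$ with $|M|\le\frac{n-2}{2}$ actually attains $\overline{\kappa}_{n-1}(G)=\frac{n}{2}$ — but this is already covered by the ``if'' direction via Lemma~\ref{lem3} as noted above, since $\frac{n-2}{2}=\frac{k-1}{2}$ with $k=n-1$. So no separate argument is needed. The logical skeleton is therefore: Lemma~\ref{lem3} $\Rightarrow$ one direction; Observations~\ref{obs1} and~\ref{obs4} together with Lemma~\ref{lem5} $\Rightarrow$ the other. The only place requiring care is matching the bound $\frac{k-1}{2}$ from Lemma~\ref{lem3} with $\frac{n-2}{2}$ and verifying the parity arithmetic $\lfloor(n+1)/2\rfloor=\frac{n+1}{2}$ ($n$ odd) versus $\frac{n}{2}$ ($n$ even); I expect the main (minor) obstacle is simply ensuring the threshold in Lemma~\ref{lem5}(2), namely $|M|\ge\frac{n}{2}$, lines up exactly with the complement of the asserted range $|M|\le\frac{n-2}{2}$, which it does since $n$ is even.
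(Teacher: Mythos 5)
Your proposal is correct and follows essentially the same route as the paper: Lemma~\ref{lem3} (with $k=n-1$) together with Observations~\ref{obs1} and~\ref{obs4} gives the ``if'' direction, and Lemma~\ref{lem5} combined with $\overline{\kappa}_{n-1}\le\overline{\lambda}_{n-1}$ rules out $|M|\ge 1$ ($n$ odd) and $|M|\ge n/2$ ($n$ even) for the ``only if'' direction. The parity bookkeeping matching $\frac{k-1}{2}$ with $\frac{n-2}{2}$ is exactly as in the paper's argument.
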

\begin{proof}
Consider the case $n$ odd. Suppose that $G$ is a connected graph
such that $\overline{\kappa}_{n-1}(G)=\frac{n+1}{2}$. In fact, the
complete graph $K_n$ is a unique graph attaining this value. Let
$G=K_n\setminus e$ where $e\in E(K_n)$. From $(1)$ of Lemma
\ref{lem5} and Observation \ref{obs1},
$\overline{\kappa}_{n-1}(G)\leq
\overline{\lambda}_{n-1}(G)<\frac{n+1}{2}$. Conversely, if $G=K_n$,
then $\overline{\kappa}_{n-1}(G)\geq \kappa_{n-1}(G)=\frac{n+1}{2}$
by Lemma \ref{lem3}. Combining this with Observation \ref{obs4},
$\overline{\kappa}_{n-1}(G)=\frac{n+1}{2}$.

Now consider the case $n$ even. Assume that $G$ is a connected graph
such that $\overline{\kappa}_{n-1}(G)=\frac{n}{2}$. If
$G=K_n\setminus M$ such that $|M|\geq \frac{n}{2}$, then
$\overline{\kappa}_{n-1}(G)\leq
\overline{\lambda}_{n-1}(G)<\frac{n}{2}$ by $(2)$ of Lemma
\ref{lem5}. So $0\leq |M|\leq \frac{n-2}{2}$. Conversely, if
$G=K_n\setminus M$ such that $0\leq |M|\leq \frac{n-2}{2}$, then
$\overline{\kappa}_{n-1}(G)\geq \kappa_{n-1}(G)=\frac{n}{2}$ by
Lemma \ref{lem3}. From this together with Observation \ref{obs4},
$\overline{\kappa}_{n-1}(G)=\frac{n}{2}$.
\end{proof}

\begin{proposition}\label{pro2}
For a connected graph $G$ of order $n \ (n\geq 4)$,
$\overline{\lambda}_{n-1}(G)=\lfloor\frac{n+1}{2}\rfloor$ if and
only if $G=K_n$ for $n$ odd; $G=K_n\setminus M$ for $n$ even, where
$M$ is an edge set such that $0\leq |M|\leq \frac{n-2}{2}$.
\end{proposition}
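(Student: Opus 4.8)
The plan is to mirror the proof of Proposition \ref{pro1}, but to argue directly with the edge-connectivity parameter, so that the step passing from $\overline{\kappa}_{n-1}$ to $\overline{\lambda}_{n-1}$ via part (1) of Observation \ref{obs1} is no longer needed. The key observation is that for $k=n-1$ one has $n-\lceil k/2\rceil=\lfloor(n+1)/2\rfloor$; moreover $k=n-1$ is even precisely when $n$ is odd and odd precisely when $n$ is even. Combining part (2) of Observation \ref{obs1} with Observation \ref{obs3} gives the sandwich
$$
\lambda_{n-1}(G)\ \le\ \overline{\lambda}_{n-1}(G)\ \le\ \Big\lfloor\tfrac{n+1}{2}\Big\rfloor
$$
for every connected graph $G$ of order $n$, and Lemma \ref{lem3} tells us exactly when the left-hand quantity attains the right-hand value.

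For the ``if'' direction: if $n$ is odd and $G=K_n$, then $k=n-1$ is even, so Lemma \ref{lem3} gives $\lambda_{n-1}(K_n)=\lfloor(n+1)/2\rfloor$, and the sandwich above forces $\overline{\lambda}_{n-1}(K_n)=\lfloor(n+1)/2\rfloor$. If $n$ is even and $G=K_n\setminus M$ with $0\le|M|\le\frac{n-2}{2}$, then $k=n-1$ is odd and $|M|\le\frac{k-1}{2}$, so Lemma \ref{lem3} gives $\lambda_{n-1}(G)=\frac{n}{2}=\lfloor\frac{n+1}{2}\rfloor$, and again the sandwich gives $\overline{\lambda}_{n-1}(G)=\lfloor\frac{n+1}{2}\rfloor$. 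For the ``only if'' direction, note that a graph $G$ of order $n$ with $\overline{\lambda}_{n-1}(G)>0$ is connected and misses at most $\binom{n}{2}$ edges, so write $G=K_n\setminus M$ and assume $\overline{\lambda}_{n-1}(G)=\lfloor\frac{n+1}{2}\rfloor$. If $n$ is odd and $|M|\ge 1$, then part (1) of Lemma \ref{lem5} gives $\overline{\lambda}_{n-1}(G)<\frac{n+1}{2}$, a contradiction, so $M=\varnothing$ and $G=K_n$. If $n$ is even and $|M|\ge\frac{n}{2}$, then part (2) of Lemma \ref{lem5} gives $\overline{\lambda}_{n-1}(G)<\frac{n}{2}$, again a contradiction, so $0\le|M|\le\frac{n-2}{2}$, as claimed.

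I do not expect any real obstacle here: the statement is the edge analogue of Proposition \ref{pro1} and the argument is in fact one step shorter, since the quantity of interest is already $\overline{\lambda}$. All the substance has been front-loaded into Lemma \ref{lem3} (the characterization of equality in $\lambda_k$) and Lemma \ref{lem5} (the strict drop once a few edges are removed). The only point requiring a little care is the parity bookkeeping — matching ``$k$ even'' with ``$n$ odd'' and rewriting the bound $|M|\le\frac{k-1}{2}$ of Lemma \ref{lem3} as $|M|\le\frac{n-2}{2}$ — so that the two cases of the cited lemmas line up with the two cases of the statement.
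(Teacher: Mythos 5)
Your proof is correct and follows essentially the same route as the paper: the lower bound comes from the characterization in Lemma \ref{lem3} combined with the sandwich from Observations \ref{obs1} and \ref{obs3}, and the converse is exactly the paper's application of Lemma \ref{lem5}. The only (harmless) difference is that you invoke Lemma \ref{lem3} for $\lambda_{n-1}$ directly instead of detouring through Proposition \ref{pro1} and the inequality $\overline{\lambda}_{n-1}(G)\geq\overline{\kappa}_{n-1}(G)$, which slightly streamlines the ``if'' direction.
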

\begin{proof}
Assume that $G$ is a connected graph satisfying the conditions of
Proposition \ref{pro2}. From Observation \ref{obs1} and Proposition
\ref{pro1}, it follows that $\overline{\lambda}_{n-1}(G)\geq
\overline{\kappa}_{n-1}(G)=\lfloor\frac{n+1}{2}\rfloor$. Combining
this with Observation \ref{obs3},
$\overline{\lambda}_{n-1}(G)=\lfloor\frac{n+1}{2}\rfloor$.
Conversely, suppose
$\overline{\lambda}_{n-1}(G)=\lfloor\frac{n+1}{2}\rfloor$. For $n$
odd, if $G=K_n\setminus e$ where $e\in E(K_n)$, then
$\overline{\kappa}_{n-1}(G)\leq
\overline{\lambda}_{n-1}(G)<\frac{n+1}{2}$ by $(1)$ of Lemma
\ref{lem5}. So the complete graph $K_n$ is a unique graph attaining
this value. For $n$ even, if $G=K_n\setminus M$ where $M\in E(K_n)$
such that $|M|\geq \frac{n}{2}$, then
$\overline{\lambda}_{n-1}(G)<\lfloor\frac{n+1}{2}\rfloor$ by $(2)$
of Lemma \ref{lem5}. So $0\leq |M|\leq \frac{n-2}{2}$.
\end{proof}

We now focus our attention on the case
$\ell=\lfloor\frac{n-1}{2}\rfloor$. Before characterizing the graphs
with $\overline{\lambda}_{n-1}(G)=\lfloor\frac{n-1}{2}\rfloor$, we
need the following four lemmas. The notion of a second minimal
degree vertex in a graph $G$ will be used in the sequel. If $G$ has
two or more minimum degree vertices, then, choosing one of them as
the first minimum degree vertex, a {\it second minimal degree
vertex} is defined as any one of the rest minimum degree vertices of
$G$. If $G$ has only one minimum degree vertex, then a {\it second
minimal degree vertex} is as its name, defined as any one of
vertices that have the second minimal degree. Note that a second
minimal degree vertex is usually not unique.

\begin{lemma}\label{lem6}
Let $G=K_n\setminus M$ be a connected graph of order $n$, where
$M\subseteq E(K_n)$.

$(1)$ If $n \ (n\geq 10)$ is even and $|M|\geq \frac{3n-4}{2}$, then
$\overline{\lambda}_{n-1}(G)<\frac{n-1}{2}$;

$(2)$ If $n \ (n\geq 10)$ is even, $n+1\leq |M|\leq \frac{3n-6}{2}$
and there is a second minimal degree vertex, say $u_1$, such that
$d_G(u_1)\leq \frac{n-4}{2}$, then
$\overline{\lambda}_{n-1}(G)<\frac{n-2}{2}$;

$(3)$ If $n \ (n\geq 8)$ is odd and $|M|\geq n-1$, then
$\overline{\lambda}_{n-1}(G)<\frac{n-1}{2}$.
\end{lemma}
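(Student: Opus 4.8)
The plan is to mimic the counting argument already used in Lemma~\ref{lem5}, but carried out more carefully because the deficiency $|M|$ is now large enough to force the bound below $\lfloor\frac{n-1}{2}\rfloor$ (or $\frac{n-2}{2}$ in part~$(2)$). Fix $S\subseteq V(G)$ with $|S|=n-1$, so $\bar S=\{w\}$ for a single vertex $w$. As in Lemma~\ref{lem4}, a tree in $\mathscr T_1$ uses $n-2$ edges of $E(G[S])\cup E_G[S,\bar S]$ and a tree in $\mathscr T_2$ uses $n-1$ such edges. Writing $x=|\mathscr T_1|$ and $y=|\mathscr T|$, we get the master inequality
\[
(n-2)x+(n-1)(y-x)\le |E(G[S])\cup E_G[S,\bar S]| = \binom{n}{2}-|M_S|,
\]
where $M_S=M\cap\bigl(E(G[S])\cup E_G[S,\bar S]\bigr)$ is the part of the missing edge set not incident only with $w$ in a way that lies outside $S$; since $|\bar S|=1$, in fact all missing edges lie in $E(G[S])\cup E_G[S,\bar S]$ \emph{except} those joining $w$ to $S$ that are counted once, so one should be slightly careful and simply note $|M_S|\ge |M|-d_{\bar G}(w)$, i.e.\ $|M_S|\ge |M|-(n-1-d_G(w))$. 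Solving for $y$ yields $y\le \dfrac{x}{n-1}+\dfrac{\binom n2-|M_S|}{n-1}$.

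For part~$(1)$, I would combine two facts: the trivial bound $x=|\mathscr T_1|\le\big\lfloor\binom{n-1}{2}/(n-2)\big\rfloor=\frac{n-2}{2}$, and the observation that for \emph{some} choice of $S$ we may take $w$ to be a vertex of maximum $\bar G$-degree, so $|M_S|\ge |M|-\Delta(\bar G)$; but more robustly one just uses $|M_S|\ge |M|-(n-1)$ for \emph{every} $S$. Plugging $|M|\ge\frac{3n-4}{2}$ gives $y\le\frac{n-2}{2(n-1)}+\frac{\binom n2-\frac{3n-4}{2}+(n-1)}{n-1}$, and a routine simplification shows the right-hand side is strictly less than $\frac{n-1}{2}$ for $n\ge 10$ even. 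Since $S$ was arbitrary, $\overline\lambda_{n-1}(G)<\frac{n-1}{2}$. Part~$(3)$ is the analogous computation for $n$ odd with $|M|\ge n-1$, using $x\le\big\lfloor\binom{n-1}{2}/(n-2)\big\rfloor=\frac{n-1}{2}$; here the slack is tighter, so I expect to need the sharper estimate $|M_S|\ge |M|-(n-1-\delta(G))$ together with the fact that a graph on $n$ vertices missing $n-1$ edges still has a vertex of not-too-large degree, or simply observe that if every $S$-calculation were $\ge\frac{n-1}{2}$ then summing over all $n$ choices of $w$ double-counts $M$ and produces a contradiction with $|M|\ge n-1$.

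The genuinely delicate case is part~$(2)$, where the hypothesis ``there is a second minimal degree vertex $u_1$ with $d_G(u_1)\le\frac{n-4}{2}$'' is essential and must be used non-trivially. Here I would \emph{not} take $\bar S$ to be a single generic vertex; instead choose $S=V(G)\setminus\{u_0\}$ where $u_0$ is the minimum-degree vertex, so that $u_1\in S$ and $u_1$ has small degree \emph{inside} $G[S]$: precisely $d_{G[S]}(u_1)\ge d_G(u_1)-1$ from above and, crucially, $d_{G[S]}(u_1)\le d_G(u_1)\le\frac{n-4}{2}$. A tree in $\mathscr T$ that is incident to $u_1$ must use at least one edge at $u_1$, and the at most $\frac{n-4}{2}$ edges at $u_1$ in $E(G[S])\cup E_G[S,\bar S]$ therefore cap the number of trees that touch $u_1$; but \emph{every} Steiner tree connecting $S=V\setminus\{u_0\}$ must contain $u_1$ (it is in $S$), hence must be incident to $u_1$. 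This forces $y=|\mathscr T|\le d_{G[S]\cup E_G[S,\bar S]}(u_1)\le\frac{n-4}{2}<\frac{n-2}{2}$. One then has to check that no \emph{other} choice of $S$ (with $\bar S$ a different vertex) can do better, which follows from the master inequality above together with $n+1\le|M|\le\frac{3n-6}{2}$ by the same arithmetic as in parts $(1)$ and $(3)$; the degree hypothesis is exactly what closes the gap that the pure edge-count leaves open. The main obstacle, as I see it, is precisely this bookkeeping in $(2)$: making sure the ``second minimal degree vertex'' argument is applied to an $S$ that genuinely contains $u_1$ with a controlled internal degree, and then verifying that this special $S$ is the worst case so that the maximum over all $S$ of size $n-1$ is still $<\frac{n-2}{2}$.
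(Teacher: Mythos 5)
Your framework (the $\mathscr{T}_1/\mathscr{T}_2$ edge count from Lemma~\ref{lem4}) is the same as the paper's, but the arithmetic you rely on does not close, in parts $(1)$ and $(3)$. First, since $\bar S$ is a single vertex, $E(G[S])\cup E_G[S,\bar S]=E(G)$, so $|M_S|=|M|$ exactly; your ``robust'' weakening $|M_S|\ge |M|-(n-1)$ is not only unnecessary but fatal: the displayed bound $\frac{n-2}{2(n-1)}+\frac{\binom n2-\frac{3n-4}{2}+(n-1)}{n-1}$ simplifies to exactly $\frac n2$, which is not less than $\frac{n-1}{2}$. Even with the exact value $|M_S|=|M|$, the trivial bound $x\le\frac{n-2}{2}$ yields only $y\le\frac{n-2}{2}$ in $(1)$ and $y\le\frac{n-1}{2}$ in $(3)$ --- non-strict in $(3)$, and in $(1)$ too weak for the way the lemma is used later (Proposition~\ref{pro3} needs $\overline{\lambda}_{n-1}(G)<\frac{n-2}{2}$, which is what the paper's proof actually delivers). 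The missing ingredient is a sharper bound on $x=|\mathscr{T}_1|$: since $G$ is connected, the excluded vertex $v$ meets at most $n-2$ edges of $M$, so at least $|M|-(n-2)$ missing edges lie inside $K_n[S]$; this cuts $|E(G[S])|$ below $\binom{n-1}{2}$ and forces $|\mathscr{T}_1|\le\frac{n-4}{2}$ in $(1)$ (resp.\ $\le\frac{n-3}{2}$ in $(3)$), which is exactly what makes the final inequality strict. Neither of your two fallback suggestions for $(3)$ supplies this.

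For part $(2)$ your central idea --- every Steiner tree connecting $S$ contains $u_1$, so $\lambda(S)\le d_G(u_1)\le\frac{n-4}{2}$ --- is the paper's idea, but you apply it only to $S=V(G)\setminus\{u_0\}$ and then claim the remaining choices of $S$ are handled by ``the same arithmetic as in parts $(1)$ and $(3)$.'' That arithmetic does not work here: with $|M|$ as small as $n+1$ the edge count gives only $y\le\frac{n-2}{2}$, not strictly less. The correct (and much simpler) observation is that there are \emph{two} vertices of degree at most $\frac{n-4}{2}$, namely the minimum degree vertex $v$ and the second minimal degree vertex $u_1$; since $|\bar S|=1$, every $S$ of size $n-1$ contains at least one of them, and hence $\lambda(S)\le\frac{n-4}{2}<\frac{n-2}{2}$ for every $S$, with no edge counting at all. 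As written, your proof of $(2)$ covers only one of the $n$ possible sets $S$ and the proposed completion for the others fails.
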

\begin{proof}
$(1)$ For any $S\subseteq V(G)$ such that $|S|=n-1$, obviously,
$|\bar{S}|=1$ and $e\in E(G[S])\cup E_G[S,\bar{S}]$. Set
$S=V(G)\setminus v$ where $v\in V(G)$. Since $G$ is connected graph,
it follows that $d_G(v)\geq 1$ and hence $d_{K_n[M]}(v)\leq n-2$. So
$|M\cap K_n[S]|\geq \frac{3n-4}{2}-(n-2)=\frac{n}{2}$ and
$|E(G[S])|\leq {{n-1}\choose{2}}-\frac{n}{2}$. Therefore,
$|\mathscr{T}_1|\leq
\frac{{{n-1}\choose{2}}-\frac{n}{2}}{n-2}=\frac{n-2}{2}-\frac{1}{n-2}<\frac{n-2}{2}$,
namely, $|\mathscr{T}_1|\leq \frac{n-4}{2}$. Let $|\mathscr{T}_1|=x$
and $|\mathscr{T}|=y$. Then $|\mathscr{T}_2|=y-x$. Since
$(n-2)|\mathscr{T}_1|+(n-1)|\mathscr{T}_2|\leq |E(G[S])\cup
E_G[S,\bar{S}]|$, it follows that $(n-2)x+(n-1)(y-x)\leq
{{n}\choose{2}}-\frac{3n-4}{2}$. Then
$\lambda(S)=|\mathscr{T}|=y\leq
\frac{x}{n-1}+\frac{n}{2}-\frac{3n-4}{2(n-1)}\leq
\frac{n-2}{2}-\frac{1}{n-1}<\frac{n-2}{2}$. So
$\overline{\lambda}_{n-1}(G)<\frac{n-2}{2}$.

$(2)$ Let $v$ be the vertex such that $d_G(v)=\delta(G)$. Then
$d_G(v)\leq d_G(u_1)\leq \frac{n-4}{2}$. For any $S\subseteq V(G)$
with $|S|=n-1$, at least one of $u_1,v$ belongs to $S$, say $u_1\in
S$. Hence $\overline{\lambda}_{n-1}(G)\leq \lambda(S)\leq
d_G(u_1)\leq \frac{n-4}{2}<\frac{n-2}{2}$.

$(3)$ The proof of $(3)$ is similar to that of $(1)$, and thus
omitted.
\end{proof}

\begin{lemma}\label{lem7}
Let $H$ be a connected graph of order $n-1$.

$(1)$ If $n \ (n\geq 5)$ is odd, $e(H)\geq {{n-2}\choose{2}}$,
$\delta(H)\geq \frac{n-3}{2}$ and any two vertices of degree
$\frac{n-3}{2}$ are nonadjacent, then $H$ contains $\frac{n-3}{2}$
edge-disjoint spanning trees.

$(2)$ If $n \ (n\geq 7)$ is even, $e(H)\geq
{{n-2}\choose{2}}-\frac{n-2}{2}$, $\delta(H)\geq \frac{n-4}{2}$ and
any two vertices of degree $\frac{n-4}{2}$ are nonadjacent, then $H$
contains $\frac{n-4}{2}$ edge-disjoint spanning trees.
\end{lemma}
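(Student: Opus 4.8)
The plan is to apply the Nash–Williams/Tutte criterion (Theorem~\ref{th1}) to $H$ and show that for every partition $\mathscr{P}$ of $V(H)$ into $p$ blocks one has $\|H/\mathscr{P}\|\geq t(p-1)$, where $t=\frac{n-3}{2}$ in case $(1)$ and $t=\frac{n-4}{2}$ in case $(2)$. As in the proof of Lemma~\ref{lem2}, write $\|H/\mathscr{P}\|\geq e(H)-\sum_{i=1}^p\binom{n_i}{2}$ where $n_i=|V_i|$ and $\sum n_i=n-1$. The function $\sum\binom{n_i}{2}$ is maximized (for fixed $p$) at $n_1=\cdots=n_{p-1}=1$, $n_p=n-p$, so the generic bound needed is $e(H)-\binom{n-p}{2}\geq t(p-1)$, and this should hold for all $p$ in the ``middle'' range by the same kind of quadratic-in-$p$ estimate used in Lemma~\ref{lem2}, using the hypothesis $e(H)\geq\binom{n-2}{2}$ (resp.\ $\binom{n-2}{2}-\frac{n-2}{2}$).

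The subtlety, and the reason the degree hypotheses are present, is the extreme partitions. For $p=2$ with a singleton block $\{v\}$, one only gets $\|H/\mathscr{P}\|=d_H(v)\geq\delta(H)=t$, which barely meets the requirement $t(p-1)=t$; similarly a small block can force $\|H/\mathscr{P}\|$ down near $t$. So first I would dispose of the large-$p$ range ($p$ close to $n-1$) by direct counting: for $p=n-1$ every block is a singleton, $\|H/\mathscr{P}\|=e(H)$, and one checks $e(H)\geq t(n-2)$ from the hypothesis; the cases $p=n-2, n-3$ are handled the same way with the maximizing configuration. Then the middle range $3\leq p\leq n-4$ (say) goes through the quadratic estimate cleanly with room to spare. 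The genuinely delicate case is $p=2$ and, more generally, partitions with exactly one singleton block $\{v\}$ where $d_H(v)=\delta(H)=t$: here the crude bound $e(H)-\sum\binom{n_i}{2}$ is not enough and one must argue directly that $\|H/\mathscr{P}\|=d_H(v)$ plus the edges leaving the other blocks is at least $t(p-1)$.

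This is exactly where the condition ``any two vertices of degree $t$ are nonadjacent'' enters. For a partition into $\{v\}$ and one other block (the $p=2$ case), $\|H/\mathscr{P}\|=d_H(v)\geq t$ suffices. For a partition with a singleton $\{v\}$ of degree $t$ and $p\geq 3$ blocks total, I would count $\|H/\mathscr{P}\|$ as $d_H(v)$ plus the crossing edges among the remaining $p-1$ blocks of the induced partition on $V(H)\setminus v$, a graph on $n-2$ vertices with at least $e(H)-t$ edges and in which (by the nonadjacency hypothesis) every remaining vertex of degree $t$ in $H$ now has degree $\geq t+\text{(something)}$ or at worst the minimum degree is controlled; applying the middle-range estimate to this smaller graph yields $\geq t(p-2)$, and adding $d_H(v)\geq t$ gives the required $t(p-1)$. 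The main obstacle I anticipate is bookkeeping the boundary values of $p$ (near $2$ and near $n-1$) simultaneously with the parity split between $n$ odd and $n$ even, and verifying that the ``$-\frac{n-2}{2}$'' slack in case $(2)$ is still enough at $p=n-2,n-3$; I would handle $(1)$ in full and remark that $(2)$ follows by the same computation with the shifted constants.
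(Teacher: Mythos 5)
Your overall strategy is exactly the paper's: apply the Nash--Williams/Tutte criterion and verify $\|H/\mathscr{P}\|\geq t(p-1)$ by comparing $e(H)-\sum_i\binom{n_i}{2}$ (maximized at the all-but-one-singleton configuration) against $t(p-1)$, with the degree hypotheses rescuing the partitions containing very small blocks. For part $(1)$ this works as you describe; in fact the crude edge-count bound already suffices for \emph{every} $3\leq p\leq n-1$ (the needed inequality reduces to $(p-3)(n-p-1)\geq 0$), so only $p=2$ needs $\delta(H)\geq\frac{n-3}{2}$, and the nonadjacency hypothesis is not even used in $(1)$.

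There is, however, one concrete point where your plan as written would fail, namely part $(2)$ at $p=3$. You assert that the middle range $3\leq p\leq n-4$ ``goes through the quadratic estimate cleanly with room to spare,'' but with $e(H)\geq\binom{n-2}{2}-\frac{n-2}{2}$, $t=\frac{n-4}{2}$ and the extremal partition $n_1=n_2=1$, $n_3=n-4$ (two singletons), the crude bound gives only $e(H)-\binom{n-3}{2}\geq\frac{n-4}{2}=t$, whereas you need $2t=n-4$; the estimate misses by a full factor. Moreover, your designated rescue clause covers only ``partitions with \emph{exactly one} singleton block of degree $t$,'' so this two-singleton configuration falls through both branches of your argument. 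The fix is the same degree-sum count you use at $p=2$: if $V_1=\{u\}$ and $V_2=\{v\}$ then $\|H/\mathscr{P}\|=d(u)+d(v)-[uv\in E]\geq 2t$, using that two degree-$t$ vertices are nonadjacent --- this is precisely where that hypothesis is indispensable (it is also needed at $p=2$ with $|V_1|=2$ in case $(2)$, where the crude bound again falls short by one). Note also that the remaining inequalities in case $(2)$ hold only with equality at $p=4$ and $p=n-1$, so there is no slack anywhere; each boundary value of $p$ must actually be checked, as the paper does (for case $(1)$) rather than deferred.
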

\begin{proof}
We only give the proof of $(1)$, $(2)$ can be proved similarly. Let
$\mathscr{P}=\bigcup_{i=1}^pV_i$ be a partition of $V(H)$ with
$|V_i|=n_i \ (1\leq i\leq p)$, and $\mathcal {E}_p$ be the set of
edges between distinct blocks of $\mathscr{P}$ in $H$. It suffices
to show $|\mathcal {E}_p|\geq \frac{n-3}{2}(|\mathscr{P}|-1)$ so
that we can use Theorem \ref{th1}.

The case $p=1$ is trivial, thus we assume $p\geq 2$. For $p=2$, we
have $\mathscr{P}=V_1\cup V_2$. Set $|V_1|=n_1$. Then
$|V_2|=n-1-n_1$. If $n_1=1$ or $n_1=n-2$, then $|\mathcal
{E}_2|=|E_G[V_1,V_2]|\geq \frac{n-3}{2}$ since $\delta(H)\geq
\frac{n-3}{2}$. Suppose $2\leq n_1\leq n-3$. Clearly, $|\mathcal
{E}_2|=|E_G[V_1,V_2]|\geq
{{n-2}\choose{2}}-{{n_1}\choose{2}}-{{n-1-n_1}\choose{2}}
=-n_1^2+(n-1)n_1-(n-2)$. Since $2\leq n_1\leq n-3$, one can see that
$|\mathcal {E}_2|$ attains its minimum value when $n_1=2$ or
$n_1=n-3$. Thus $|\mathcal {E}_2|\geq n-4 \geq \frac{n-3}{2}$ since
$n\geq 5$. So the conclusion holds for $p=2$ by Theorem \ref{th1}.

Now consider the remaining case $p$ with $3\leq p\leq n-1$. Since
$|\mathcal {E}_p|\geq e(H)-\sum_{i=1}^p{{n_i}\choose{2}}\geq
{{n-2}\choose{2}}-\sum_{i=1}^p{{n_i}\choose{2}}$, we need to show
that ${{n-2}\choose{2}}-\sum_{i=1}^p{{n_i}\choose{2}}\geq
\frac{n-3}{2}(p-1)$, that is,
${{n-2}\choose{2}}-\frac{n-3}{2}(p-1)\geq
\sum_{i=1}^p{{n_i}\choose{2}}$. Furthermore, we only need to prove
that ${{n-2}\choose{2}}-\frac{n-3}{2}(p-1)\geq
max\{\sum_{i=1}^p{{n_i}\choose{2}}\}$. Since
$f(n_1,n_2,\cdots,n_p)=\sum_{i=1}^p{{n_i}\choose{2}}$ attains its
maximum value when $n_1=n_2=\cdots=n_{p-1}=1$ and $n_p=n-p$, we need
the inequality ${{n-2}\choose{2}}-\frac{n-3}{2}(p-1)\geq
{{1}\choose{2}}(p-1)+{{n-p}\choose{2}}$, that is, $(p-3)(n-p-1)\geq
0$. Since $3\leq p \leq n-1$, one can see that the inequality holds.
Thus, $|\mathcal {E}_p|\geq \frac{n-3}{2}(p-1)$. From Theorem
\ref{th1}, there exist $\frac{n-3}{2}$ edge-disjoint spanning trees.
\end{proof}

The following theorem, due to Dirac, is well-known.

\begin{theorem}\cite{Bondy}(p-485)\label{th3}
Let $G$ be a simple graph of order $n \ (n\geq 3)$ and minimum
degree $\delta$. If $\delta\geq \frac{n}{2}$, then $G$ is
Hamiltonian.
\end{theorem}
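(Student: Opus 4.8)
The plan is to prove this by the standard longest-path (rotation) argument, which needs nothing beyond the degree hypothesis. First I would observe that $\delta\geq n/2$ forces $G$ to be connected: a component on $m$ vertices with $m\leq n/2$ would contain a vertex of degree at most $m-1<n/2$, contradicting the hypothesis.

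Next I would fix a longest path $P=v_0v_1\cdots v_k$ in $G$. Maximality of $P$ forces every neighbour of $v_0$ and every neighbour of $v_k$ to lie on $P$, since otherwise $P$ could be extended at that end. I would then look at the two index sets
$$A=\{\,i:0\leq i\leq k-1,\ v_0v_{i+1}\in E(G)\,\},\qquad B=\{\,i:0\leq i\leq k-1,\ v_iv_k\in E(G)\,\},$$
both contained in $\{0,1,\dots,k-1\}$. Since $|A|=d_G(v_0)\geq n/2$, $|B|=d_G(v_k)\geq n/2$ and $|A\cup B|\leq k\leq n-1$, inclusion--exclusion gives $|A\cap B|\geq n-(n-1)=1$, so some index $i$ satisfies both $v_0v_{i+1}\in E(G)$ and $v_iv_k\in E(G)$. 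Then
$$C=v_0v_1\cdots v_i\,v_k v_{k-1}\cdots v_{i+1}\,v_0$$
is a cycle whose vertex set is exactly $V(P)$.

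Finally I would argue that $C$ must be spanning. If $V(C)\neq V(G)$, then connectivity of $G$ supplies a vertex $w\notin V(C)$ adjacent to some $v_j\in V(C)$; deleting one of the two cycle-edges of $C$ incident with $v_j$ and then appending the edge $wv_j$ yields a path on $|V(C)|+1=k+2$ vertices, which is strictly longer than $P$ --- a contradiction. Hence $|V(C)|=n$ and $C$ is a Hamilton cycle, so $G$ is Hamiltonian.

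The only point needing care is the pigeonhole step: one must arrange $A$ and $B$ as subsets of a single $k$-element index set and combine $k\leq n-1$ with $\delta\geq n/2$ to force $A\cap B\neq\varnothing$; the $+1$ shift in the definition of $A$ (using $v_{i+1}$ rather than $v_i$) is precisely what makes the two chords $v_0v_{i+1}$ and $v_iv_k$ splice the rerouted path into a closed cycle. Everything else is immediate.
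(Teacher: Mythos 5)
Your argument is correct: it is the standard longest-path (Ore--Dirac rotation) proof of Dirac's theorem, and every step checks out --- connectivity from $\delta\geq n/2$, the pigeonhole on the shifted index sets $A$ and $B$ inside $\{0,\dots,k-1\}$ with $k\leq n-1$, the splicing of the two chords into a cycle on $V(P)$, and the extension argument showing the cycle must be spanning. The paper itself gives no proof of this statement; it is quoted as a classical theorem of Dirac with a citation to Bondy and Murty, so there is nothing to compare against --- your proof is exactly the textbook one being referenced.
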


\begin{lemma}\label{lem8}
If $n \ (n\geq 8)$ is odd and $G=K_n\setminus M$ such that
$|M|=n-2$, then $\overline{\kappa}_{n-1}(G)\geq \frac{n-1}{2}$.
\end{lemma}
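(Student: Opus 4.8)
The plan is to produce a concrete set $S$ with $|S|=n-1$ for which $\kappa(S)\ge\frac{n-1}{2}$, by taking $S=V(G)\setminus\{v\}$ with $v$ a vertex of minimum degree, and then exhibiting $\frac{n-3}{2}$ internally disjoint $S$-trees lying inside $G[S]$ together with one further $S$-tree passing through $v$. Since $|\bar S|=1$, every $S$-tree is either a spanning tree of $G[S]$ or a spanning tree of $G$ containing $v$, and among internally disjoint $S$-trees at most one is of the latter kind (two such would share $v\notin S$). Moreover $e(G)=\binom n2-(n-2)=\binom{n-2}{2}+(n-1)$, so the ``type-2'' tree $T^{*}$, if used, is forced to contain all $d_G(v)$ edges at $v$, whence $F:=E(T^{*})\cap E(G[S])$ must be a spanning forest of $G[S]$ with exactly $a:=n-1-d_G(v)$ edges and $n-1-a$ components, each meeting $N_G(v)$ in exactly one vertex. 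Thus it suffices to find such an $F$ for which $G[S]-F$ contains $\frac{n-3}{2}$ edge-disjoint spanning trees: those trees (which are $S$-trees with vertex set $S$) together with $T^{*}=F\cup\{vx:x\in N_G(v)\}$ then give $\frac{n-1}{2}$ internally disjoint $S$-trees.

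Next I would record the density of $G[S]$. Writing $M'=M\cap\binom S2$ we have $|M'|=n-2-a$, and since $\sum_u d_{K_n[M]}(u)=2(n-2)$ forces $a\ge 2$, while $d_{K_n[M]}(u)+d_{K_n[M]}(v)\le|M|+1$ for the maximum-$M$-degree vertex $v$, a short argument gives $d_{M'}(u)\le\frac{n-3}{2}$ for every $u\in S$; hence $\delta(G[S])\ge\frac{n-1}{2}$, with at most one vertex of $S$ attaining degree exactly $\frac{n-1}{2}$. In particular $G[S]$ is Hamiltonian by Theorem \ref{th3}. Now take a Hamilton cycle of $G[S]$, delete one edge incident with the (possible) unique minimum-degree vertex $u_0$ to get a Hamilton path $P$ having $u_0$ as an endpoint, and delete a further $n-2-a$ edges of $P$ so as to split $P$ into $n-1-a$ subpaths, each containing exactly one vertex of $N_G(v)$ (possible because the vertices of $N_G(v)$ occur in some order along $P$). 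The resulting forest $F$ has $a$ edges, maximum degree at most $2$, and keeps $u_0$ as an endpoint of its subpath; and $T^{*}=F\cup\{vx:x\in N_G(v)\}$ is a spanning tree of $G$ using all edges at $v$.

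It then remains to verify that $H:=G[S]-F$ satisfies the hypotheses of Lemma \ref{lem7}(1). The edge count is immediate: $e(H)=e(G[S])-a=\binom{n-1}{2}-|M'|-a=\binom{n-2}{2}$. For the minimum degree, $d_H(u)=d_{G[S]}(u)-d_F(u)\ge\frac{n-1}{2}-1=\frac{n-3}{2}$ at $u_0$ (an endpoint of its subpath) and $\ge\frac{n+1}{2}-2=\frac{n-3}{2}$ at every other vertex, so $\delta(H)\ge\frac{n-3}{2}$. Finally a vertex of degree exactly $\frac{n-3}{2}$ in $H$ must have $d_{M'}(u)\ge\frac{n-5}{2}$, and two such vertices would give $d_{M'}(u_1)+d_{M'}(u_2)\le|M'|+1=n-1-a$, which is compatible with $\ge n-5$ only when $a\le 4$; so in the generic range $a\ge 5$ the graph $H$ has at most one vertex of degree $\frac{n-3}{2}$ and Lemma \ref{lem7}(1) applies at once. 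In the remaining case $a\le 4$ one uses that $G[S]$ is then almost complete and $F$ has only $a\le 4$ edges, which can be routed to avoid all of the (few) low-degree vertices of $G[S]$, so that once more no two vertices of degree $\frac{n-3}{2}$ in $H$ are adjacent; the extreme case $a=n-2$ is even easier, since there $G[S]=K_{n-1}$ already contains $\frac{n-1}{2}$ edge-disjoint spanning trees. With Lemma \ref{lem7}(1) in hand $H$ has $\frac{n-3}{2}$ edge-disjoint spanning trees, and adjoining $T^{*}$ yields $\overline{\kappa}_{n-1}(G)\ge\kappa(S)\ge\frac{n-1}{2}$.

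The step I expect to be the main obstacle is the last one: controlling the degree sequence of $G[S]-F$ so that the \emph{nonadjacency of degree-$\tfrac{n-3}{2}$ vertices} hypothesis of Lemma \ref{lem7}(1) holds. The counting bound on $M'$ handles this cleanly for all but boundedly many parameter choices, but for small $a$ (where $G[S]$ is nearly complete) one has to argue a little more carefully about where the edges of the cut-up Hamilton path are placed, and it is probably cleanest to dispatch the few residual small values of $n$ and of $a$ by hand.
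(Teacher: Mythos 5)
Your overall strategy is essentially the paper's: pick $v$ of minimum degree, set $S=V(G)\setminus\{v\}$, build one $S$-tree through $v$ by cutting a Hamilton cycle of $G[S]$ into segments each carrying exactly one neighbour of $v$, and extract the remaining $\frac{n-3}{2}$ trees from the residual graph via Lemma \ref{lem7}(1). Your observation that $d_{M'}(u)\le\min\{a,\,n-2-a\}\le\frac{n-3}{2}$ for every $u\in S$ (where $a=d_{K_n[M]}(v)$ is the maximum $M$-degree) is a genuine improvement in organization: it yields $\delta(G[S])\ge\frac{n-1}{2}$ and the uniqueness of a degree-$\frac{n-1}{2}$ vertex uniformly in $a$, whereas the paper splits into $\delta(G)\le\frac{n-1}{2}$ (Hamilton-cycle argument) and $\delta(G)\ge\frac{n+1}{2}$ (a separate greedy edge-selection procedure with its own Claim 1). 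The bookkeeping for $T^{*}$, the edge count $e(H)={{n-2}\choose{2}}$, and the bound $\delta(H)\ge\frac{n-3}{2}$ all check out.

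The gap is exactly where you flagged it, and it is not closed. Lemma \ref{lem7}(1) needs that any two vertices of degree $\frac{n-3}{2}$ in $H=G[S]-F$ are nonadjacent. Your counting ($d_{M'}(u)\ge\frac{n-5}{2}$ for any such vertex, and $d_{M'}(u_1)+d_{M'}(u_2)\le|M'|+1=n-1-a$) disposes of $a\ge 5$; and for $a\le 4$ with $n\ge 15$ one even has $d_H(u)\ge n-2-4-2=n-8>\frac{n-3}{2}$, so the hypothesis is vacuous. But for $n\in\{9,11,13\}$ with $2\le a\le 4$ --- values squarely inside the lemma's scope and needed downstream in Proposition \ref{pro3} --- the assertion that $F$ ``can be routed to avoid all of the low-degree vertices'' is not a proof. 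The routing of $F$ is heavily constrained (a union of subpaths of one Hamilton path, one neighbour of $v$ per subpath, $u_0$ an endpoint), and you must show either that every vertex with $d_{M'}(u)=\frac{n-5}{2}$ can be kept off the interiors of the subpaths, or that any two interior vertices which fall to degree $\frac{n-3}{2}$ end up nonadjacent in $H$; neither is automatic, and the interaction between the positions of the $M'$-heavy vertices and the positions of $N_G(v)$ along the cycle has to be confronted. The paper avoids this entirely: in its Case 1 the hypothesis $d_{K_n[M]}(v)\ge\frac{n-1}{2}$ lets it prove that $G[S]$ has at most one vertex of degree $\le\frac{n+1}{2}$, so at most one vertex of $H$ can reach degree $\frac{n-3}{2}$ and nonadjacency is automatic; in its Case 2 the greedy procedure delivers $\delta(G'[S])\ge\frac{n-1}{2}$ outright. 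To complete your version you would need either to replicate one of those stronger conclusions in the range $a\le 4$, $n\le 13$, or to carry out the explicit case analysis you defer.
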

\begin{proof}
Clearly, $e(G)={{n-1}\choose{2}}+1$. Let $v$ be the vertex such that
$d_G(v)=\delta(G)=r$. Choose $S=V(G)\setminus v$. Then $|S|=n-1$. We
distinguish the following cases to show this lemma.

\textbf{Case 1}. $1\leq \delta(G)\leq \frac{n-1}{2}$.

If $\delta(G)=r=1$, then $e(G-v)={{n-1}\choose{2}}$, which implies
that $G-v$ is a clique of order $n-1$. Obviously, $G[S]$ contains
$\frac{n-1}{2}$ trees connecting $S$, namely,
$\overline{\kappa}_{n-1}(G-v)\geq \frac{n-1}{2}$. Therefore,
$\overline{\kappa}_{n-1}(G)\geq \frac{n-1}{2}$.

Suppose $\delta(G)=r\geq 2$. Since $d_G(v)\leq \frac{n-1}{2}$, it
follows that $d_{K_n[M]}(v)\geq n-1-\frac{n-1}{2}=\frac{n-1}{2}$.
Combining this with $|M|=n-2$, $|M\cap E(K_n[S])|\leq
n-2-\frac{n-1}{2}\leq \frac{n-3}{2}$, namely, $G[S]$ is a graph
obtained from a clique of order $n-1$ by deleting at most
$\frac{n-3}{2}$ edges. So $\delta(G[S])\geq
n-2-\frac{n-3}{2}=\frac{n-1}{2}$. Assume that there exists a vertex
in $S$, say $u_1$, such that $d_{G[S]}(u_1)\leq \frac{n+1}{2}$. That
is $d_{G[S]}(u_1)=\frac{n-1}{2}$ or $d_{G[S]}(u_1)=\frac{n+1}{2}$.
Then $d_{G}(u_1)\leq \frac{n+3}{2}$, and hence $d_{K_n[M]}(u_1)\geq
n-1-\frac{n+3}{2}=\frac{n-5}{2}$. We claim that the degree of each
vertex of $S\setminus u_1$ is larger than $\frac{n+3}{2}$ in $G[S]$.
Assume, to the contrary, that there exists a vertex in $S\setminus
u_1$, say $u_2$, such that $d_{G[S]}(u_2)\leq \frac{n+1}{2}$. Then
$d_{G}(u_2)\leq \frac{n+3}{2}$, and hence $d_{K_n[M]}(u_2)\geq
\frac{n-5}{2}$. Therefore, $|M|\geq
d_{K_n[M]}(v)+d_{K_n[M]}(u_1)+d_{K_n[M]}(u_2)\geq
\frac{n-1}{2}+2\cdot \frac{n-5}{2}=\frac{3n-11}{2}>n-2$, a
contradiction. From the above, we conclude that there exists at most
one vertex in $G[S]$ such that its degree is $\frac{n-1}{2}$ or
$\frac{n+1}{2}$. Since $\delta(G[S])\geq \frac{n-1}{2}$, from
Theorem \ref{th3} $G[S]$ is Hamiltonian and hence $G[S]$ contains a
Hamilton cycle $C$. Let $S=\{u_1,u_2,\cdots,u_{n-1}\}$ such that
$vu_i\in E(G) \ (1\leq i\leq r)$. Clearly, $vu_j\in M \ (r+1\leq
j\leq n-1)$. Then the vertices $u_1,u_2,\cdots,u_{r}$ divide the
cycle $C$ into $r$ paths, say $P_1,P_2,\cdots,P_{r}$; see Figure 1
$(a)$. We choose one edge $e_i\in E(P_i)\ (1\leq i\leq r)$ to delete
that satisfies the following conditions:

\ding {182} if there is no vertex of degree $\frac{n-1}{2}$ in
$G[S]$, then $e_i$ is chosen as any edge in $P_i$;

\ding {183} if there exists one vertex $u$ of degree $\frac{n-1}{2}$
in $G[S]$, then $e_i$ is chosen as any edge in $P_i$ that is
incident with $u$.

\begin{figure}[h,t,b,p]
\begin{center}
\scalebox{0.8}[0.8]{\includegraphics{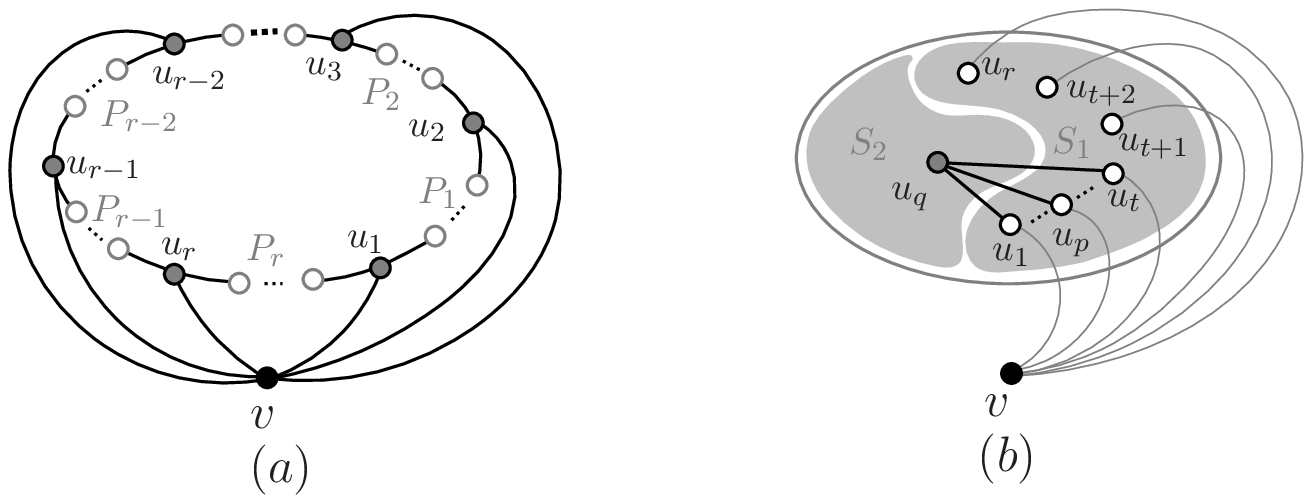}}\\
Figure 1. Graphs for Lemmas \ref{lem8} and \ref{lem9}.
\end{center}
\end{figure}

Then $T=vu_1\cup vu_2\cup\cdots vu_{r}\cup (P_1\setminus e_1)\cup
(P_2\setminus e_2)\cdots (P_{r}\setminus e_r)$ is a Steiner tree
connecting $S$. Set $G_1=G\setminus E(T)$. Clearly,
$\delta(G_1[S])\geq \frac{n-3}{2}$ and there is at most one vertex
of degree $\frac{n-3}{2}$. Combining this with
$e(G_1[S])=e(G)-(n-1)={{n-1}\choose{2}}-(n-2)={{n-2}\choose{2}}$,
$G_1[S]$ contains $\frac{n-3}{2}$ spanning trees by $(1)$ of Lemma
\ref{lem7}. These trees together with the tree $T$ are
$\frac{n-1}{2}$ trees connecting $S$, namely,
$\overline{\kappa}_{n-1}(G)\geq \frac{n-1}{2}$.

\textbf{Case 2}. $\frac{n+1}{2}\leq \delta(G)\leq n-1$.

Let $S=V(G)\setminus v=\{u_1,\cdots,u_{n-1}\}$. Without loss of
generality, let $S_1=\{u_1,\cdots,u_{r}\}$ such that $vu_i\in E(G)$.
Then $\frac{n+1}{2}\leq r\leq n-1$, and $S_2=S\setminus S_1=
\{u_{r+1},\cdots,u_{n-1}\}$. Since $d_G(v)=\delta(G)\geq
\frac{n+1}{2}$, it follows that $|S_1|=r\geq \delta(G)\geq
\frac{n+1}{2}$ and $|S_2|=n-1-r\leq
n-1-\frac{n+1}{2}=\frac{n-3}{2}$. For each $u_j\in S_2 \ (r+1\leq
j\leq n-1)$, $u_j$ has at most $\frac{n-5}{2}$ neighbors in $S_2$
and hence $|E_G[u_j,S_1]|\geq \frac{n+1}{2}-\frac{n-5}{2}=3$ since
$d_G(u_j)\geq \delta(G)\geq \frac{n+1}{2}$. Clearly, the tree
$T'=vu_1\cup vu_2\cup \cdots\cup vu_{r}$ is a Steiner tree
connecting $S_1$. Our idea is to seek for $n-1-r$ edges in
$E_G[S_1,S_2]$ and combine them with $T'$ to form a Steiner tree
connecting $S$. Choose the one with the smallest subscript among the
maximum degree vertices in $S_2$, say $u_{1}'$. Then we search for
the vertex adjacent to $u_1'$ with the smallest subscript among all
the maximum degree vertices in $S_1$, say $u_1''$. Let
$e_1=u_1'u_1''$. Consider the graph $G_1=G\setminus e_1$. Pick up
the one with the smallest subscript among all the maximum degree
vertices in $S_2\setminus u_{1}'$, say $u_{2}'$. Then we search for
the vertex adjacent to $u_2'$ with the smallest subscript among all
the maximum degree vertices in $S_1$, say $u_2''$. Set
$e_2=u_2'u_2''$. We consider the graph $G_2=G_1\setminus
e_2=G\setminus \{e_1,e_2\}$. Choose the one with the smallest
subscript among all the maximum degree vertices in $S_2\setminus
\{u_1',u_2'\}$, say $u_{3}'$. Then we search for the vertex adjacent
to $u_3'$ with the smallest subscript among all the maximum degree
vertices in $S_1$, say $u_3''$. Let $e_3=u_3'u_3''$. We now consider
the graph $G_3=G_2\setminus e_3=G\setminus \{e_1,e_2,e_3\}$. For
each $u_i\in S_2 \ (r+1\leq i\leq n-1)$, we proceed to find
$e_4,e_5,\cdots,e_{n-1-r}$ in the same way. Let
$M'=\{e_1,e_2,\cdots,e_{n-1-r}\}$ and $G_{n-1-r}=G\setminus M'$.
Then $G_{n-1-r}[S]=G[S]\setminus M'$ and the tree $T=vu_{1}\cup
vu_{2}\cup \cdots \cup vu_{r}\cup e_1\cup e_2\cup \cdots \cup
e_{n-1-r}$ is our desired tree. Set $G'=G\setminus E(T)$ (note that
$G'[S]=G_{n-1-r}[S]$).

\noindent {\bf Claim 1}. For each $u_j\in S_1 \ (1\leq j\leq r)$,
$d_{G'[S]}(u_j)\geq \frac{n-1}{2}$.

\noindent{\itshape Proof of Claim $1$.} Assume, to the contrary,
that there exists one vertex $u_p\in S_1$ such that
$d_{G'[S]}(u_p)\leq \frac{n-3}{2}$. By the above procedure, there
exists a vertex $u_q\in S_2$ such that when we pick up the edge
$e_i=u_pu_q$ from $G_{i-1}[S]$ the degree of $u_p$ in $G_{i}[S]$ is
equal to $\frac{n-3}{2}$. That is $d_{G_{i}[S]}(u_p)=\frac{n-3}{2}$
and $d_{G_{i-1}[S]}(u_p)=\frac{n-1}{2}$. From our procedure,
$|E_G[u_q,S_1]|=|E_{G_{i-1}}[u_q,S_1]|$. Without loss of generality,
let $|E_G[u_q,S_1]|=t$ and $u_qu_j\in E(G)$ for $1\leq j\leq t$; see
Figure 1 $(b)$. Thus $u_p\in \{u_1,u_2,\cdots,u_t\}$. Recall that
$|E_G[u_j,S_1]|\geq 3$ for each $u_j\in S_2 \ (r+1\leq j\leq n-1)$.
Since $u_q\in S_2$, we have $t\geq 3$. Clearly, $u_qu_j\notin E(G)$
and hence $u_qu_j\in M$ for $t+1\leq j\leq r$ by our procedure,
namely, $|E_{K_n[M]}[u_q,S_1]|=r-t$. Since
$d_{G_{i-1}[S]}(u_p)=\frac{n-1}{2}$, by our procedure
$d_{G_{i-1}[S]}(u_j)\leq \frac{n-1}{2}$ for each $u_j\in S_1 \
(1\leq j\leq t)$. Assume, to the contrary, that there is a vertex
$u_s \ (1\leq s\leq t)$ such that $d_{G_{i-1}[S]}(u_s)\geq
\frac{n+1}{2}$. Then we should choose the edge $u_qu_s$ instead of
$e_i=u_qu_p$ by our procedure, a contradiction. We conclude that
$d_{G_{i-1}[S]}(u_j)\leq \frac{n-1}{2}$ for each $u_j\in S_1 \
(1\leq i\leq t)$. Clearly, there are at least $n-2-\frac{n-1}{2}$
edges incident to each $u_j \ (1\leq j\leq t)$ that belong to $M\cup
\{e_1,e_2,\cdots,e_{i-1}\}$. Since $i\leq n-1-r$, we have
$\sum_{j=1}^td_{K_n[M]}(u_j)\geq (n-2-\frac{n-1}{2})t-(i-1)>
\frac{n-3}{2}t-(n-1-r)$ and hence $|M|\geq
d_{K_n[M]}(v)+\sum_{j=1}^td_{K_n[M]}(u_j)+|E_{K_n[M]}[u_q,S_1]|>
(n-1-r)+\frac{n-3}{2}t-(n-1-r)+(r-t)=r+\frac{n-5}{2}t\geq
\frac{n+1}{2}+\frac{3(n-5)}{2}=2n-7$, which contradicts to
$|M|=n-2$.

From Claim $1$, $d_{G'[S]}(u_j)\geq \frac{n-1}{2}$ for each $u_j\in
S_1 \ (1\leq i\leq r)$. For each $u_j\in S_2 \ (r+1\leq j\leq n-1)$,
$d_{G'[S]}(u_j)=d_{G[S]}(u_j)-1=d_{G}(u_j)-1\geq \delta(G)-1\geq
\frac{n-1}{2}$. So $\delta(G'[S])\geq \frac{n-1}{2}$. Combining this
with $e(G'[S])=e(G)-(n-1)={{n-2}\choose{2}}$, $G'[S]$ contains
$\frac{n-3}{2}$ spanning trees by From $(1)$ of Lemma \ref{lem7}.
These trees together with the tree $T$ are $\frac{n-1}{2}$ trees
connecting $S$, namely, $\overline{\kappa}_{n-1}(G)\geq
\frac{n-1}{2}$.
\end{proof}

\begin{lemma}\label{lem9}
If $n \ (n\geq 10)$ is even and $G=K_n\setminus M$ such that
$|M|=\frac{3n-6}{2}$ and $d_G(u_1)\geq \frac{n-2}{2}$, then
$\overline{\kappa}_{n-2}(G)\geq \frac{n-2}{2}$, where $u_1$ is a
second minimal degree vertex in $G$.
\end{lemma}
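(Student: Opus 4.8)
The plan is to follow the same two-case strategy used in the proof of Lemma \ref{lem8}, but now taking the base set $S$ to have size $n-2$ rather than $n-1$, and working with the target $\frac{n-2}{2}$ edge-disjoint $S$-trees. First observe that $e(G) = \binom{n}{2} - \frac{3n-6}{2} = \binom{n-1}{2} + \frac{n-3}{2}\cdot\ldots$; more usefully, after removing a cleverly chosen $S$-tree $T$ on $n-2$ vertices (which uses $n-3$ edges if $T \in \mathscr{T}_1$, or $n-2$ edges if it reaches outside $S$, by Lemma \ref{lem4}), we want the residual graph $G'[S]$ on $n-2$ vertices to satisfy the hypotheses of part $(2)$ of Lemma \ref{lem7}, i.e. $e(G'[S]) \geq \binom{n-2}{2} - \frac{n-2}{2}$, $\delta(G'[S]) \geq \frac{n-4}{2}$, and no two vertices of degree $\frac{n-4}{2}$ adjacent. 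So the skeleton is: (i) pick $\bar{S}$ to consist of the minimum degree vertex $v$ together with one more vertex; since $u_1$ is a second minimal degree vertex with $d_G(u_1) \geq \frac{n-2}{2}$, every vertex outside $\{v,u_1\}$ (or outside $\{v\}$) has reasonably large degree, which forces $G[S]$ to be close to complete; (ii) extract one $S$-tree $T$ carefully so that degrees in $G'[S] = G[S]\setminus E(T)$ stay above $\frac{n-4}{2}$ with at most one exception (or with the exceptional low-degree vertices pairwise nonadjacent); (iii) apply Lemma \ref{lem7}(2) to get $\frac{n-4}{2}$ edge-disjoint spanning trees of $G'[S]$, which together with $T$ give $\frac{n-2}{2}$ trees connecting $S$, hence $\overline{\kappa}_{n-2}(G) \geq \frac{n-2}{2}$.

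For the tree-extraction step I would split on $\delta(G)$ exactly as in Lemma \ref{lem8}. In the low-degree case $1 \leq \delta(G) \leq \frac{n}{2}$ (roughly), the bound $|M| = \frac{3n-6}{2}$ together with $d_{K_n[M]}(v)$ being large forces $|M \cap E(K_n[S])|$ to be small, so $G[S]$ is a clique minus few edges and in particular $\delta(G[S]) \geq \frac{n-2}{2}$ with at most one or two low-degree vertices; then Dirac's theorem (Theorem \ref{th3}) gives a Hamilton cycle $C$ of $G[S]$, the neighbors of $v$ in $S$ cut $C$ into paths $P_1,\dots,P_r$, and deleting one edge $e_i$ from each $P_i$ — choosing $e_i$ incident to a would-be low-degree vertex whenever possible, exactly as options \ding{182}–\ding{183} in Lemma \ref{lem8} — produces the $S$-tree $T$ while keeping $\delta(G'[S]) \geq \frac{n-4}{2}$ and the low-degree vertices nonadjacent. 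In the high-degree case $\delta(G)$ large, I would mimic the greedy matching argument of Case 2 of Lemma \ref{lem8}: let $S_1$ be the neighbors of $v$ in $S$, $S_2$ the rest, use that each vertex of $S_2$ sends several edges into $S_1$, and greedily pick one edge $e_i \in E_G[S_1,S_2]$ per vertex of $S_2$ hitting the current maximum-degree vertex of $S_1$; an analogue of Claim 1 shows no vertex of $S_1$ drops below degree $\frac{n-4}{2}$ in $G'[S]$, because otherwise a counting argument on $d_{K_n[M]}$ summed over the relevant vertices of $S_1$ would exceed $|M| = \frac{3n-6}{2}$.

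The main obstacle I expect is the bookkeeping in the high-degree case: with $|S| = n-2$ there are now \emph{two} vertices outside $S$, so $S_2$ (the non-neighbors of $v$ within $S$) and the deleted-edge budget interact differently than in Lemma \ref{lem8}, and one must also track the role of $u_1$ — the second minimal degree vertex — to be sure $\delta(G[S])$ is large enough to even start. Making the Claim-1-type inequality come out strictly below $|M| = \frac{3n-6}{2}$ (rather than $n-2$ as before) is where the precise value of $|M|$ and the hypothesis $d_G(u_1) \geq \frac{n-2}{2}$ must be used sharply; I would handle this by first showing $G[S]$ has at most two vertices of degree $< \frac{n}{2}$ and arranging, when the Hamilton-cycle or greedy construction forces an unavoidable low-degree vertex, that there is at most one such vertex in $G'[S]$ or that any two are nonadjacent — whichever is needed to invoke Lemma \ref{lem7}(2). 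The edge-count $e(G'[S]) = e(G) - (n-2) = \binom{n}{2} - \frac{3n-6}{2} - (n-2)$, which one checks equals $\binom{n-2}{2} - \frac{n-2}{2} + (\text{a nonnegative slack})$, so the cardinality hypothesis of Lemma \ref{lem7}(2) holds with room to spare; the genuine work is entirely in the degree conditions.
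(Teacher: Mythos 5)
Your overall architecture is the same as the paper's: peel off one Steiner tree $T$ rooted at a minimum-degree vertex $v$, show the residue $G[S]\setminus E(T)$ meets the hypotheses of Lemma \ref{lem7}(2), and split on $\delta(G)$ into a Hamilton-cycle case and a greedy edge-selection case modelled on Lemma \ref{lem8}. But as written the plan has a genuine gap precisely at the point you wave at with Dirac. In the low-degree case the hypotheses only give $d_G(u)\geq\frac{n-2}{2}$ for every $u\in S$, hence $\delta(G[S])\geq\frac{n-4}{2}$ --- not $\geq\frac{n-2}{2}$ as you assert, and in any event Dirac's theorem on the $(n-1)$-vertex graph $G[S]$ needs $\delta(G[S])\geq\frac{n}{2}$ (since $n$ is even). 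So the Hamilton-cycle construction only covers the subcase $\delta(G[S])\geq\frac{n}{2}$; the subcases $\delta(G[S])\in\{\frac{n-4}{2},\frac{n-2}{2}\}$, which really occur here because $|M|=\frac{3n-6}{2}$ is much larger than in Lemma \ref{lem8}, are exactly where the paper does most of its work, building $T$ by hand around the (at least two) vertices of degree $n-3$ in $G[S_1'\cup S_2]$ and checking three configurations for where those vertices sit. Nothing in your sketch addresses this, and it is not a routine extension of Lemma \ref{lem8}. Likewise, in the high-degree case the paper's Claim 2 has a second half --- showing at most \emph{one} vertex of $S_1$ ends at degree $\frac{n-4}{2}$, via a separate and more delicate double count over two hypothetical low vertices $u_{p'},u_p$ --- which your ``analogue of Claim 1'' does not anticipate; without it Lemma \ref{lem7}(2) cannot be invoked.

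A secondary point: you follow the printed statement and take $|S|=n-2$, but the paper's own proof takes $S=V(G)\setminus v$ of size $n-1$ and establishes $\overline{\kappa}_{n-1}(G)\geq\frac{n-2}{2}$ (which is what Proposition \ref{pro3} actually cites); the subscript $n-2$ in the statement is evidently a misprint. With $|S|=n-2$ your step (iii) misapplies Lemma \ref{lem7}(2), which is stated only for graphs of order $n-1$; you would need to restate and reprove it for order $n-2$ with shifted thresholds. Your edge count $e(G'[S])=\binom{n-2}{2}-\frac{n-2}{2}+1$ is fine, but, as you yourself say, the genuine work is entirely in the degree conditions, and that work is missing.
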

\begin{proof}
It is clear that
$e(G)={{n-2}\choose{2}}+\frac{n}{2}={{n-1}\choose{2}}-\frac{n-4}{2}$.
Let $v$ be the vertex such that $d_G(v)=\delta(G)=r$. Let
$S=V(G)\setminus v=\{u_1,\cdots,u_{n-1}\}$. Without loss of
generality, let $S_1=\{u_1,\cdots,u_{r}\}$ such that $vu_i\in E(G) \
(1\leq i\leq r)$. Then $S_2=S\setminus S_1=
\{u_{r+1},\cdots,u_{n-1}\}$ such that $vu_i\in M \ (r+1\leq i\leq
n-1)$. We have the following two cases to consider.

\textbf{Case 1}. $1\leq \delta(G)\leq \frac{n-2}{2}$.

If $d_G(v)=\delta(G)=1$, then
$e(G-v)={{n-1}\choose{2}}-\frac{n-2}{2}$, which implies that $G-v$
is a graph obtained from a clique of order $n-1$ by deleting
$\frac{n-2}{2}$ edges. From Corollary \ref{cor1} and Observation
\ref{obs1}, $\overline{\kappa}_{n-1}(G-v)=\frac{n-2}{2}$. Therefore,
$\overline{\kappa}_{n-1}(G)\geq \frac{n-2}{2}$. Suppose
$\delta(G)\geq 2$. Since $\delta(G)\leq \frac{n-2}{2}$,
$d_{K_n[M]}(v)\geq n-1-\frac{n-2}{2}=\frac{n}{2}$ and hence $|M\cap
K_n[S]|\leq n-3$. Since $d_{G}(u_1)\geq \frac{n-2}{2}$ where $u_1$
is a second minimal degree vertex, we have $\delta(G[S])\geq
\frac{n-4}{2}$.

First, we consider the case $\delta(G[S])\geq \frac{n}{2}$. We claim
that there are at most two vertices of degree $\frac{n}{2}$ in
$G[S]$. Assume, to the contrary, that there are three vertices of
degree $\frac{n}{2}$ in $G[S]$, say $u_1,u_2,u_3$. Then
$d_{G}(u_i)\leq \frac{n+2}{2}$ for $i=1,2,3$ and hence
$d_{K_n[M]}(u_i)\geq \frac{n-4}{2}$. Therefore, $|M|\geq
d_{K_n[M]}(v)+\sum_{i=1}^3d_{K_n[M]}(u_i)\geq \frac{n}{2}+3\cdot
\frac{n-4}{2}=\frac{4n-12}{2}=2n-6>\frac{3n-6}{2}$, a contradiction.
From the above, we conclude that there exist at most two vertices in
$G[S]$ with degree $\frac{n}{2}$. Since $\delta(G[S])\geq
\frac{n}{2}> \frac{n-1}{2}$, from Theorem \ref{th3} $G[S]$ is
Hamiltonian and hence $G[S]$ contains a Hamilton cycle $C$. Then the
vertices $u_1,u_2,\cdots,u_{r}$ divide the cycle $C$ into $r$ paths,
say $P_1,P_2,\cdots,P_{r}$. We choose one edge $e_i\in E(P_i)\
(1\leq i\leq r)$ to delete that satisfies the following conditions:

\ding {182} if there are two vertices of degree $\frac{n}{2}$, say
$u_1,u_2$ in $G[S]$, then $e_i$ is chosen as any edge in $P_i$ that
is incident with at least one of $u_1,u_2$;

\ding {183} if there is at most one vertex of degree $\frac{n}{2}$,
then $e_i$ is chosen as any edge in $P_i$.

Then $T=vu_1\cup vu_2\cup\cdots vu_{r}\cup (P_1\setminus e_1)\cup
(P_2\setminus e_2)\cdots (P_{r}\setminus e_r)$ is a Steiner tree
connecting $S$. Set $G_1=G\setminus E(T)$. Obviously,
$\delta(G_1[S])\geq \frac{n-4}{2}$ and there is at most one vertex
of degree $\frac{n-4}{2}$. Combining this with
$e(G_1[S])=e(G)-(n-1)={{n-2}\choose{2}}-\frac{n-2}{2}$, $G_1[S]$
contains $\frac{n-4}{2}$ spanning trees by $(2)$ of Lemma
\ref{lem7}. These trees together with the tree $T$ are
$\frac{n-2}{2}$ trees connecting $S$, namely,
$\overline{\kappa}_{n-1}(G)\geq \frac{n-2}{2}$.

Next, we focus on the case that $\delta(G[S])=\frac{n-2}{2}$ and
$\delta(G[S])=\frac{n-4}{2}$. If $\delta(G[S])=\frac{n-4}{2}$, then
there exists a vertex, say $u_1$, such that $d_{G[S]}(u_1)=
\frac{n-4}{2}$. Since the degree of a second minimal degree vertex
is not less than $\frac{n-2}{2}$, we have $u_1\in S_1$. Thus
$d_{G}(u_1)=\frac{n-2}{2}$ and $vu_1\in E(G)$. If
$\delta(G[S])=\frac{n-2}{2}$, then there exists a vertex, say $u_1$,
such that $d_{G[S]}(u_1)=\frac{n-2}{2}$ and $u_1\in S_1$, or
$d_{G[S]}(u_1)=\frac{n-2}{2}$ and $u_1\in S_2$. Thus
$d_{G}(u_1)=\frac{n}{2}$ and $u_1\in S_1$, or
$d_{G}(u_1)=\frac{n-2}{2}$ and $u_1\in S_2$. We only give the proof
of the case that $d_{G}(u_1)=\frac{n}{2}$ and $u_1\in S_1$. The
other two cases can be proved similarly.

Suppose $d_{G}(u_1)=\frac{n}{2}$ and $u_1\in S_1$. Similar to the
proof of Lemma \ref{lem8}, we want to find out a tree connecting $S$
with root $v$, say $T$. Let $G_1=G\setminus E(T)$. We hope that the
graph $G_1[S]$ satisfies the conditions of $(2)$ of Lemma
\ref{lem7}. Thus there are $\frac{n-4}{2}$ spanning trees connecting
$S$ in $G_1[S]$. These trees together with the tree $T$ are
$\frac{n-2}{2}$ trees connecting $S$, namely,
$\overline{\kappa}_{n-1}(G)\geq \frac{n-2}{2}$. Let
$S_1'=S_1\setminus u_1$ and $S'=S_1'\cup S_2$. Let us focus on the
graph $G[S_1']$. If $r=2$, then $G[S']$ is a graph obtained from a
clique of order $n-2$ by deleting one edge since
$d_{K_n[M]}(u_1)=\frac{n-2}{2}$ and $d_{K_n[M]}(v)=n-3$ and
$|M|=\frac{3n-6}{2}$. Without loss of generality, let
$N_G(v)=\{u_1,u_2\}$. Clearly, $G[S']$ contains a Hamilton path $P$
with $u_2$ as one of its endpoints. Then $T=vu_1\cup vu_2\cup P$.
Set $G_1=G\setminus E(T)$. Thus $\delta(G_1[S'])=\delta(G[S'])-2\geq
n-4-2=n-6\geq \frac{n-2}{2}$. Combining this with
$d_{G_1[S]}(u_1)=\frac{n-2}{2}$, the result follows by Lemma
\ref{lem7}. Now assume $r\geq 3$. Since
$d_{K_n[M]}(u_1)=\frac{n-2}{2}$, $d_{K_n[M]}(v)\geq \frac{n}{2}$ and
$|M|=\frac{3n-6}{2}$, $G[S']$ is a graph obtained from the complete
graph $K_{n-2}$ by deleting at most $\frac{n-4}{2}$ edges and hence
$\delta(G[S'])\geq n-3-\frac{n-4}{2}=\frac{n-2}{2}$. It is clear
that there exist at least two vertices of degree $n-3$ and there is
also at most one vertex of degree $\frac{n-2}{2}$ in $G[S']$.
Without loss of generality, let $u_{i_1},u_{i_2}$ be two vertices of
degree $n-3$.

\begin{figure}[h,t,b,p]
\begin{center}
\scalebox{0.8}[0.8]{\includegraphics{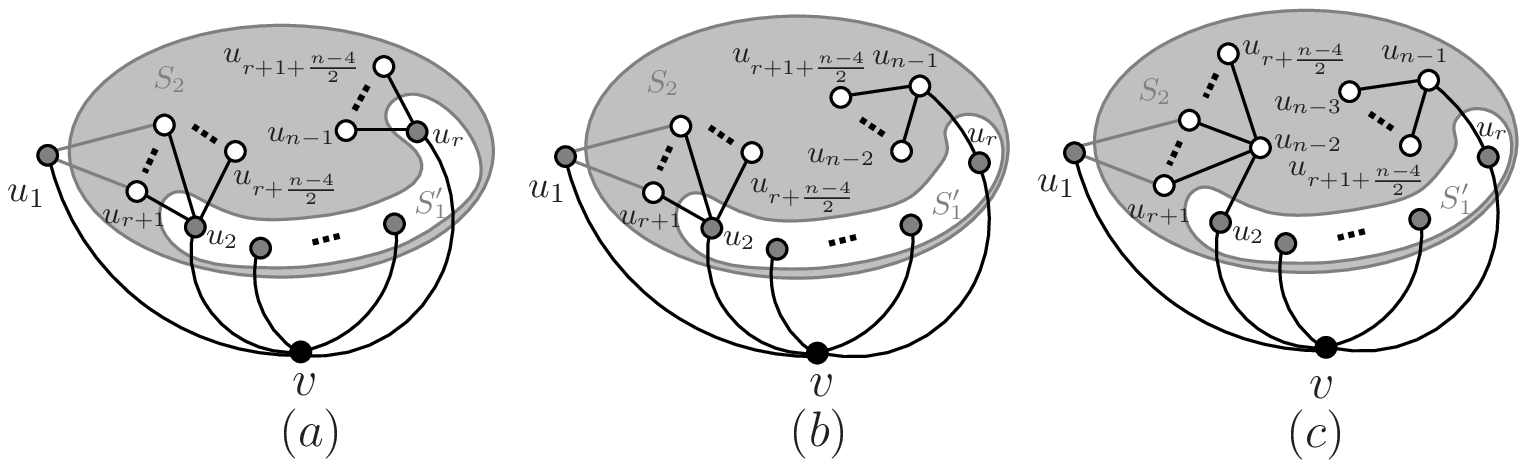}}\\
Figure 2. Graphs for Case $1$ of Lemma \ref{lem9}.
\end{center}
\end{figure}

If $u_{i_1},u_{i_2}\in S_1'$, without loss of generality, let
$u_{i_1}=u_2$ and $u_{i_2}=u_{r}$, then the tree $T=vu_1\cup
\cdots\cup vu_{r}\cup u_2u_{r+1}\cup \cdots\cup
u_2u_{r+\frac{n-4}{2}}\cup u_{r}u_{r+\frac{n-4}{2}+1}\cup \cdots\cup
u_{r}u_{n-1}$ is a Steiner tree connecting $S$; see Figure 2 $(a)$.
Set $G_1=G\setminus E(T)$. Clearly, $d_{G_1[S]}(u_1)=
\frac{n-2}{2}$, $d_{G_1[S]}(u_2)\geq n-3-\frac{n-4}{2}\geq
\frac{n-2}{2}$ and
$d_{G_1[S]}(u_{r})=(n-3)-(n-1-r-\frac{n-4}{2})=r-2+\frac{n-4}{2}\geq
\frac{n-2}{2}$. For $u_i\in S_2$ ($r+1\leq i\leq n-1$),
$d_{G_1[S]}(u_{i})\geq \frac{n-4}{2}$ and there is at most one
vertex of degree $\frac{n-4}{2}$ in $G_1[S]$. So $\delta(G_1[S])\geq
\frac{n-4}{2}$ and there is at most one vertex of degree
$\frac{n-4}{2}$ in $G_1[S]$, as desired. If $u_{i_1}\in S_1'$ and
$u_{i_2}\in S_2$, without loss of generality, let $u_{i_1}=u_2$ and
$u_{i_2}=u_{n-1}$, then the tree $T=vu_1\cup \cdots\cup vu_{r}\cup
u_2u_{r+1}\cup \cdots\cup u_2u_{r+\frac{n-4}{2}}\cup
u_{n-1}u_{r+\frac{n-4}{2}+1}\cup \cdots\cup u_{n-1}u_{n-2}\cup
u_{n-1}u_{r}$ is our desired tree; see Figure 2 $(b)$. Set
$G_1=G\setminus E(T)$. One can see that $\delta(G_1[S])\geq
\frac{n-4}{2}$ and there is at most one vertex of degree
$\frac{n-4}{2}$ in $G_1[S]$, as desired. Let us consider the
remaining case $u_{i_1},u_{i_2}\in S_2$. Without loss of generality,
let $u_{i_1}=u_{n-1}$ and $u_{i_2}=u_{n-2}$. The tree $T=vu_1\cup
\cdots\cup vu_{r}\cup u_{n-2}u_{r+1}\cup \cdots\cup
u_{n-2}u_{r+\frac{n-4}{2}}\cup u_{n-1}u_{r+\frac{n-4}{2}+1}\cup
\cdots\cup u_{n-1}u_{n-3}\cup u_{2}u_{n-2}\cup u_{n-1}u_{r}$ is our
desired tree; see Figure 2 $(c)$. Set $G_1=G\setminus E(T)$. One can
see that $\delta(G_1[S])\geq \frac{n-4}{2}$ and there is at most one
vertex of degree $\frac{n-4}{2}$ in $G_1[S]$. Using $(2)$ of Lemma
\ref{lem7}, we can get $\overline{\kappa}_{n-1}(G)\geq
\frac{n-2}{2}$.

\textbf{Case 2}. $\frac{n}{2}\leq \delta(G)\leq n-1$.

Recall that $S_1=\{u_1,\cdots,u_{r}\}$ with $vu_i\in E(G)$ and
$S_2=S\setminus S_1= \{u_{r+1},\cdots,u_{n-1}\}$. Obviously,
$|S_1|=r=\delta(G)\geq \frac{n}{2}$ and $|S_2|=n-1-r\leq
n-1-\frac{n}{2}=\frac{n-2}{2}$. For each $u_j\in S_2 \ (r+1\leq
j\leq n-1)$, $u_j$ has at most $\frac{n-4}{2}$ neighbors in $S_2$
and hence $|E_G[u_j,S_1]|\geq \frac{n}{2}-\frac{n-4}{2}=2$ since
$d_G(u_j)\geq \delta(G)\geq \frac{n}{2}$. Clearly, the tree
$T'=vu_1\cup vu_2\cup \cdots\cup vu_{r}$ is a Steiner tree
connecting $S_1$. Our idea is to seek for $n-1-r$ edges in
$E_G[S_1,S_2]$ and combine them with $T'$ to form a Steiner tree
connecting $S$. We employ the method used in Case $2$ of Lemma
\ref{lem8}. Choose the one with the smallest subscript among all the
maximum degree vertices in $S_2$, say $u_{1}'$. Then we search for
the vertex adjacent to $u_1'$ with the smallest subscript among all
the maximum degree vertices in $S_1$, say $u_1''$. Let
$e_1=u_1'u_1''$. Consider the graph $G_1=G\setminus e_1$. Pick up
the one with the smallest subscript among all the maximum degree
vertices in $S_2\setminus u_{1}'$, say $u_{2}'$. Then we search for
the vertex adjacent to $u_2'$ with the smallest subscript among all
the maximum degree vertices in $S_1$, say $u_2''$. Set
$e_2=u_2'u_2''$. We consider the graph $G_2=G_1\setminus
e_1=G\setminus \{e_1,e_2\}$. For each $u_i\in S_2 \ (r+1\leq i\leq
n-1)$, we proceed to find $e_3,e_4,\cdots,e_{n-1-r}$ in the same
way. Let $M'=\{e_1,e_2,\cdots,e_{n-1-r}\}$ and $G_{n-1-r}=G\setminus
M'$. Then $G_{n-1-r}[S]=G[S]\setminus M'$ and the tree $T=vu_{1}\cup
vu_{2}\cup \cdots \cup vu_{r}\cup e_1\cup e_2\cup \cdots \cup
e_{n-1-r}$ is our desired tree. Set $G'=G\setminus E(T)$ (note that
$G'[S]=G_{n-1-r}[S]$).

\noindent {\bf Claim 2}. For each $u_j\in S_1 \ (1\leq j\leq r)$,
$d_{G'[S]}(u_j)\geq \frac{n-4}{2}$ and there exists at most one
vertex of degree $\frac{n-4}{2}$ in $G'[S]$.

\noindent{\itshape Proof of Claim $2$.} First, we prove that for
each $u_j\in S_1 \ (1\leq j\leq r)$, $d_{G'[S]}(u_j)\geq
\frac{n-4}{2}$. Assume, to the contrary, that there exists one
vertex $u_p\in S_1$ such that $d_{G'[S]}(u_p)\leq \frac{n-6}{2}$. By
the above procedure, there exists a vertex $u_q\in S_2$ such that
when we pick up the edge $e_i=u_pu_q$ from $G_{i-1}[S]$ the degree
of $u_p$ in $G_{i}[S]$ is equal to $\frac{n-6}{2}$. That is
$d_{G_{i}[S]}(u_p)=\frac{n-6}{2}$ and
$d_{G_{i-1}[S]}(u_p)=\frac{n-4}{2}$. From our procedure,
$|E_G[u_q,S_1]|=|E_{G_{i-1}}[u_q,S_1]|$. Without loss of generality,
let $|E_G[u_q,S_1]|=t$ and $u_qu_j\in E(G)$ for $1\leq j\leq t$; see
Figure 1 $(b)$. Thus $u_p\in \{u_1,u_2,\cdots,u_t\}$. Recall that
$|E_G[u_j,S_1]|\geq 2$ for each $u_j\in S_2 \ (r+1\leq j\leq n-1)$.
Since $u_q\in S_2$, we have $t\geq 2$. Clearly, $u_qu_j\notin E(G)$
and hence $u_qu_j\in M$ for $t+1\leq j\leq r$ by our procedure,
namely, $|E_{K_n[M]}[u_q,S_1]|=r-t$. Since
$d_{G_{i-1}[S]}(u_p)=\frac{n-4}{2}$, by our procedure
$d_{G_{i-1}[S]}(u_j)\leq \frac{n-4}{2}$ for each $u_j\in S_1 \
(1\leq j\leq t)$. Assume, to the contrary, that there is a vertex
$u_s \ (1\leq s\leq t)$ such that $d_{G_{i-1}[S]}(u_s)\geq
\frac{n-2}{2}$. Then we should choose the edge $u_qu_s$ instead of
$e_i=u_qu_p$ by our procedure, a contradiction. We conclude that
$d_{G_{i-1}[S]}(u_j)\leq \frac{n-4}{2}$ for each $u_j\in S_1 \
(1\leq i\leq t)$. Clearly, there are at least $n-2-\frac{n-4}{2}$
edges incident to each $u_j  \ (1\leq j\leq t)$ that belong to
$M\cup \{e_1,e_2,\cdots,e_{i-1}\}$. Since $i\leq n-1-r$, we have
$\sum_{j=1}^pd_{K_n[M]}(u_j)\geq (n-2-\frac{n-4}{2})t-(i-1)\geq
\frac{n}{2}t-(n-2-r)$ and hence $|M|\geq
d_{K_n[M]}(v)+\sum_{j=1}^td_{K_n[M]}(u_j)+|E_{K_n[M]}[u_q,S_1]|\geq
(n-1-r)+\frac{n}{2}t-(n-2-r)+(r-t)=r+1+\frac{n-2}{2}t\geq
\frac{n}{2}+1+\frac{2(n-2)}{2}=\frac{3n-2}{2}$, which contradicts to
$|M|=\frac{3n-6}{2}$.

Next, we consider to prove that there exists at most one vertex of
degree $\frac{n-4}{2}$ in $G'[S]$. Assume, to the contrary, that
there exist two vertices of degree $\frac{n-4}{2}$ in $G'[S]$, say
$u_{p'},u_{p}$. By the above procedure, there exists a vertex
$u_{q'}\in S_2$ such that when we pick up the edge
$e_{i'}=u_{p'}u_{q'}$ from $G_{i'-1}[S]$ the degree of $u_p$ in
$G_{i'}[S]$ is equal to $\frac{n-4}{2}$, that is
$d_{G_{i'}[S]}(u_{p'})=\frac{n-4}{2}$. By the same reason, there
exists a vertex $u_q\in S_2$ such that when we pick up the edge
$e_i=u_pu_q$ from $G_{i-1}[S]$ the degree of $u_p$ in $G_{i}[S]$ is
equal to $\frac{n-4}{2}$, that is, $d_{G_{i}[S]}(u_p)=\frac{n-4}{2}$
and $d_{G_{i-1}[S]}(u_p)=\frac{n-2}{2}$. Without loss of generality,
let $i'<i$. From our procedure,
$|E_G[u_q,S_1]|=|E_{G_{i-1}}[u_q,S_1]|$. Without loss of generality,
let $|E_G[u_q,S_1]|=t$ and $u_qu_j\in E(G)$ for $1\leq j\leq t$; see
Figure 1 $(b)$. Thus $u_p\in \{u_1,u_2,\cdots,u_t\}$. Recall that
$|E_G[u_j,S_1]|\geq 2$ for each $u_j\in S_2 \ (r+1\leq j\leq n-1)$.
Since $u_q\in S_2$, we have $t\geq 2$. Then $u_qu_j\notin E(G)$ and
hence $u_qu_j\in M$ for $t+1\leq j\leq r$ by our procedure, namely,
$|E_{K_n[M]}[u_q,S_1]|=r-t$. Since
$d_{G_{i-1}[S]}(u_p)=\frac{n-2}{2}$, by our procedure
$d_{G_{i-1}[S]}(u_j)\leq \frac{n-2}{2}$ for each $u_j\in S_1 \
(1\leq j\leq t)$. Assume, to the contrary, that there is a vertex
$u_s \ (1\leq s\leq t)$ such that $d_{G_{i-1}[S]}(u_s)\geq
\frac{n}{2}$. Then we should choose the edge $u_qu_s$ instead of
$e_i=u_qu_p$ by our procedure, a contradiction. We conclude that
$d_{G_{i-1}[S]}(u_j)\leq \frac{n-2}{2}$ for each $u_j\in S_1 \
(1\leq i\leq t)$. If $u_{p'}\in \{u_1,\cdots,u_t\}$, without loss of
generality, let $u_{p'}=u_1$, then
$d_{K_n[M]}(u_1)+\sum_{j=2}^td_{K_n[M]}(u_j)\geq
(n-2-d_{G_{i-1}[S]}(u_1))+(n-2-\frac{n-2}{2})(t-1)-(i-1)\geq
(n-2-d_{G_{i'}[S]}(u_1))+\frac{n-2}{2}(t-1)-(i-1)\geq
(n-2-\frac{n-4}{2})+\frac{n-2}{2}(t-1)-(n-2-r)=\frac{n-2}{2}t-n+3+r$
since $i\leq n-1-r$. Since $t\geq 2$ and $r\geq \frac{n}{2}$, we
have $|M|\geq
d_{K_n[M]}(v)+d_{K_n[M]}(u_1)+\sum_{j=2}^td_{K_n[M]}(u_j)+|E_{K_n[M]}[u_q,S_1]|\geq
(n-1-r)+(\frac{n-2}{2}t-n+3+r)+(r-t)=\frac{n-4}{2}t+r+2\geq
\frac{2(n-4)}{2}+\frac{n}{2}+2\geq \frac{3n-4}{2}$, which
contradicts to $|M|=\frac{3n-6}{2}$. If $u_{p'}\notin
\{u_1,\cdots,u_t\}$, then $u_{p'}\in \{u_{t+1},\cdots,u_r\}$ and
$d_{K_n[M]}(u_{p'})+\sum_{j=1}^td_{K_n[M]}(u_j)\geq
(n-2-d_{G_{i-1}[S]}(u_{p'}))+(n-2-\frac{n-2}{2})t-(i-1)\geq
(n-2-d_{G_{i'}[S]}(u_{p'}))+\frac{n-2}{2}t-(i-1)\geq
(n-2-\frac{n-4}{2})+\frac{n-2}{2}t-(n-2-r)=\frac{n-2}{2}(t+1)-n+3+r$
since $i\leq n-1-r$. Since $t\geq 2$ and $r\geq \frac{n}{2}$, we
have $|M|\geq
d_{K_n[M]}(v)+d_{K_n[M]}(u_{p'})+\sum_{j=1}^pd_{K_n[M]}(u_j)+(|E_{K_n[M]}[u_q,S_1]|-1)\geq
(n-1-r)+\frac{n-2}{2}(t+1)-n+3+r+(r-1-t)=r+1+\frac{n-4}{2}t+\frac{n-2}{2}\geq
\frac{n}{2}+1+\frac{2(n-4)}{2}+\frac{n-2}{2}=2n-4$, which
contradicts to $|M|=\frac{3n-6}{2}$. The proof of this claim is
complete.

From Claim $2$, $d_{G'[S]}(u_j)\geq \frac{n-4}{2}$ for each $u_j\in
S_1 \ (1\leq i\leq r)$ and and there exists at most one vertex of
degree $\frac{n-4}{2}$ in $G'[S]$. For each $u_j\in S_2 \ (r+1\leq
j\leq n-1)$, $d_{G'[S]}(u_j)=d_{G[S]}(u_j)-1=d_{G}(u_j)-1\geq
\delta(G)-1\geq \frac{n-2}{2}$. So $\delta(G'[S])\geq \frac{n-4}{2}$
and there exists at most one vertex of degree $\frac{n-4}{2}$ in
$G'[S]$. Combining this with
$e(G'[S])=e(G)-(n-1)={{n-2}\choose{2}}-\frac{n-2}{2}$, $G'[S]$
contains $\frac{n-4}{2}$ spanning trees by $(2)$ of Lemma
\ref{lem7}. These trees together with the tree $T$ are
$\frac{n-2}{2}$ trees connecting $S$, namely,
$\overline{\kappa}_{n-1}(G)\geq \frac{n-2}{2}$.
\end{proof}

\begin{proposition}\label{pro3}
For a connected graph $G$ of order $n\ (n\geq 11)$,
$\overline{\kappa}_{n-1}(G)=\lfloor\frac{n-1}{2}\rfloor$ if and only
if $G=K_n\setminus M$ and $M\subseteq E(K_n)$ satisfies one of the
following conditions:

$\bullet$ $1\leq |M|\leq n-2$ for $n$ odd;

$\bullet$ $\frac{n}{2}\leq |M|\leq n$ for $n$ even;

$\bullet$ $n+1\leq |M|\leq \frac{3n-6}{2}$ and $d_G(u_1)\geq
\frac{n-2}{2}$ where $u_1$ is a second minimal degree vertex in $G$
for $n$ even.
\end{proposition}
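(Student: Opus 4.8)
The plan is to peel the biconditional apart using the identity $\lfloor\frac{n-1}{2}\rfloor=\lfloor\frac{n+1}{2}\rfloor-1$ together with Observation~\ref{obs4}, which says $\overline{\kappa}_{n-1}(G)$ is an integer in $[1,\lfloor\frac{n+1}{2}\rfloor]$. Writing $G=K_n\setminus M$, Proposition~\ref{pro1} already tells us exactly when $\overline{\kappa}_{n-1}(G)=\lfloor\frac{n+1}{2}\rfloor$, namely $M=\varnothing$ for $n$ odd and $|M|\le\frac{n-2}{2}$ for $n$ even. So as soon as we know $|M|\ge1$ (for $n$ odd) or $|M|\ge\frac{n}{2}$ (for $n$ even), the upper bound $\overline{\kappa}_{n-1}(G)\le\lfloor\frac{n-1}{2}\rfloor$ is automatic; since this upper bound is needed on both sides, the proposition reduces to the two implications: (i) if $M$ lies in one of the three displayed regimes then $\overline{\kappa}_{n-1}(G)\ge\lfloor\frac{n-1}{2}\rfloor$; and (ii) if $\overline{\kappa}_{n-1}(G)=\lfloor\frac{n-1}{2}\rfloor$ then $M$ lies in one of them (in particular $|M|\ge1$ resp.\ $\frac{n}{2}$, since otherwise $\overline{\kappa}_{n-1}(G)=\lfloor\frac{n+1}{2}\rfloor$).

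For (ii) I would argue by the contrapositive of Lemma~\ref{lem6} combined with Observation~\ref{obs1}. For $n$ odd: if $|M|\ge n-1$, Lemma~\ref{lem6}(3) gives $\overline{\kappa}_{n-1}(G)\le\overline{\lambda}_{n-1}(G)<\frac{n-1}{2}$, a contradiction, so $1\le|M|\le n-2$. For $n$ even: if $|M|\ge\frac{3n-4}{2}$, Lemma~\ref{lem6}(1) gives a contradiction, so $\frac{n}{2}\le|M|\le\frac{3n-6}{2}$; and if moreover $n+1\le|M|\le\frac{3n-6}{2}$ while the second minimal degree is $\le\frac{n-4}{2}$, Lemma~\ref{lem6}(2) gives $\overline{\lambda}_{n-1}(G)<\frac{n-2}{2}$, again a contradiction. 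Hence for $n$ even either $\frac{n}{2}\le|M|\le n$, or $n+1\le|M|\le\frac{3n-6}{2}$ with second minimal degree $\ge\frac{n-2}{2}$ — exactly the stated alternatives.

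The substance is implication (i). For $n$ odd with $1\le|M|\le n-2$: enlarge $M$ to $M''\supseteq M$ with $|M''|=n-2$; then $H=K_n\setminus M''$ is still connected (one cannot disconnect $K_n$ by deleting fewer than $n-1$ edges), so Lemma~\ref{lem8} yields $\overline{\kappa}_{n-1}(H)\ge\frac{n-1}{2}$, and since $H$ is a spanning subgraph of $G$, Observation~\ref{obs2} lifts this to $\overline{\kappa}_{n-1}(G)\ge\frac{n-1}{2}$. For $n$ even with $n+1\le|M|\le\frac{3n-6}{2}$ and second minimal degree $\ge\frac{n-2}{2}$: if $|M|=\frac{3n-6}{2}$ apply Lemma~\ref{lem9} directly; otherwise enlarge $M$ to size $\frac{3n-6}{2}$ by inserting the extra non-edges at the (unique) minimum degree vertex, so that no other vertex loses degree — this keeps the graph connected and the second minimal degree $\ge\frac{n-2}{2}$ — and then invoke Lemma~\ref{lem9} and Observation~\ref{obs2}. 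For $n$ even with $\frac{n}{2}\le|M|\le n$ and $\delta(G)\le\frac{n-4}{2}$: take a minimum degree vertex $v$ and put $S=V(G)\setminus v$; then $G[S]=K_{n-1}\setminus M_S$ with $|M_S|=|M|-(n-1-\delta(G))\le\frac{n-2}{2}$, so by Lemma~\ref{lem3} (with $k=n-1$, which is odd) $G[S]$ carries $\frac{n-2}{2}$ internally disjoint $S$-trees, none using $v$, whence $\overline{\kappa}_{n-1}(G)\ge\frac{n-2}{2}$.

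The remaining case — $n$ even, $\frac{n}{2}\le|M|\le n$, $\delta(G)\ge\frac{n-2}{2}$ — is where I expect the main difficulty, and it has to be handled by a direct construction modeled on Lemmas~\ref{lem8} and~\ref{lem9}. With $v$ of minimum degree and $S=V(G)\setminus v$, one builds a single $S$-tree $T$ rooted at $v$ — from a Hamilton cycle of $G[S]$ (which exists by Theorem~\ref{th3} once $\delta(G[S])\ge\frac{n-1}{2}$) or from well-chosen edges between $N_G(v)$ and $S\setminus N_G(v)$ otherwise — selected so that $G[S]\setminus E(T)$ still has at least $\binom{n-2}{2}-\frac{n-2}{2}$ edges, minimum degree $\ge\frac{n-4}{2}$, and at most one vertex of degree $\frac{n-4}{2}$; then Lemma~\ref{lem7}(2) supplies $\frac{n-4}{2}$ edge-disjoint spanning trees there, and $T$ together with these is the desired family. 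The hard sub-case is $\delta(G)=\frac{n-2}{2}$: then $G[S]$ need not be Hamiltonian, the $E_G[N_G(v),S\setminus N_G(v)]$ trick needs $\delta(G)\ge\frac{n}{2}$ and so can fail, and $|M_S|$ may be as large as $\frac{n}{2}>\frac{n-2}{2}$, so Lemma~\ref{lem3} no longer applies to $G[S]$; one must instead exploit that $|M|\le n$ forces only a bounded number of low-degree vertices, and choose $T$ so as to pull every prospective degree-$\frac{n-4}{2}$ vertex of $G[S]\setminus E(T)$ back up while leaving at most one such vertex — a more delicate version of Claim~1 of Lemma~\ref{lem8} and Claim~2 of Lemma~\ref{lem9}. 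The secondary obstacle is the bookkeeping in the reductions: one must check that the enlargements keep the graph connected, not merely that they give the right edge count and degree sequence, which is why the new non-edges are placed at the minimum degree vertex and it is left with positive degree attached to the rest.
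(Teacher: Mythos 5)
Your forward direction and your odd-$n$ converse match the paper: Proposition \ref{pro1} plus Lemma \ref{lem6} pin down the admissible $M$, and for $n$ odd one enlarges $M$ to size $n-2$ and invokes Lemma \ref{lem8} together with Observation \ref{obs2}. Your treatment of $n$ even, $\frac{n}{2}\leq|M|\leq n$ with $\delta(G)\leq\frac{n-4}{2}$ via Lemma \ref{lem3} applied to $G[S]=K_{n-1}\setminus M_S$ is clean and correct. But the proposal has a genuine gap: the sub-case $n$ even, $\frac{n}{2}\leq|M|\leq n$, $\delta(G)\geq\frac{n-2}{2}$ is not proved. You identify it as ``the hard sub-case,'' observe correctly that Theorem \ref{th3}, the $E_G[N_G(v),S\setminus N_G(v)]$ device, and Lemma \ref{lem3} can all fail there, and then describe what a proof ``must'' do; that is a statement of the difficulty, not an argument. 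The paper sidesteps this case entirely with a short double-counting observation you missed: if $\frac{n}{2}\leq|M|\leq n$ and the second minimal degree were at most $\frac{n-4}{2}$, then two vertices would each meet at least $\frac{n+2}{2}$ edges of $M$, forcing $|M|\geq n+1$. Hence $d_G(u_1)\geq\frac{n-2}{2}$ holds automatically in this regime, which folds it into the hypothesis of Lemma \ref{lem9}, and the whole even case reduces (by monotonicity) to $|M|=\frac{3n-6}{2}$ with the degree condition, settled by Lemma \ref{lem9}.

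A second, smaller problem is your enlargement recipe for $n+1\leq|M|\leq\frac{3n-6}{2}$: adding all the extra non-edges at the minimum degree vertex $v$ does \emph{not} leave the other degrees intact --- each deleted edge $vw$ lowers $d(w)$ by one, so if some $w$ has degree exactly $\frac{n-2}{2}$ you create a second vertex below the threshold; moreover $v$ may not have $\frac{3n-6}{2}-|M|+1$ incident edges left to spare (e.g.\ $d_G(v)=1$ is possible once $|M|\geq n-2$), in which case the recipe cannot even be executed. The enlargement can be repaired by distributing the new non-edges among vertices of large degree, but as written your justification is incorrect. (To be fair, the paper also asserts the reduction to $|M|=\frac{3n-6}{2}$ without verifying that the enlargement preserves connectivity and the second-minimal-degree condition; you at least flag the issue, but your proposed fix does not work.)
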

\begin{proof}
For $n$ odd, if $G$ is a connected graph of order $n$ such that
$\overline{\kappa}_{n-1}(G)=\frac{n-1}{2}$, then we can consider $G$
as the graph obtained from a complete graph $K_{n}$ by deleting some
edges. Set $G=K_n\setminus M$ where $M\subseteq E(K_n)$. From
Proposition \ref{pro1}, $|M|\geq 1$. Combining this with $(3)$ of
Lemma \ref{lem6}, $1\leq |M|\leq n-2$. For $n$ even, if $G$ is a
connected graph of order $n$ such that
$\overline{\kappa}_{n-1}(G)=\frac{n-2}{2}$, then we let
$G=K_n\setminus M$, where $M\subseteq E(K_n)$. From Proposition
\ref{pro1}, $|M|\geq \frac{n}{2}$. Combining this with $(1)$ of
Lemma \ref{lem6}, $\frac{n}{2}\leq |M|\leq \frac{3n-6}{2}$.
Furthermore, for $n+1\leq |M|\leq \frac{3n-6}{2}$ we have
$d_G(u_1)\geq \frac{n-2}{2}$ by $(2)$ of Lemma \ref{lem6}, where
$u_1$ is a second minimal degree vertex. So $\frac{n}{2}\leq |M|\leq
n$, or $n+1\leq |M|\leq \frac{3n-6}{2}$ and $d_G(u_1)\geq
\frac{n-2}{2}$.

Conversely, assume that $G$ is a graph satisfying one of the
conditions of this proposition. Then we will show
$\overline{\kappa}_{n-1}(G)=\lfloor\frac{n-1}{2}\rfloor$. For $n$
odd, $G=K_n\setminus M$ and $M\subseteq E(K_n)$ such that $1\leq
|M|\leq n-2$. In fact, we only need to show that
$\overline{\kappa}_{n-1}(G)\geq \lfloor\frac{n-1}{2}\rfloor$ for
$|M|=n-2$. It follows by Lemma \ref{lem8}. Combining with
Proposition \ref{pro1}, $\overline{\kappa}_{n-1}(G)=
\lfloor\frac{n-1}{2}\rfloor$. For $n$ even, $G=K_n\setminus M$ and
$M\subseteq E(K_n)$ such that $\frac{n}{2}\leq |M|\leq n$, or
$n+1\leq |M|\leq \frac{3n-6}{2}$ and $d_G(u_1)\geq \frac{n-2}{2}$
where $u_1$ is a second minimal degree vertex. Actually, for
$\frac{n}{2}\leq |M|\leq n$, we claim that $d_G(u_1)\geq
\frac{n-2}{2}$, where $u_1$ is a second minimal degree vertex.
Otherwise, let $d_G(u_1)\leq \frac{n-4}{2}$. Let $v$ be the vertex
such that $d_G(v)=\delta(G)$. From the definition of the second
minimal degree vertex, $d_G(v)\leq d_G(u_1)\leq \frac{n-4}{2}$ and
hence $d_{K_n[M]}(v)\geq d_{K_n[M]}(u_1)\geq
n-1-\frac{n-4}{2}=\frac{n+2}{2}$. Therefore, $|M|\geq
d_{K_n[M]}(v)+d_{K_n[M]}(u_1)\geq n+2$, a contradiction. So we only
need to show that $\overline{\kappa}_{n-1}(G)\geq
\lfloor\frac{n-1}{2}\rfloor$ for $|M|=\frac{3n-6}{2}$ and
$d_G(u_1)\geq \frac{n-2}{2}$ where $u_1$ is a second minimal degree
vertex. It follows by Lemma \ref{lem9}. From this together with
Proposition \ref{pro1}, $\overline{\kappa}_{n-1}(G)=
\lfloor\frac{n-1}{2}\rfloor$.
\end{proof}

\begin{proposition}\label{pro4}
For a connected graph $G$ of order $n \ (n\geq 11)$,
$\overline{\lambda}_{n-1}(G)=\lfloor\frac{n-1}{2}\rfloor$ if and
only if $G=K_n\setminus M$ and $M\subseteq E(K_n)$ satisfies one of
the following conditions.

$\bullet$ $1\leq |M|\leq n-2$ for $n$ odd;

$\bullet$ $\frac{n}{2}\leq |M|\leq n$ for $n$ even;

$\bullet$ $n+1\leq |M|\leq \frac{3n-6}{2}$ and $d_G(u_1)\geq
\frac{n-2}{2}$ where $u_1$ is a second minimal degree vertex in $G$
for $n$ even.
\end{proposition}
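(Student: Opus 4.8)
The plan is to reuse, essentially verbatim, the strategy of Proposition~\ref{pro2}, but now feeding in the $(n-1)$-specific inequalities of Lemmas~\ref{lem5} and~\ref{lem6} together with the already-established $\kappa$-version, Proposition~\ref{pro3}. The backbone is the sandwich $\overline{\kappa}_{n-1}(G)\le \overline{\lambda}_{n-1}(G)\le \lfloor\frac{n+1}{2}\rfloor$ supplied by Observations~\ref{obs1}$(1)$ and~\ref{obs3}, which lets me pass information back and forth between the edge and vertex versions. Since every graph on $n$ vertices can be written $G=K_n\setminus M$, the only real content is to pin down the admissible sizes of $M$ (and, for $n$ even, the degree condition on a second minimal degree vertex).

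For the sufficiency direction, suppose $G=K_n\setminus M$ with $M$ satisfying one of the three listed conditions. Each of them forces $|M|\ge 1$ when $n$ is odd and $|M|\ge \frac n2$ when $n$ is even, so Lemma~\ref{lem5} yields $\overline{\lambda}_{n-1}(G)<\lfloor\frac{n+1}{2}\rfloor$; as $\overline{\lambda}_{n-1}(G)$ is an integer this gives $\overline{\lambda}_{n-1}(G)\le \lfloor\frac{n-1}{2}\rfloor$. On the other hand these are exactly the hypotheses of Proposition~\ref{pro3}, so $\overline{\kappa}_{n-1}(G)=\lfloor\frac{n-1}{2}\rfloor$, and Observation~\ref{obs1}$(1)$ gives $\overline{\lambda}_{n-1}(G)\ge \overline{\kappa}_{n-1}(G)=\lfloor\frac{n-1}{2}\rfloor$. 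The two inequalities combine to the desired equality.

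For the necessity direction, assume $\overline{\lambda}_{n-1}(G)=\lfloor\frac{n-1}{2}\rfloor$ and write $G=K_n\setminus M$. Since this value lies strictly below $\lfloor\frac{n+1}{2}\rfloor$, Proposition~\ref{pro2} excludes the small-$|M|$ regime, so $|M|\ge 1$ for $n$ odd and $|M|\ge \frac n2$ for $n$ even. To bound $|M|$ from above I invoke Lemma~\ref{lem6}: part~$(3)$ forces $|M|\le n-2$ when $n$ is odd, since $|M|\ge n-1$ would give $\overline{\lambda}_{n-1}(G)<\frac{n-1}{2}$; and for $n$ even, part~$(1)$ forces $|M|\le \frac{3n-6}{2}$, while part~$(2)$ forces, whenever $n+1\le |M|\le \frac{3n-6}{2}$, that a second minimal degree vertex $u_1$ satisfy $d_G(u_1)\ge \frac{n-2}{2}$ (otherwise $\overline{\lambda}_{n-1}(G)$ would drop below $\frac{n-2}{2}$). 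Thus $M$ falls into exactly one of the three stated cases.

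There is no genuine obstacle left once Proposition~\ref{pro3} and Lemmas~\ref{lem5},~\ref{lem6} are in hand; the only care needed is bookkeeping — matching the floor expressions across the odd and even cases, and recalling (as was already noted in the proof of Proposition~\ref{pro3}) that in the subrange $\frac n2\le |M|\le n$ the extra condition $d_G(u_1)\ge \frac{n-2}{2}$ is automatic, since a smaller $d_G(u_1)$ would force $d_{K_n[M]}(v)+d_{K_n[M]}(u_1)$, and hence $|M|$, to exceed $n$. The hypothesis $n\ge 11$ is inherited from Proposition~\ref{pro3} and comfortably covers the size restrictions of every lemma used here.
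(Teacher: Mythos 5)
Your proof is correct and takes essentially the same route as the paper's: the lower bound comes from Proposition~\ref{pro3} via Observation~\ref{obs1}, the matching upper bound from Lemma~\ref{lem5} (equivalently Proposition~\ref{pro2}), and the converse from Proposition~\ref{pro2} together with the three parts of Lemma~\ref{lem6}. If anything, you are slightly more explicit than the paper in justifying the lower bounds on $|M|$ in the necessity direction, which the paper leaves implicit.
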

\begin{proof}
Assume that $G$ is a connected graph satisfying the conditions of
Proposition \ref{pro4}. From Observation \ref{obs1} and Proposition
\ref{pro3}, it follows that $\overline{\lambda}_{n-1}(G)\geq
\overline{\kappa}_{n-1}(G)=\lfloor\frac{n-1}{2}\rfloor$. Combining
this with Proposition \ref{pro2},
$\overline{\lambda}_{n-1}(G)=\lfloor\frac{n-1}{2}\rfloor$.
Conversely, if
$\overline{\lambda}_{n-1}(G)=\lfloor\frac{n-1}{2}\rfloor$, then from
Lemma \ref{lem6} we have $G=K_n\setminus M$ for $n$ odd,  where $M$
is an edge set such that $1\leq |M|\leq n-2$; $G=K_n\setminus M$ for
$n$ even, where $M$ is an edge set such that $\frac{n}{2}\leq
|M|\leq n$, or $n+1\leq |M|\leq \frac{3n-6}{2}$ and $d_G(u_1)\geq
\frac{n-2}{2}$.
\end{proof}

\subsection{The subcase $1\leq \ell\leq
\lfloor\frac{n-5}{2}\rfloor$}

Now we consider the case $1\leq \ell\leq
\lfloor\frac{n-5}{2}\rfloor$.

\begin{lemma}\label{lem10}
Let $H$ is a connected graph of order $n-1\ (n\geq 12)$. If $e(H)=
{{n-2}\choose{2}}+2\ell-(n-1) \ (1\leq \ell\leq
\lfloor\frac{n-5}{2}\rfloor)$ and $\delta(H)\geq \ell$ and any two
vertices of degree $\ell$ are nonadjacent, then $H$ contains $\ell$
edge-disjoint spanning trees.
\end{lemma}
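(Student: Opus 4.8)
The plan is to apply the Nash--Williams--Tutte theorem (Theorem~\ref{th1}): it suffices to show that for every partition $\mathscr{P}=\bigcup_{i=1}^{p}V_i$ of $V(H)$, writing $|V_i|=n_i$ and letting $\mathcal{E}_p$ be the set of edges of $H$ joining distinct blocks, one has $|\mathcal{E}_p|\ge \ell(p-1)$. The case $p=1$ is vacuous. For the ``interior'' range $p\ge 4$ I would use only the crude estimate $|\mathcal{E}_p|\ge e(H)-\sum_{i}{n_i\choose 2}$ together with the convexity fact that, over all partitions of an $(n-1)$-element set into $p$ blocks, $\sum_i{n_i\choose 2}$ is largest for $p-1$ singletons and one block of size $n-p$, so $|\mathcal{E}_p|\ge e(H)-{n-p\choose 2}$. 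Substituting $e(H)={n-2\choose 2}+2\ell-(n-1)$ and simplifying, the inequality $e(H)-{n-p\choose 2}\ge\ell(p-1)$ becomes, after setting $\ell$ to its largest admissible value $\frac{n-5}{2}$ (the worst case, since the $\ell$-coefficient of $e(H)-{n-p\choose2}-\ell(p-1)$ equals $3-p<0$), the polynomial inequality $n(p-3)\ge p^{2}-4p+7$, i.e. $n\ge p-1+\frac{4}{p-3}$. Since $p-1+\frac{4}{p-3}$ is convex in $p$, over $4\le p\le n-1$ it is maximized at an endpoint, and it is $<n$ for $n\ge 12$; this settles all $p\ge 4$ simultaneously.

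The crude estimate is too weak exactly for $p\in\{2,3\}$, and this is the heart of the matter: here the hypotheses $\delta(H)\ge\ell$ and ``no two vertices of degree $\ell$ are adjacent'' are indispensable, so I expect these small-$p$ cases to be the main obstacle. For $p=2$: if some block is a single vertex, then $|\mathcal{E}_2|\ge\delta(H)\ge\ell$; if some block has exactly two vertices, say $V_1=\{x,y\}$, then $|\mathcal{E}_2|=d_H(x)+d_H(y)-2\,[xy\in E(H)]$, which is $\ge 2\ell\ge\ell$ when $xy\notin E(H)$ and $\ge(\ell+1)+\ell-2=2\ell-1\ge\ell$ when $xy\in E(H)$, the nonadjacency hypothesis forcing $\max\{d_H(x),d_H(y)\}\ge\ell+1$; in the remaining subcase $3\le n_1\le n-4$ the crude estimate already gives $|\mathcal{E}_2|\ge e(H)-3-{n-4\choose 2}=n-9+2\ell\ge\ell$ for $n\ge 12$.

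Finally, for $p=3$ the only partition on which the count is tight is $(n_1,n_2,n_3)=(1,1,n-3)$; writing $V_1=\{x\}$, $V_2=\{y\}$, a direct count gives $|\mathcal{E}_3|=d_H(x)+d_H(y)-[xy\in E(H)]$, which is $\ge 2\ell$ whether or not $xy\in E(H)$, again invoking the nonadjacency hypothesis when $xy\in E(H)$. Note that the pure counting bound here only yields $e(H)-{n-3\choose 2}=2\ell-2$, so this step genuinely relies on the degree hypotheses and cannot be streamlined away. For every other $3$-block partition one has $\sum_i{n_i\choose 2}\le{n-4\choose 2}+1$, hence $|\mathcal{E}_3|\ge e(H)-{n-4\choose 2}-1=n-7+2\ell\ge 2\ell$ for $n\ge 12$. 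With $|\mathcal{E}_p|\ge\ell(|\mathscr{P}|-1)$ verified for every partition, Theorem~\ref{th1} yields $\ell$ edge-disjoint spanning trees. The only delicate part is the bookkeeping in the $p\le 3$ subcases; the rest is routine polynomial manipulation over the ranges $n\ge 12$, $1\le\ell\le\lfloor\frac{n-5}{2}\rfloor$, $4\le p\le n-1$.
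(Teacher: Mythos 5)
Your proof is correct and follows essentially the same route as the paper's: both apply the Nash--Williams--Tutte criterion, use the crude bound $|\mathcal{E}_p|\ge e(H)-\binom{n-p}{2}$ for $p\ge 4$ (the paper just splits off $p=n-1,n-2$ separately where you handle them uniformly via convexity), and invoke the minimum-degree and nonadjacency hypotheses exactly where you do, namely for the small-block cases when $p=2$ and $p=3$. The arithmetic in your tight cases ($n-9+2\ell\ge\ell$ for $p=2$ and $n-7+2\ell\ge 2\ell$ for $p=3$) matches the paper's.
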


\begin{proof}
Let $\mathscr{P}=\bigcup_{i=1}^pV_i$ be a partition of $V(G)$ with
$|V_i|=n_i \ (1\leq i\leq p)$, and $\mathcal {E}_p$ be the set of
edges between distinct blocks of $\mathscr{P}$ in $G$. It suffices
to show $|\mathcal {E}_p|\geq \ell (|\mathscr{P}|-1)$ so that we can
use Theorem \ref{th1}.

The case $p=1$ is trivial, thus we assume $p\geq 2$. For $p=2$, we
have $\mathscr{P}=V_1\cup V_2$. Set $|V_1|=n_1$. Then
$|V_2|=n-1-n_1$. If $n_1=1$ or $n_1=n-2$, then $|\mathcal
{E}_2|=|E_G[V_1,V_2]|\geq \ell$ since $\delta(H)\geq \ell$. If
$n_1=2$ or $n_1=n-3$, then $|\mathcal {E}_2|=|E_G[V_1,V_2]|\geq
\ell$ since $\delta(H)\geq \ell$ and any two vertices of degree
$\ell$ are nonadjacent. Suppose $3\leq n_1\leq n-4$. Then $|\mathcal
{E}_2|=|E_G[V_1,V_2]|\geq
{{n-2}\choose{2}}+2\ell-(n-1)-{{n_1}\choose{2}}-{{n-1-n_1}\choose{2}}
=-n_1^2+(n-1)n_1+2\ell-(2n-3)$. Since $3\leq n_1 \leq n-4$, one can
see that $|\mathcal {E}_2|$ attains its minimum value when $n_1=3$
or $n_1=n-4$. Thus $|\mathcal {E}_2|\geq n-9+2\ell\geq \ell$. So the
conclusion holds for $p=2$ by Theorem \ref{th1}.

Consider the case $p=3$. We will show $|\mathcal {E}_3|\geq 2\ell$.
Let $\mathscr{P}=V_1\cup V_2\cup V_3$ and $|V_i|=n_i \ (i=1,2,3)$
where $n_1+n_2+n_3=n-1$. If there are two of $n_1,n_2,n_3$ that
equals 1, say $n_1=n_2=1$, then $|\mathcal {E}_3|\geq 2\ell$ since
$\delta(H)\geq \ell$ and any two vertices of degree $\ell$ are
nonadjacent. If there is at most one of $n_1,n_2,n_3$ that equals
$1$, then we need to prove that $|\mathcal {E}_3|\geq
{{n-2}\choose{2}}+2\ell-(n-1)-\sum_{i=1}^3{{n_i}\choose{2}}\geq
2\ell$. Since $f(n_1,n_2,n_3)=\sum_{i=1}^3{{n_i}\choose{2}}$ attains
its maximum value when $n_1=1$, $n_2=2$ and $n_3=n-4$, we need the
inequality ${{n-2}\choose{2}}+2\ell-(n-1)-{{n-4}\choose{2}}-1\geq
2\ell$. Since $n\geq 12$, the inequality holds. So the conclusion
holds for $p=3$ by Theorem \ref{th1}. For $p=n-1$, we will show
$|\mathcal {E}_{n-1}|\geq \ell(n-2)$ so that we can use Theorem
\ref{th1}. That is ${{n-2}\choose{2}}+2\ell-(n-1)\geq \ell(n-2)$.
Thus we need the inequality $(n-2-2\ell)(n-4)-n\geq 0$. Since
$\ell\leq \lfloor\frac{n-5}{2}\rfloor$, the inequality holds. For
$p=n-2$, we need to prove $|\mathcal {E}_{n-2}|\geq \ell(n-3)$.
Clearly, $|\mathcal {E}_{n-2}|\geq
{{n-2}\choose{2}}+2\ell-(n-1)-1\geq \ell(n-3)$. Thus we need the
inequality $(n-2-2\ell)(n-5)-4\geq 0$. Since $\ell\leq
\lfloor\frac{n-5}{2}\rfloor$, this inequality holds.

Let us consider the remaining case $p$ with $4\leq p\leq n-4$.
Clearly, we need to prove that $|\mathcal {E}_p|\geq
{{n-2}\choose{2}}+2\ell-(n-1)-\sum_{i=1}^p{{n_i}\choose{2}}\geq
\ell(p-1)$, that is, $\frac{(n-2)(n-3)}{2}+2\ell-(n-1)-\ell p+\ell
\geq \sum_{i=1}^p{{n_i}\choose{2}}$. Since
$f(n_1,n_2,\cdots,n_p)=\sum_{i=1}^p{{n_i}\choose{2}}$ achieves its
maximum value when $n_1=n_2=\cdots=n_{p-1}=1$ and $n_p=n-p$, we need
the inequality $\frac{(n-2)(n-3)}{2}+3\ell-(n-1)-\ell p\geq
\frac{(n-p)(n-p-1)}{2}$. It is equivalent to
$(2n-2\ell-p-4)(p-3)\geq 4$. One can see that the inequality holds
since $\ell\leq \frac{n-5}{2}$ and $4\leq p\leq n-4$. From Theorem
\ref{th1}, we know that there exist $\ell$ edge-disjoint spanning
trees.
\end{proof}

\begin{lemma}\label{lem11}
Let $G$ be a connected graph of order $n \ (n\geq 12)$. If $e(G)\geq
{{n-2}\choose{2}}+2\ell  \ (1\leq \ell\leq
\lfloor\frac{n-5}{2}\rfloor)$, $\delta(G)\geq \ell+1$ and any two
vertices of degree $\ell+1$ are nonadjacent, then
$\overline{\kappa}_{n-1}(G)\geq \ell+1$.
\end{lemma}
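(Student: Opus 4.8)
The plan is to produce an $(n-1)$-set $S$ with $\kappa(S)\ge\ell+1$ by fixing one Steiner tree through a vertex outside $S$ and extracting the remaining $\ell$ trees from inside $S$. Write $G=K_n\setminus M$; from $e(G)\ge {{n-2}\choose{2}}+2\ell$ we get $|M|\le 2n-3-2\ell$, and since a vertex of $G$-degree at most $\ell+1$ misses at least $n-2-\ell\ge\frac{n+1}{2}$ edges of $K_n$, the identity $\sum_w d_M(w)=2|M|$ shows $G$ has at most four vertices of degree $\le\ell+1$; by hypothesis these are pairwise non-adjacent. Let $v$ satisfy $d_G(v)=\delta(G)=r\ (\ge\ell+1)$, put $S=V(G)\setminus v$, $S_1=N_G(v)$, $S_2=S\setminus S_1$, so $|S_2|=n-1-r$. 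Since every spanning tree of $G[S]$ is a Steiner tree for $S$, it suffices to build an $S$-Steiner tree $T$ of $G$ containing $v$ such that $H:=G[S]\setminus E(T)$ is connected with $e(H)\ge {{n-2}\choose{2}}+2\ell-(n-1)$, $\delta(H)\ge\ell$, and every two vertices of degree $\ell$ in $H$ are non-adjacent: by Lemma~\ref{lem10} (whose proof uses only this lower bound on $e(H)$) $H$ then has $\ell$ edge-disjoint spanning trees, and these together with $T$ are $\ell+1$ pairwise internally disjoint $S$-trees, since each spanning tree of $H$ has vertex set $S$, $S\subseteq V(T)$, and all the edge sets involved are pairwise disjoint. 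The edge count is automatic: such a $T$ uses exactly the $r$ edges at $v$ and $n-1-r$ edges of $G[S]$, so $e(H)=e(G)-(n-1)\ge {{n-2}\choose{2}}+2\ell-(n-1)$. So the whole task is to choose $T$ well.

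I would take $T$ to be the star from $v$ to $S_1$ together with $n-1-r$ further edges attaching the vertices of $S_2$, and split into two cases on $r=\delta(G)$, paralleling Lemmas~\ref{lem8} and~\ref{lem9}. If $r$ is large, then $|S_2|=n-1-r$ is small and each $u\in S_2$ has at least $2r-n+2\ge 2$ neighbours in $S_1$; as in Case~2 of those lemmas, I process $S_2$ one vertex at a time, each step joining a current maximum-degree vertex of $S_2$ to a maximum-degree neighbour of it in $S_1$ and updating degrees. If $r$ is small, then $G[S]=K_{n-1}\setminus M'$ with $|M'|\le |M|-(n-1-r)$ small, so $G[S]$ has a Hamilton cycle $C$ — by Theorem~\ref{th3} when $\delta(G[S])\ge\frac{n-1}{2}$, and otherwise by a Chv\'atal-type degree-sequence argument using that $G[S]$ has only a bounded number of vertices below degree $\frac{n-1}{2}$ while the rest are nearly complete — and then the vertices of $S_1$ split $C$ into $r$ arcs, from each of which I delete one edge, chosen (when the arc has length at least $2$) incident with a highest-degree endpoint, as in Case~1. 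In either case $T$ is an $S$-Steiner tree using exactly $n-1-r$ edges of $G[S]$ and $H=G[S]\setminus E(T)$ is connected.

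The main obstacle is the key claim that $\delta(H)\ge\ell$ and that the degree-$\ell$ vertices of $H$ are pairwise non-adjacent. Vertices of $S_2$ are harmless since $d_H(u)\ge d_G(u)-1\ge\ell$, and for $n\ge 12$ this is well above $\ell$ unless $u$ is one of the at most four small-degree vertices, which the greedy rules keep from losing $T$-edges; so the danger is only at low-degree vertices of $S_1$. If some $u_p\in S_1$ had $d_H(u_p)<\ell$, or two adjacent vertices of $S_1$ both had degree $\ell$ in $H$, then tracing back through the greedy step (or arc-deletion) that caused the last decrease produces a small cluster of vertices — $v$, $u_p$, the responsible $S_2$-vertex, and its $S_1$-neighbours, which by the maximum-degree choice all had small degree at that stage — each missing many edges of $K_n$; summing their $M$-degrees exceeds $2|M|\le 2(2n-3-2\ell)$, a contradiction for $n\ge 12$. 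This is in the spirit of Claims~1 and~2 of Lemmas~\ref{lem8} and~\ref{lem9}; the genuinely new feature is the need to preserve the ``no two adjacent degree-$\ell$ vertices'' condition of Lemma~\ref{lem10} at the same time, which is what forces the extra subcases in both the large-$r$ and the small-$r$ parts. Once the claim holds, Lemma~\ref{lem10} supplies the $\ell$ edge-disjoint spanning trees of $H$, and adjoining $T$ gives $\kappa(S)\ge\ell+1$, hence $\overline{\kappa}_{n-1}(G)\ge\ell+1$.
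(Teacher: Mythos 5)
Your high-level reduction is the same as the paper's: build one $S$-Steiner tree $T$ through a vertex $v\notin S$, check that $H=G[S]\setminus E(T)$ satisfies the hypotheses of Lemma~\ref{lem10}, and adjoin the resulting $\ell$ spanning trees of $H$ to $T$. But the proof as written has a genuine gap in both of the places where the actual work lies, and the specific route you choose makes that work harder than it needs to be. First, in your ``small $r$'' case the existence of a Hamilton cycle in $G[S]$ is unsupported: $\delta(G[S])$ can be as small as $\ell+1$ (far below the Dirac threshold $\frac{n-1}{2}$, so Theorem~\ref{th3} does not apply), and $|M'|$ can be of order $n$, so no crude edge count gives Hamiltonicity either; the ``Chv\'atal-type degree-sequence argument'' you invoke would itself require exactly the kind of careful analysis of how the few low-degree vertices are distributed that constitutes the substance of the lemma, and it is not given. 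Second, the verification that $\delta(H)\ge\ell$ and that no two degree-$\ell$ vertices of $H$ are adjacent --- which you yourself call ``the main obstacle'' --- is only gestured at (``tracing back through the greedy step \dots\ summing their $M$-degrees exceeds $2|M|$''). Note that here $|M|$ can be as large as $2n-3-2\ell$, strictly larger than the bounds $n-2$ and $\frac{3n-6}{2}$ under which the analogous Claims~1 and~2 of Lemmas~\ref{lem8} and~\ref{lem9} are proved, while $\delta(G)$ can be as small as $\ell+1$; so it is not clear the counting closes, and you explicitly defer to unspecified ``extra subcases.'' A plan that names its own hardest steps without carrying them out is not yet a proof.

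The choice of $v$ is where you diverge from the paper, to your disadvantage. You root $T$ at a \emph{minimum}-degree vertex, so up to $n-2-\ell$ vertices of $S$ must be attached by edges inside $G[S]$, which is what forces the Hamilton-cycle and greedy machinery. The paper instead first observes (a two-line count from $e(G)\ge\binom{n-2}{2}+2\ell$) that $\Delta(G)\ge n-4$, and roots $T$ at a \emph{maximum}-degree vertex: then $T$ is a star plus at most three attachment edges, each vertex of $S$ loses at most one or two edges when passing to $H$, and checking the hypotheses of Lemma~\ref{lem10} reduces to a short explicit case analysis on $\Delta(G)\in\{n-1,n-2,n-3,n-4\}$ (with a separate treatment of $\ell=1$). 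If you want to complete your argument, either switch to the maximum-degree root, or supply in full the Hamiltonicity proof and the two counting claims for your greedy construction.
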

\begin{proof}
The following claim can be easily proved.

\noindent {\bf Claim 3}. $\Delta(G)\geq n-4$.

\noindent{\itshape Proof of Claim $3$.} Assume, to the contrary,
that $\Delta(G)\leq n-5$. Then $(n-2)(n-3)+4\ell=2e(G)\leq
n\Delta(G)\leq n(n-5)$, which implies that $4\ell+6\leq 0$, a
contradiction.

From Claim $3$, $n-4\leq \Delta(G)\leq n-1$. Our basic idea is to
find out a Steiner tree $T$ connecting $S=V(G)\setminus v$, where
$v\in V(G)$ such that $d_G(v)=\Delta(G)$. Let $G_1=G\setminus E(T)$.
Then we prove that $G_1[S]$ satisfies the conditions of Lemma
\ref{lem10} so that $G_1[S]$ contains $\ell$ spanning trees. These
trees together with the tree $T$ are $\ell+1$ internally disjoint
trees connecting $S$, which implies that
$\overline{\kappa}_{n-1}(G)\geq \ell+1$, as desired. We distinguish
the following four cases to show this lemma.

If $\Delta(G)=n-1$, then there exists a vertex $v\in V(G)$ such that
$d_G(v)=n-1$. Let $S=V(G)\setminus v=\{u_1,u_2,\cdots,u_{n-1}\}$.
Then $T=u_1v\cup u_2v\cup \cdots\cup u_{n-1}v$ is a tree connecting
$S$. Set $G_1=G\setminus E(T)$. Since $\delta(G)\geq \ell+1$ and any
two vertices of degree $\ell+1$ are nonadjacent, it follows that
$\delta(G_1[S])\geq \ell$ and any two vertices of degree $\ell$ are
nonadjacent. From Lemma \ref{lem10}, $G_1[S]$ contains $\ell$
spanning trees, as desired.

Consider the case $\Delta(G)=n-4$. We claim that $\delta(G)\geq
\ell+4$. Otherwise, let $\delta(G)\leq \ell+3$. Then there exists a
vertex $u$ such that $d_G(u)\leq \ell+3$. Then
$2[{{n-2}\choose{2}}+2\ell]=2e(G)=\sum_{u\in V(G)}d(u)\leq
d_G(u)+(n-1)\Delta(G)\leq (\ell+3)+(n-1)(n-4)$, which results in
$\ell\leq \frac{1}{3}$, a contradiction. Since $\Delta(G)=n-4$,
there exists a vertex $v\in V(G)$ such that $d_G(v)=n-4$. Let
$S=V(G)\setminus v=\{u_1,\cdots,u_{n-1}\}$ such that $vu_{n-1},
vu_{n-2}, vu_{n-3}\notin E(G)$. Pick up $u_i\in N_G(u_{n-1}), u_j\in
N_G(u_{n-2}), u_k\in N_G(u_{n-3})$ (note that $u_i,u_j,u_k$ are not
necessarily different). Then the tree $T=vu_1\cup vu_2\cup
\cdots\cup vu_{n-4}\cup u_iu_{n-1}\cup u_ju_{n-2}\cup u_ku_{n-1}$ is
our desired. Set $G_1=G\setminus E(T)$. Since $\delta(G)\geq
\ell+4$, $G_1[S]$ contains at most one vertex of degree $\ell$, as
desired.

If $\Delta(G)=n-2$, then there exists a vertex of degree $n-2$ in
$G$, say $v$. Let $S=G\setminus v=\{u_1,u_2,\cdots,u_{n-1}\}$ such
that $u_{n-1}$ is the unique vertex with $u_{n-1}v\notin E(G)$. Let
$d_G(u_{n-1})=x$. Without loss of generality, let
$N_G(u_{n-1})=\{u_1,\cdots,u_{x}\}$. Since $\delta(G)\geq \ell+1$,
$x\geq \ell+1\geq 2$. First, we consider the case $x\geq 3$. We
claim that there exists a vertex, say $u_i \ (1\leq i\leq x)$, such
that $d_G(u_i)\geq \ell+3$. Otherwise, let $d_G(u_{j})\leq \ell+2$
for each $u_j \ (1\leq j\leq x)$. Then $(n-2)(n-3)+4\ell=2e(G)\leq
d_G(u_{n-1})+d_G(v)+\sum_{j=1}^xd_G(u_{j})+\sum_{j=x+1}^{n-2}d_G(u_{j})\leq
x+(n-2)+(\ell+2)x+(n-2-x)(n-2)$ and hence $x\leq
\frac{2n-4\ell-4}{n-\ell-5}$. Since $x\geq 3$, $n+\ell-11\leq 0$,
which contradicts to $n\geq 12$. So there exists a vertex, say $u_i
\ (1\leq i\leq x)$, such that $d_G(u_i)\geq \ell+3$. Then the tree
$T=vu_1\cup vu_2\cup \cdots\cup vu_{n-2}\cup u_{n-1}u_i$ is our
desired. Set $G_1=G\setminus E(T)$. It is clear that
$\delta(G_1[S])\geq \ell$ and any two vertices of degree $\ell$ are
nonadjacent, as desired. Next, we consider the case $x=2$. Then
$\ell=1$, $d_G(u_{n-1})=2$ and $N_G(u_{n-1})=\{u_1,u_2\}$. Let $p$
be the number of vertices of degree $2$ in $G$. We claim $0\leq
p\leq 3$. Otherwise, let $p\geq 4$. Then
$2{{n-2}\choose{2}}+4=2e(G)=\sum_{v\in V(G)}d(v)\leq 2p+(n-p)(n-2)$
and hence $p\leq \frac{3n-10}{n-4}$. Since $p\geq 4$, it follows
that $n\leq 6$, a contradiction. So $0\leq p\leq 3$. If $p=3$, then
there are three vertices of degree $2$, say $v_1,v_2,v_3$. Let
$G_1=G\setminus \{v_1,v_2,v_3\}$. Since the three vertices are
pairwise nonadjacent, $|V(G_1)|=n-3$ and
$e(G_1)={{n-2}\choose{2}}+2-6
={{n-2}\choose{2}}-4>{{n-3}\choose{2}}$, a contradiction. So we can
assume $0\leq p\leq 2$. If $p=2$, then there are two vertices of
degree $2$, say $v_1,v_2$. Let $G_1=G\setminus \{v_1,v_2\}$. Then
$G_1$ is a graph obtained from a clique of order $n-2$ by deleting
$2$ edges and hence $\overline{\kappa}_{n-2}(G_1)\geq
\lfloor\frac{n-2}{2}\rfloor-2\geq 2$, that is, $G_1$ contains two
spanning trees, say $T_1',T_2'$. Let $N_G(v_1)=\{u_1,u_2\}$, the
trees $T_i=T_i'\cup v_1u_i \ (i=1,2)$ are two Steiner trees
connecting $S=V(G)\setminus v_2$, which implies that
$\overline{\kappa}_{n-1}(G)\geq 2$. So we now assume $0\leq p\leq
1$. Consider the case $p=1$. If $d_G(u_{n-1})=2$, then $d_G(u_j)\geq
3$ for each $u_j \ (1\leq j\leq n-2)$. Recall that
$N_G(u_{n-1})=\{u_1,u_{2}\}$, certainly we have $d_G(u_j)\geq 3 \
(j=1,2)$. Then the tree $T=vu_1\cup vu_2\cup \cdots\cup vu_{n-2}\cup
u_1u_{n-1}$ is a Steiner tree connecting $S=V(G)\setminus v$. Set
$G_1=G\setminus E(T)$. Clearly, $d_{G_1[S]}(u_1)\geq 1$,
$d_{G_1[S]}(u_{n-1})=1$ and $u_1u_{n-1}\notin E(G_1[S])$. In
addition, the degree of the other vertices in $G_1[S]$ is at least
$2$, as desired. Assume $d_G(u_{n-1})\geq 3$. Let $u_i$ be the
vertex of degree $2$ in $V(G)\setminus \{v,u_{n-1}\}$. If $u_i\in
N_G(u_{n-1})$, then there is another vertex $u_j\in N_G(u_{n-1})$
such that $d_G(u_j)\geq 3$ since $p=1$. Then the tree $T=vu_1\cup
vu_2\cup \cdots\cup vu_{n-2}\cup u_ju_{n-1}$ is our desired. Set
$G_1=G\setminus E(T)$. Obviously, $d_{G_1[S]}(u_i)=1$,
$d_{G_1[S]}(u_j)\geq 1$, $d_{G_1[S]}(u_{n-1})\geq 2$,
$u_iu_{j}\notin E(G_1[S])$ and the degree of the other vertices in
$G_1[S]$ is at least $2$, as desired. If $u_i\notin N_G(u_{n-1})$,
then there exists a vertex $u_j\in N_G(u_{n-1})$ such that
$d_G(u_j)\geq 3$ and $u_iu_{j}\notin E(G)$. Thus the tree
$T=vu_1\cup vu_2\cup \cdots\cup vu_{n-2}\cup u_ju_{n-1}$ is our
desired. Set $G_1=G\setminus E(T)$. Clearly, $d_{G_1[S]}(u_i)=1$,
$d_{G_1[S]}(u_t)\geq 1$, $d_{G_1[S]}(u_{n-1})\geq 2$,
$u_iu_{j}\notin E(G_1[S])$ and the degree of the other vertices in
$G_1[S]$ is at least $2$, as desired. For the remaining case $p=0$,
we choose a vertex $u_j\in N_G(u_{n-1})$ and the tree $T=vu_1\cup
vu_2\cup \cdots\cup vu_{n-2}\cup u_ju_{n-1}$ is our desired. Set
$G_1=G\setminus E(T)$. Clearly, $\delta(G_1[S])\geq 1$ and there is
at most one vertex of degree $1$, as desired.

Let us consider the remaining case $\Delta(G)=n-3$. Then there
exists a vertex of degree $n-3$, say $v$. Let $p$ be the number of
vertices of degree $\ell+1$. Since $(n-2)(n-3)+4\ell=2e(G)\leq
p(\ell+1)+(n-p)(n-3)$, it follows that $p\leq
\frac{2n-4\ell-6}{n-\ell-4}$. Consider the case $\ell\geq 2$. Since
$p\leq \frac{2n-4\ell-6}{n-\ell-4}$, if $p\geq 2$ then $\ell\leq 1$,
a contradiction. So $0\leq p\leq 1$ for $2\leq \ell\leq
\lfloor\frac{n-5}{2}\rfloor$. Let $V(G)\setminus
v=\{u_1,\cdots,u_{n-1}\}$ such that $vu_{n-1},vu_{n-2}\notin E(G)$.
Without loss of generality, let $d_G(u_{n-1})\geq d_G(u_{n-2})$. For
vertex $u\in V(G)$, we choose $\ell+1$ vertices in $N_G(u)$, say
$u_1,u_2,\cdots,u_{\ell+1}$ and the following claim can be easily
proved.

\noindent {\bf Claim 4}. For $\ell\geq 2$, there exists a vertex
$u_i\in N_G(u)$ such that $d_G(u_i)\geq \ell+4 \ (1\leq i\leq
\ell+1)$.

\noindent{\itshape Proof of Claim $4$.} Assume, to the contrary,
that $d_G(u_j)\leq \ell+3$ for each $u_j \ (1\leq j\leq \ell+1)$.
Then $(n-2)(n-3)+4\ell=2e(G)\leq (\ell+1)(\ell+3)+(n-\ell-1)(n-3)$
and hence $(\ell-1)(n-3)\leq \ell^2+3$. So $n-3\leq
\frac{\ell^2+3}{\ell-1}=\ell+1+\frac{4}{\ell-1}\leq \ell+5\leq
\frac{n+5}{2}$, which contradicts to $n\geq 12$.

First, we consider the case $u_{n-1}u_{n-2}\in E(G)$. From the
above, $0\leq p\leq 1$ for $2\leq \ell\leq
\lfloor\frac{n-5}{2}\rfloor$, that is, there is at most one vertex
of degree $\ell+1$ in $G$. If $d_G(u_{n-2})=\ell+1$, then
$d_G(u_{n-1})\geq \ell+2$ and hence there exists a vertex $u_i\in
N_{G}(u_{n-1})\setminus u_{n-2}$ such that $d_G(u_i)\geq \ell+4$ by
Claim $4$. Then the tree $T=vu_1\cup vu_2\cup \cdots\cup
vu_{n-3}\cup u_iu_{n-1}\cup u_{n-1}u_{n-2}$ is a Steiner tree
connecting $S=V(G)\setminus v$. Clearly, $d_{G_1[S]}(u_{n-1})\geq
d_G(u_{n-1})-2\geq \ell$, $d_{G_1[S]}(u_{n-2})=d_G(u_{n-2})-1=\ell$
and $u_{n-2}u_{n-1}\notin E(G_1)$. In addition,
$d_{G_1[S]}(u_{i})\geq d_G(u_{i})-2\geq \ell+2$ and
$d_{G_1[S]}(u_{j})\geq d_G(u_{j})-1\geq \ell+1$ for each $u_j\in
V(G)\setminus \{u_{n-1},u_{n-2},u_{i},v\}$. Thus $\delta(G_1[S])\geq
\ell$ and any two vertices of degree $\ell$ are nonadjacent, as
desired. If $d_G(u_{n-2})\geq \ell+2$, then $d_G(u_{n-1})\geq
d_G(u_{n-2})\geq \ell+2$. From Claim $4$, there exist two vertices,
say $u_i,u_j$, such that $u_i\in N_G(u_{n-1})\setminus u_{n-2}$,
$u_j\in N_G(u_{n-2})\setminus u_{n-1}$, $d_G(u_{i})\geq \ell+4$ and
$d_G(u_{j})\geq \ell+4$ (note that $u_i,u_j$ are not necessarily
different). Then the tree $T=vu_1\cup vu_2\cup \cdots\cup
vu_{n-3}\cup u_iu_{n-1}\cup u_{j}u_{n-2}$ is our desired. Set
$G_1=G\setminus E(T)$. One can see that $G_1[S]$ satisfies the
conditions of Lemma \ref{lem10}. So we can get
$\overline{\kappa}_{n-1}(G)\geq \ell+1$. Next, we consider the case
$u_{n-1}u_{n-2}\notin E(G)$. Then $d_G(u_{n-1})\geq d_G(u_{n-2})\geq
\ell+1$. From Claim $4$, there exist two vertices, say $u_i,u_j$,
such that $u_i\in N_G(u_{n-1})\setminus u_{n-2}$, $u_j\in
N_G(u_{n-2})\setminus u_{n-1}$, $d_G(u_{i})\geq \ell+4$ and
$d_G(u_{j})\geq \ell+4$ (note that $u_i,u_j$ are not necessarily
different). Thus the tree $T=vu_1\cup vu_2\cup \cdots\cup
vu_{n-3}\cup u_iu_{n-1}\cup u_{j}u_{n-2}$ is our desired. Set
$G_1=G\setminus E(T)$ and $S=V(G)\setminus v$. One can check that
$\delta(G_1[S])\geq \ell$ and there is at most one vertex of degree
$\ell$, as desired. Similar to the proof of the case
$\Delta(G)=n-2$, we can prove that the conclusion holds for
$\ell=1$. The proof is now complete.
\end{proof}

\subsection{Results for the maximum generalized local
(edge-)connectivity}

Let $\mathcal {H}_n$ be a graph class obtained from the complete
graph of order $n-2$ by adding two nonadjacent vertices and joining
each of them to any $\ell$ vertices of $K_{n-2}$. The following
theorem summarizes the results for a general $\ell$.

\begin{theorem}\label{th4}
Let $G$ be a connected graph of order $n \ (n\geq 12)$. If
$\overline{\kappa}_{n-1}(G)\leq \ell \ (1\leq \ell\leq
\lfloor\frac{n+1}{2}\rfloor)$, then
$$
e(G)\leq \left\{
\begin{array}{ll}
{{n-2}\choose{2}}+2\ell, &if~1\leq \ell\leq
\lfloor\frac{n-5}{2}\rfloor;\\
{{n-2}\choose{2}}+n-2,&if~\ell=
\lfloor\frac{n-3}{2}\rfloor~and~$n$~is~odd;\\
{{n-2}\choose{2}}+n-4,&if~\ell=
\lfloor\frac{n-3}{2}\rfloor~and~$n$~is~even;\\
{{n-1}\choose{2}}+n-2,&if~\ell=
\lfloor\frac{n-1}{2}\rfloor~and~$n$~is~odd;\\
{{n-1}\choose{2}}+\frac{n-2}{2},&if~\ell=
\lfloor\frac{n-1}{2}\rfloor~and~$n$~is~even;\\
{{n}\choose{2}},&if~\ell=\lfloor\frac{n+1}{2}\rfloor.
\end{array}
\right.
$$
with equality if and only if $G\in \mathcal{H}_n$ for $1\leq
\ell\leq \lfloor\frac{n-5}{2}\rfloor$; $G=K_n\setminus M$ where
$|M|=n-1$ for $\ell=\lfloor\frac{n-3}{2}\rfloor$ and $n$ odd; $G\in
\mathcal{H}_n$ for $\ell=\lfloor\frac{n-3}{2}\rfloor$ and $n$ even;
$G=K_n\setminus e$ where $e\in E(K_n)$ for
$\ell=\lfloor\frac{n-1}{2}\rfloor$ and $n$ odd; $G=K_n\setminus M$
where $|M|=\frac{n}{2}$ for $\ell=\lfloor\frac{n-1}{2}\rfloor$ and
$n$ even; $G=K_n$ for $\ell=\lfloor\frac{n+1}{2}\rfloor$.
\end{theorem}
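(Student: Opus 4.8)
The plan is to prove the upper bound for $e(G)$ case by case, mirroring the structure already used for $k=n$ in Theorem~\ref{th2}, and then to identify the extremal graphs. For the top three regimes $\ell=\lfloor\frac{n-1}{2}\rfloor$ (both parities) and $\ell=\lfloor\frac{n+1}{2}\rfloor$, everything follows from the structural results already in hand. If $\ell=\lfloor\frac{n+1}{2}\rfloor$ then by Observation~\ref{obs4} every connected graph satisfies $\overline{\kappa}_{n-1}(G)\le\ell$, so $e(G)\le\binom{n}{2}$ with equality iff $G=K_n$. For $\ell=\lfloor\frac{n-1}{2}\rfloor$ we invoke Proposition~\ref{pro1}: $\overline{\kappa}_{n-1}(G)=\lfloor\frac{n+1}{2}\rfloor$ whenever $G=K_n$ ($n$ odd) or $G=K_n\setminus M$ with $|M|\le\frac{n-2}{2}$ ($n$ even); hence to have $\overline{\kappa}_{n-1}(G)\le\lfloor\frac{n-1}{2}\rfloor$ we must delete at least one edge ($n$ odd), giving $e(G)\le\binom{n-1}{2}+n-2$ with equality iff $G=K_n\setminus e$, and at least $\frac{n}{2}$ edges ($n$ even), giving $e(G)\le\binom{n-1}{2}+\frac{n-2}{2}$ with equality iff $|M|=\frac{n}{2}$.

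The case $1\le\ell\le\lfloor\frac{n-5}{2}\rfloor$ is the heart of the argument and uses Lemma~\ref{lem11} the way Theorem~\ref{th2} used Lemma~\ref{lem2}. Suppose $\overline{\kappa}_{n-1}(G)\le\ell$ and, for contradiction, $e(G)\ge\binom{n-2}{2}+2\ell+1$. First I would argue that $G$ must contain two nonadjacent vertices $u,v$ with $d_G(u),d_G(v)\le\ell$: a counting argument shows that otherwise either $\delta(G)\ge\ell+1$ and any two vertices of degree $\ell+1$ are nonadjacent (whence Lemma~\ref{lem11} gives $\overline{\kappa}_{n-1}(G)\ge\ell+1$, a contradiction), or there are two adjacent low-degree vertices, in which case I split off the edge between them and re-examine; the arithmetic here is where one must be careful that the bound $\binom{n-2}{2}+2\ell$ and the hypothesis $\ell\le\lfloor\frac{n-5}{2}\rfloor$ exactly balance. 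Granting two nonadjacent vertices $u,v$ of degree $\le\ell$, one has $e(G-u-v)\ge\binom{n-2}{2}+2\ell+1-2\ell=\binom{n-2}{2}+1>\binom{n-2}{2}$, which is impossible on $n-2$ vertices. Hence $e(G)\le\binom{n-2}{2}+2\ell$. For equality, $e(G-u-v)=\binom{n-2}{2}$ forces $G-u-v=K_{n-2}$ and $d_G(u)=d_G(v)=\ell$ with $u\not\sim v$, i.e. $G\in\mathcal{H}_n$; conversely one checks (again via Lemma~\ref{lem11}, or directly by Nash-Williams--Tutte on $K_{n-2}$ after peeling off one of the two pendant-like vertices) that every $G\in\mathcal{H}_n$ has $\overline{\kappa}_{n-1}(G)=\ell$, realizing the bound.

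For $\ell=\lfloor\frac{n-3}{2}\rfloor$ I would use Proposition~\ref{pro3}, which characterizes exactly the graphs with $\overline{\kappa}_{n-1}(G)=\lfloor\frac{n-1}{2}\rfloor$. For $n$ odd, $\overline{\kappa}_{n-1}(G)=\frac{n-1}{2}$ iff $G=K_n\setminus M$ with $1\le|M|\le n-2$; so $\overline{\kappa}_{n-1}(G)\le\frac{n-3}{2}=\lfloor\frac{n-3}{2}\rfloor$ forces $|M|\ge n-1$, giving $e(G)\le\binom{n}{2}-(n-1)=\binom{n-1}{2}+1$... wait, one must recompute: $\binom{n}{2}-(n-1)=\binom{n-1}{2}$, so in fact the claimed value $\binom{n-2}{2}+n-2$ must be checked against Proposition~\ref{pro3}'s description of which $M$ with $|M|=n-1$ leave $\overline{\kappa}_{n-1}$ below the threshold — here I expect the extremal $G=K_n\setminus M$ with $|M|=n-1$ chosen so that the deleted edges are spread out (no second-minimal-degree obstruction), and $\binom{n}{2}-(n-1)$ indeed equals $\binom{n-2}{2}+(2n-4)-(n-1)=\binom{n-2}{2}+n-3$; I would reconcile this with the stated $\binom{n-2}{2}+n-2$ by noting $\binom{n}{2}-\binom{n-2}{2}=2n-3$, so $e(G)=\binom{n-2}{2}+n-2$ corresponds to $|M|=n-1$, consistent. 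For $n$ even the same Proposition shows the boundary of $\overline{\kappa}_{n-1}(G)=\frac{n-2}{2}$ is governed by the condition on the second minimal degree vertex when $n+1\le|M|\le\frac{3n-6}{2}$; the largest $e(G)$ with $\overline{\kappa}_{n-1}(G)\le\lfloor\frac{n-3}{2}\rfloor=\frac{n-4}{2}$ then comes from the graphs in $\mathcal{H}_n$ (two nonadjacent vertices of degree $\ell=\frac{n-4}{2}$ attached to $K_{n-2}$), yielding $e(G)=\binom{n-2}{2}+2\cdot\frac{n-4}{2}=\binom{n-2}{2}+n-4$.

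**Main obstacle.** The delicate point is the $1\le\ell\le\lfloor\frac{n-5}{2}\rfloor$ case: getting from ``$e(G)$ large'' to ``there exist two nonadjacent vertices of degree $\le\ell$'' requires handling the annoying sub-case where all low-degree vertices are pairwise adjacent or there is exactly one low-degree vertex, and there Lemma~\ref{lem11}'s hypothesis (``any two vertices of degree $\ell+1$ are nonadjacent'') is exactly what one has to either verify or destroy by an edge-deletion/vertex-deletion reduction; the bookkeeping must show the inequalities close precisely at the claimed bound. The converse direction — verifying $G\in\mathcal{H}_n$ really attains $\overline{\kappa}_{n-1}=\ell$ and not more — is routine given Lemma~\ref{lem11} for the lower bound $\ge\ell$ and a short degree count (each of the two special vertices has degree $\ell$, and any $S$ of size $n-1$ omits at most one of them, so $\overline{\kappa}_{n-1}\le\ell$) for the upper bound.
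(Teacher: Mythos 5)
Your treatment of the cases $\ell=\lfloor\frac{n+1}{2}\rfloor$, $\ell=\lfloor\frac{n-1}{2}\rfloor$ and $\ell=\lfloor\frac{n-3}{2}\rfloor$ via Observation~\ref{obs4} and Propositions~\ref{pro1} and~\ref{pro3} is exactly what the paper does, and the arithmetic you eventually settle on ($\binom{n}{2}-(n-1)=\binom{n-2}{2}+n-2$, etc.) is correct. The problem is the main case $1\leq\ell\leq\lfloor\frac{n-5}{2}\rfloor$, where your argument has a genuine gap. Your pivot is the claim that $e(G)\geq\binom{n-2}{2}+2\ell+1$ together with $\overline{\kappa}_{n-1}(G)\leq\ell$ forces two \emph{nonadjacent} vertices of degree at most $\ell$. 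But the edge count alone already makes such a pair impossible (removing two nonadjacent vertices of degree $\leq\ell$ deletes at most $2\ell$ edges and would leave more than $\binom{n-2}{2}$ edges on $n-2$ vertices), so the step ``granting two nonadjacent vertices $u,v$ of degree $\leq\ell$\dots'' is vacuous: the entire contradiction must be extracted from your ``otherwise'' branch, and that branch is exactly where you stop. In fact the paper first proves (Claim 5) that $\delta(G)\geq\ell+1$ outright, by applying Theorem~\ref{th2} to $G-v$ for a minimum-degree vertex $v$; there are no degree-$\leq\ell$ vertices at all in this regime.

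The sub-case you wave at with ``split off the edge between them and re-examine'' --- two adjacent vertices $v_1,v_2$ of degree $\ell+1$ --- is precisely where Lemma~\ref{lem11} fails to apply and where the real work sits. Deleting the edge $v_1v_2$ does not help: both degrees drop to $\ell$ and the edge count drops to $\binom{n-2}{2}+2\ell$, so neither Lemma~\ref{lem11} nor a recursion closes. The paper's resolution is a construction: delete both $v_1$ and $v_2$, note that $e(G)-(2\ell+1)\geq\binom{n-2}{2}$ forces $G-v_1-v_2=K_{n-2}$, take $\ell+1$ edge-disjoint spanning trees $T_1',\dots,T_{\ell+1}'$ of $K_{n-2}$ (possible since $\lfloor\frac{n-2}{2}\rfloor\geq\ell+1$), and extend them to $\ell+1$ internally disjoint Steiner trees of $S=V(G)\setminus v_2$ via the $\ell$ neighbours of $v_1$ in $K_{n-2}$ and the path $v_1v_2u_t$; this yields $\overline{\kappa}_{n-1}(G)\geq\ell+1$, the desired contradiction. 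The same omission recurs in your equality analysis: you assume the two nonadjacent degree-$\ell$ vertices exist, whereas the paper must first rule out adjacent degree-$\ell$ pairs, bound the number $p$ of degree-$\ell$ vertices by $2\leq p\leq 3$ (Claim 6, with yet another tree construction to exclude $p=1$), and then exclude $p=3$ by an edge count, before concluding $G\in\mathcal{H}_n$. Without these constructions your proof of both the bound and its sharpness is incomplete.
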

\begin{proof}
For $1\leq \ell\leq \lfloor\frac{n-5}{2}\rfloor$, we assume that
$e(G)\geq {{n-2}\choose{2}}+2\ell+1$. Then the following claim is
immediate.

\noindent {\bf Claim 5}. $\delta(G)\geq \ell+1$.

\noindent{\itshape Proof of Claim $5$.} Assume, to the contrary,
that $\delta(G)\leq \ell$. Then there exists a vertex $v\in V(G)$
such that $d_G(v)=\delta(G)\leq \ell$, which results in $e(G-v)\geq
e(G)-\ell\geq {{n-2}\choose{2}}+\ell+1$. Since $1\leq \ell\leq
\lfloor\frac{n-5}{2}\rfloor$, it follows that
$\overline{\kappa}_{n-1}(G-v)\geq \ell+1$ by Theorem \ref{th2},
which results in $\overline{\kappa}_{n-1}(G)\geq \ell+1$, a
contradiction.

From Claim $5$, $\delta(G)\geq \ell+1$. If any two vertices of
degree $\ell+1$ are nonadjacent, then
$\overline{\kappa}_{n-1}(G)\geq \ell+1$ by Lemma \ref{lem11}, a
contradiction. Assume that $v_1$ and $v_2$ are two vertices of
degree $\ell+1$ such that $v_1v_2\in E(G)$. Let $G_1=G\setminus
\{v_1,v_2\}$ and $V(G_1)=\{u_1,\cdots,u_{n-1}\}$. Then $e(G_1)\geq
e(G)-(2\ell+1)={{n-2}\choose{2}}$ and hence $G_1$ is a clique of
order $n-2$. Then $G_1$ contains $\lfloor\frac{n-2}{2}\rfloor\geq
\ell+1$ edge-disjoint spanning trees, say
$T_1',T_2',\cdots,T_{\ell+1}'$. Without loss of generality, let
$N_G(v_1)=\{u_1,u_2,\cdots,u_{\ell},v_2\}$. Choose
$S=\{u_1,u_2,\cdots,u_{n-1},v_1\}$. Then $T_i=T_i'\cup v_1u_i \
(1\leq i\leq \ell)$ together with $T_{\ell+1}=T_{\ell+1}'\cup
v_1v_2\cup v_2u_t$ are $\ell+1$ internally disjoint trees connecting
$S$ where $u_t\in N_G(v_2)\setminus v_1$, which implies that
$\overline{\kappa}_{n-1}(G)\geq \ell+1$, a contradiction. So
$e(G)\leq {{n-2}\choose{2}}+2\ell$ for $1\leq \ell\leq
\lfloor\frac{n-5}{2}\rfloor$. From Proposition \ref{pro3}, $e(G)\leq
{{n-2}\choose{2}}+n-2$ for $\ell=\lfloor\frac{n-3}{2}\rfloor$ and
$n$ odd, and $e(G)\leq {{n-2}\choose{2}}+n-4$ for
$\ell=\lfloor\frac{n-3}{2}\rfloor$ and $n$ even. From Proposition
\ref{pro1}, $e(G)\leq {{n-1}\choose{2}}+n-2$ for
$\ell=\lfloor\frac{n-1}{2}\rfloor$ and $n$ odd, and $e(G)\leq
{{n-1}\choose{2}}+\frac{n-2}{2}$ for
$\ell=\lfloor\frac{n-1}{2}\rfloor$ and $n$ even. If
$\ell=\lfloor\frac{n+1}{2}\rfloor$, then for any connected graph $G$
it follows that $\overline{\kappa}_{n-1}(G)\leq \ell$ by Observation
\ref{obs4} and hence $e(G)\leq {{n}\choose{2}}$.

Now we characterize the graphs attaining these upper bounds. For
$\ell=\lfloor\frac{n+1}{2}\rfloor$, if $e(G)={{n}\choose{2}}$, then
$G=K_n$. For $\ell=\lfloor\frac{n-1}{2}\rfloor$ and $n$ odd, if
$e(G)={{n-1}\choose{2}}+n-2$, then $G=K_n\setminus e$ where $e\in
E(K_n)$. For $\ell=\lfloor\frac{n-1}{2}\rfloor$ and $n$ even, if
$e(G)={{n-1}\choose{2}}+\frac{n-2}{2}$, then $G=K_n\setminus M$
where $|M|=\frac{n}{2}$. For $\ell=\lfloor\frac{n-3}{2}\rfloor$ and
$n$ odd, if $e(G)={{n-2}\choose{2}}+n-2$, then $G=K_n\setminus M$
where $|M|=n-1$. Assume that $e(G)={{n-2}\choose{2}}+n-4$ for
$\ell=\lfloor\frac{n-3}{2}\rfloor$ and $n$ even. From Proposition
\ref{pro3}, $G$ is a graph obtained from the complete graph
$K_{n-2}$ by adding two nonadjacent vertices and adding
$\frac{n-4}{2}$ edges between each of them and the complete graph
$K_{n-2}$, that is, $G\in \mathcal {H}_n$.

Let us now focus on the case $1\leq \ell\leq
\lfloor\frac{n-5}{2}\rfloor$. Suppose
$e(G)={{n-2}\choose{2}}+2\ell$. Similar to Claim $5$, $\delta(G)\geq
\ell$. If $\delta(G)=\ell+1$ and any two vertices of degree $\ell+1$
are nonadjacent, then $\overline{\kappa}_{n-1}(G)\geq \ell+1$ by
Lemma \ref{lem11}, a contradiction. Let $v_1$ and $v_2$ be two
vertices of degree $\ell+1$ such that $v_1v_2\in E(G)$. It is clear
that $G_1=G\setminus \{v_1,v_2\}$ is a graph obtained from the
complete graph of order $n-2$ by deleting an edge. For $n$ odd, from
Corollary \ref{cor1} we have
$\overline{\kappa}_{n-2}(G_1)=\lfloor\frac{n-2}{2}\rfloor=\frac{n-3}{2}\geq
\ell+1$ since $\ell\leq \lfloor\frac{n-5}{2}\rfloor=\frac{n-5}{2}$.
For $n$ even, from Corollary \ref{cor1}, it follows that
$\overline{\kappa}_{n-2}(G_1)\geq \lfloor\frac{n-2}{2}\rfloor-1=
\frac{n-4}{2}\geq \ell+1$ since $\ell\leq
\lfloor\frac{n-5}{2}\rfloor=\frac{n-6}{2}$. Clearly, $G_1$ contains
$\ell+1$ edge-disjoint spanning trees, say
$T_1',T_2',\cdots,T_{\ell+1}'$. Set
$N_G(v_1)=\{u_1,u_2,\cdots,u_{\ell},v_2\}$. Then $T_i=T_i'\cup
v_1u_i \ (1\leq i\leq \ell)$ and $T_{\ell+1}=T_{\ell+1}'\cup
v_1v_2\cup v_1u_t$ are $\ell+1$ internally disjoint trees connecting
$S=V(G)\setminus v_2$ where $u_t\in N_G(v_2)\setminus v_1$, which
implies that $\overline{\kappa}_{n-1}(G)\geq \ell+1$, a
contradiction. Suppose $\delta(G)=\ell$. If there exist two vertices
of degree $\ell$, say $v_1,v_2$, such that $v_1v_2\in E(G)$. Set
$G_1=G\setminus \{v_1,v_2\}$. Then $|V(G_1)|=n-2$ and
$e(G_1)={{n-2}\choose{2}}+1$, a contradiction.

So we assume that any two vertices of degree $\ell$ are nonadjacent
in $G$. Let $p$ be the number of vertices of degree $\ell$. The
following claim can be easily proved.

\noindent {\bf Claim 6}. $2\leq p\leq 3$.

\noindent{\itshape Proof of Claim $6$.} Assume $p\geq 4$. Then
$2{{n-2}\choose{2}}+4\ell=2e(G)=\sum_{v\in V(G)}d(v)\leq
p\ell+(n-p)(n-1)$ and hence $p\leq \frac{4n-4\ell-6}{n-\ell-1}$.
Since $p\geq 4$, it follows that $4n-4\ell-4\leq 4n-4\ell-6$, a
contradiction. Assume $p=1$, that is, $G$ contains exact one vertex
of degree $\ell$, say $v_1$. Set $G_1=G\setminus v_1$. Clearly,
$e(G_1)=e(G)-\ell={{n-2}\choose{2}}+\ell$. Since
$\overline{\kappa}_{n-1}(G)\leq \ell$, it follows that
$\overline{\kappa}_{n-1}(G_1)\leq \overline{\kappa}_{n-1}(G)\leq
\ell$. From Theorem \ref{th2}, $G_1$ is a graph obtained from a
clique of order $n-2$ by adding a vertex of degree $\ell$, say
$v_2$. Since $p=1$ and $v_1v_2\notin E(G)$, we have
$d_G(v_1)=\ell+1$ and $d_G(v_2)=\ell$. Clearly, $G_1=G\setminus
\{v_1,v_2\}$ is a clique of order $n-2$. Thus $G_1$ contains
$\lfloor\frac{n-2}{2}\rfloor\geq \ell+1$ edge-disjoint spanning
trees, say $T_1',T_2',\cdots,T_{\ell+1}'$. Without loss of
generality, let $N_G(v_1)=\{v_2,u_1,u_2,\cdots,u_{\ell}\}$. Then the
trees $T_i=v_1u_i\cup T_i' \ (1\leq i\leq \ell)$ together with
$T_{\ell+1}=T_{\ell+1}'\cup v_1v_2\cup v_2u_t$ form $\ell+1$
edge-disjoint trees connecting $S=V(G)\setminus v_2$, where $u_t\in
N_G(v_2)\setminus v_1$. This implies that
$\overline{\kappa}_{n-1}(G)\geq \ell+1$, a contradiction.

From Claim $6$, we know that $p=2,3$. If $p=3$, then $G$ contains
three vertices of degree $\ell$, say $v_1,v_2,v_3$. Set
$G_1=G\setminus \{v_1,v_2,v_3\}$. Then $|V(G_1)|=n-3$ and
$e(G_1)={{n-2}\choose{2}}+2\ell-3\ell={{n-2}\choose{2}}-\ell>{{n-3}\choose{2}}$
since $1\leq \ell\leq \lfloor\frac{n-5}{2}\rfloor$, a contradiction.
If $p=2$, then $G$ contains two vertices of degree $\ell$, say
$v_1,v_2$. Set $G_1=G\setminus \{v_1,v_2\}$. Since $v_1$ and $v_2$
are nonadjacent, $e(G_1)=e(G)-2\ell={{n-2}\choose{2}}$ and hence
$G_1$ is a complete graph of order $n-2$, which implies that $G\in
\mathcal {H}_n$.
\end{proof}

The following corollary is immediate from Theorem \ref{th4}.

\begin{corollary}\label{cor3}
For $1\leq \ell\leq \lfloor\frac{n+1}{2}\rfloor$ and $n\geq 12$,
$$
f(n;\overline{\kappa}_{n-1}\leq \ell)=\left\{
\begin{array}{ll}
{{n-2}\choose{2}}+2\ell, &if~1\leq \ell\leq
\lfloor\frac{n-5}{2}\rfloor,~or~\ell=
\lfloor\frac{n-3}{2}\rfloor~and~$n$~is~even;\\
{{n-2}\choose{2}}+2\ell+1,&if~\ell=
\lfloor\frac{n-3}{2}\rfloor~and~$n$~is~odd;\\
{{n-1}\choose{2}}+\ell,&if~\ell=
\lfloor\frac{n-1}{2}\rfloor~and~$n$~is~even;\\
{{n-1}\choose{2}}+2\ell-1,&if~\ell=
\lfloor\frac{n-1}{2}\rfloor~and~$n$~is~odd;\\
{{n}\choose{2}},&if~\ell=\lfloor\frac{n+1}{2}\rfloor.
\end{array}
\right.
$$
\end{corollary}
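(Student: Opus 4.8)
The plan is to read Corollary \ref{cor3} off directly from Theorem \ref{th4}, since $f(n;\overline{\kappa}_{n-1}\leq \ell)$ is, by definition, the maximum of $e(G)$ over all graphs $G$ of order $n$ with $\overline{\kappa}_{n-1}(G)\leq \ell$. First I would observe that in each of the five $\ell$-ranges with $1\leq \ell\leq \lfloor\frac{n+1}{2}\rfloor$, Theorem \ref{th4} does two things at once: it gives an upper bound $U(\ell)$ for $e(G)$ whenever $\overline{\kappa}_{n-1}(G)\leq \ell$, and it characterizes the graphs attaining $e(G)=U(\ell)$ as certain members of $\mathcal{H}_n$, or $K_n\setminus M$ for a prescribed edge set $M$, or $K_n$. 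The ``only if'' direction of that characterization guarantees that these extremal graphs genuinely satisfy $\overline{\kappa}_{n-1}\leq \ell$, hence each of them lies in the family over which the defining maximum is taken and witnesses $f(n;\overline{\kappa}_{n-1}\leq \ell)\geq U(\ell)$. Combined with the upper bound $e(G)\leq U(\ell)$ from the same theorem, this yields $f(n;\overline{\kappa}_{n-1}\leq \ell)=U(\ell)$ in every case.

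It then remains only to rewrite $U(\ell)$ in terms of $\ell$, which is routine floor-function arithmetic. I would use $\lfloor\frac{n-3}{2}\rfloor=\frac{n-3}{2}$ and $\lfloor\frac{n-1}{2}\rfloor=\frac{n-1}{2}$ for $n$ odd, and $\lfloor\frac{n-3}{2}\rfloor=\frac{n-4}{2}$, $\lfloor\frac{n-1}{2}\rfloor=\frac{n-2}{2}$ for $n$ even. For $\ell=\lfloor\frac{n-3}{2}\rfloor$ with $n$ odd one has $n-2=2\ell+1$, so $\binom{n-2}{2}+n-2=\binom{n-2}{2}+2\ell+1$; for $\ell=\lfloor\frac{n-3}{2}\rfloor$ with $n$ even one has $n-4=2\ell$, so $\binom{n-2}{2}+n-4=\binom{n-2}{2}+2\ell$, which coincides with the value for $1\leq \ell\leq \lfloor\frac{n-5}{2}\rfloor$; for $\ell=\lfloor\frac{n-1}{2}\rfloor$ with $n$ even one has $\ell=\frac{n-2}{2}$, so $\binom{n-1}{2}+\frac{n-2}{2}=\binom{n-1}{2}+\ell$; for $\ell=\lfloor\frac{n-1}{2}\rfloor$ with $n$ odd one has $n-2=2\ell-1$, so $\binom{n-1}{2}+n-2=\binom{n-1}{2}+2\ell-1$; and for $\ell=\lfloor\frac{n+1}{2}\rfloor$ the value $\binom{n}{2}$ is already in the desired form. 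Finally I would check that consecutive critical values of $\ell$ (namely $\lfloor\frac{n-5}{2}\rfloor$, $\lfloor\frac{n-3}{2}\rfloor$, $\lfloor\frac{n-1}{2}\rfloor$, $\lfloor\frac{n+1}{2}\rfloor$) differ by exactly $1$ for both parities of $n$, so that the five ranges partition $\{1,2,\dots,\lfloor\frac{n+1}{2}\rfloor\}$ with no gap.

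No real obstacle is expected here: the corollary is essentially a cosmetic restatement of Theorem \ref{th4}, and the only point needing any care is that the characterizations in Theorem \ref{th4} are true ``if and only if'' statements, so that the asserted extremal edge counts are indeed realized by graphs satisfying $\overline{\kappa}_{n-1}\leq \ell$; granting that, the remaining arithmetic above is immediate.
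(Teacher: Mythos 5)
Your proposal is correct and matches the paper, which simply declares the corollary ``immediate from Theorem \ref{th4}''; your contribution is just to spell out the floor-function arithmetic and to note that the extremal graphs of Theorem \ref{th4} (members of $\mathcal{H}_n$, $K_n\setminus M$, or $K_n$) do realize the bounds, both of which check out.
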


Now we focus on the edge case.

\begin{theorem}\label{th5}
Let $G$ be a connected graph of order $n\ (n\geq 12)$. If
$\overline{\lambda}_{n-1}(G)\leq \ell \ (1\leq \ell\leq
\lfloor\frac{n+1}{2}\rfloor)$, then
$$
e(G)\leq \left\{
\begin{array}{ll}
{{n-2}\choose{2}}+2\ell, &if~1\leq \ell\leq
\lfloor\frac{n-5}{2}\rfloor;\\
{{n-2}\choose{2}}+n-2,&if~\ell=
\lfloor\frac{n-3}{2}\rfloor~and~$n$~is~odd;\\
{{n-2}\choose{2}}+n-4,&if~\ell=
\lfloor\frac{n-3}{2}\rfloor~and~$n$~is~even;\\
{{n-1}\choose{2}}+n-2,&if~\ell=
\lfloor\frac{n-1}{2}\rfloor~and~$n$~is~odd;\\
{{n-1}\choose{2}}+\frac{n-2}{2},&if~\ell=
\lfloor\frac{n-1}{2}\rfloor~and~$n$~is~even;\\
{{n}\choose{2}},&if~\ell=\lfloor\frac{n+1}{2}\rfloor.
\end{array}
\right.
$$
with equality if and only if $G\in \mathcal{H}_n$ for $1\leq
\ell\leq \lfloor\frac{n-5}{2}\rfloor$; $G=K_n\setminus M$ where
$|M|=n-1$ for $\ell=\lfloor\frac{n-3}{2}\rfloor$ and $n$ odd; $G\in
\mathcal{H}_n$ for $\ell=\lfloor\frac{n-3}{2}\rfloor$ and $n$ even;
$G=K_n\setminus e$ where $e\in E(K_n)$ for
$\ell=\lfloor\frac{n-1}{2}\rfloor$ and $n$ odd; $G=K_n\setminus M$
where $|M|=\frac{n}{2}$ for $\ell=\lfloor\frac{n-1}{2}\rfloor$ and
$n$ even; $G=K_n$ for $\ell=\lfloor\frac{n+1}{2}\rfloor$.
\end{theorem}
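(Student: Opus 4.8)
The plan is to obtain Theorem~\ref{th5} as an essentially immediate consequence of Theorem~\ref{th4}, using the simple fact that the six displayed upper bounds here are \emph{identical, term by term}, to those in Theorem~\ref{th4}. Indeed, by $(1)$ of Observation~\ref{obs1} every connected graph $G$ satisfies $\overline{\kappa}_{n-1}(G)\le\overline{\lambda}_{n-1}(G)$, so the hypothesis $\overline{\lambda}_{n-1}(G)\le\ell$ already forces $\overline{\kappa}_{n-1}(G)\le\ell$; plugging this into Theorem~\ref{th4} yields the stated bound on $e(G)$ in all six ranges of $\ell$ at once. The ``only if'' half of the equality statement is just as short: if in addition $e(G)$ meets the bound, then $e(G)$ equals the corresponding extremal value in Theorem~\ref{th4}, so the equality clause there forces $G$ to lie in the announced family ($\mathcal{H}_n$, or $K_n\setminus M$ with the prescribed $|M|$, or $K_n$).

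What remains — and the only part that requires any work — is the ``if'' half: I must check that each graph $G$ in the listed family actually satisfies $\overline{\lambda}_{n-1}(G)\le\ell$ (so that it is a legitimate competitor), after which $e(G)=\text{bound}$ is a one-line count. For $G\in\mathcal{H}_n$ (this covers $1\le\ell\le\lfloor\tfrac{n-5}{2}\rfloor$ and also $\ell=\lfloor\tfrac{n-3}{2}\rfloor$ with $n$ even, where the two added vertices have degree $\tfrac{n-4}{2}=\ell$) I would argue straight from degrees: the two added vertices $v_1,v_2$ are nonadjacent and each has degree $\ell$, and for every $S\subseteq V(G)$ with $|S|=n-1$ the single vertex of $\bar{S}$ can miss at most one of $v_1,v_2$, say $v_i\in S$; since any tree connecting $S$ uses an edge incident with $v_i$, we get $\lambda(S)\le d_G(v_i)=\ell$, hence $\overline{\lambda}_{n-1}(G)\le\ell$. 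For the remaining extremal graphs $K_n\setminus M$ I would simply invoke the edge estimates already established: $(1)$ of Lemma~\ref{lem5} for $n$ odd, $\ell=\lfloor\tfrac{n-1}{2}\rfloor$, $|M|=1$; $(2)$ of Lemma~\ref{lem5} for $n$ even, $\ell=\lfloor\tfrac{n-1}{2}\rfloor$, $|M|=\tfrac n2$; $(3)$ of Lemma~\ref{lem6} for $n$ odd, $\ell=\lfloor\tfrac{n-3}{2}\rfloor$, $|M|=n-1$; and, for $\ell=\lfloor\tfrac{n+1}{2}\rfloor$, $G=K_n$ with $\overline{\lambda}_{n-1}(K_n)=n-\lceil\tfrac{n-1}{2}\rceil=\lfloor\tfrac{n+1}{2}\rfloor$ by Observation~\ref{obs3}.

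The main obstacle is thus not a genuine difficulty but a bookkeeping one: one must verify that the case-split according to the parity of $n$ and the value of $\ell$ assigns each extremal family to the correct lemma among Lemmas~\ref{lem5}--\ref{lem6} (or to the degree argument, or to Observation~\ref{obs3}), and that the union of the ranges $1\le\ell\le\lfloor\tfrac{n-5}{2}\rfloor$, $\ell=\lfloor\tfrac{n-3}{2}\rfloor$, $\ell=\lfloor\tfrac{n-1}{2}\rfloor$, $\ell=\lfloor\tfrac{n+1}{2}\rfloor$ indeed exhausts $1\le\ell\le\lfloor\tfrac{n+1}{2}\rfloor$. No idea beyond Theorem~\ref{th4} and the Nash--Williams--Tutte input already built into Lemmas~\ref{lem5} and~\ref{lem6} is needed. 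I would present the argument in exactly this order: the reduction to Theorem~\ref{th4} (inequality and ``only if''), then the degree argument for $\mathcal{H}_n$, then the short verifications for the $K_n\setminus M$ families and for $K_n$.
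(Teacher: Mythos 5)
Your proposal is correct and follows essentially the same route as the paper: reduce to Theorem~\ref{th4} via $\overline{\kappa}_{n-1}(G)\le\overline{\lambda}_{n-1}(G)$ (Observation~\ref{obs1}), then settle the boundary values of $\ell$ and the equality characterization with the same supporting lemmas. The only (welcome) difference is that you explicitly verify that each extremal graph satisfies $\overline{\lambda}_{n-1}(G)\le\ell$ --- by a degree count at the two added vertices for $\mathcal{H}_n$ and by Lemmas~\ref{lem5} and~\ref{lem6} otherwise --- whereas the paper delegates this to Propositions~\ref{pro2} and~\ref{pro4} and leaves the $\mathcal{H}_n$ case implicit.
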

\begin{proof}
Since $\overline{\lambda}_{n-1}(G)\leq \ell  \ (1\leq \ell\leq
\lfloor\frac{n-5}{2}\rfloor)$, it follows that
$\overline{\kappa}_{n-1}(G)\leq \overline{\lambda}_{n-1}(G)\leq
\ell$ and hence $e(G)\leq {{n-2}\choose{2}}+2\ell$ by Theorem
\ref{th4}. Suppose $e(G)={{n-2}\choose{2}}+2\ell$. Since
$\overline{\kappa}_{n-1}(G)\leq \overline{\lambda}_{n-1}(G)\leq
\ell$, we have $G\in \mathcal {H}_n$ by Theorem \ref{th4}. For
$\ell=\lfloor\frac{n+1}{2}\rfloor$, $\lfloor\frac{n-1}{2}\rfloor$
and $\lfloor\frac{n-3}{2}\rfloor$, respectively, the conclusion
holds by Propositions \ref{pro2} and \ref{pro4}.

\end{proof}

\begin{corollary}\label{cor4}
For $1\leq \ell\leq \lfloor\frac{n+1}{2}\rfloor$ and $n\geq 12$,
$$
g(n;\overline{\lambda}_{n-1}\leq \ell)=\left\{
\begin{array}{ll}
{{n-2}\choose{2}}+2\ell, &if~1\leq \ell\leq
\lfloor\frac{n-5}{2}\rfloor,~or~\ell=
\lfloor\frac{n-3}{2}\rfloor~and~$n$~is~even;\\
{{n-2}\choose{2}}+2\ell+1,&if~\ell=
\lfloor\frac{n-3}{2}\rfloor~and~$n$~is~odd;\\
{{n-1}\choose{2}}+\ell,&if~\ell=
\lfloor\frac{n-1}{2}\rfloor~and~$n$~is~even;\\
{{n-1}\choose{2}}+2\ell-1,&if~\ell=
\lfloor\frac{n-1}{2}\rfloor~and~$n$~is~odd;\\
{{n}\choose{2}},&if~\ell=\lfloor\frac{n+1}{2}\rfloor.
\end{array}
\right.
$$
\end{corollary}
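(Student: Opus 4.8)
The plan is to derive Corollary \ref{cor4} directly from Theorem \ref{th5}. Recall that by definition
$$g(n;\overline{\lambda}_{n-1}\leq \ell)=\max\{e(G)\mid |V(G)|=n \text{ and } \overline{\lambda}_{n-1}(G)\leq \ell\},$$
and note that any graph $G$ for which $\overline{\lambda}_{n-1}(G)$ is meaningful and at most $\ell$ (with $\ell\geq 1$) is connected, so the maximum effectively ranges over connected graphs of order $n$. Theorem \ref{th5} supplies, for each of the six ranges of $\ell$, an upper bound on $e(G)$ together with the fact that this bound is attained by an explicit connected graph (a member of $\mathcal{H}_n$, or $K_n\setminus M$ for a suitable edge set $M$, or $K_n$ itself). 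Hence $g(n;\overline{\lambda}_{n-1}\leq \ell)$ equals exactly the upper bound furnished by Theorem \ref{th5}.

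It then remains only to rewrite that piecewise expression in the form displayed in Corollary \ref{cor4}, which is a matter of substituting the exact value of $\ell$ in each boundary case. For instance, when $\ell=\lfloor\frac{n-3}{2}\rfloor$ and $n$ is odd we have $\ell=\frac{n-3}{2}$, so $n-2=2\ell+1$ and the bound ${{n-2}\choose{2}}+n-2$ becomes ${{n-2}\choose{2}}+2\ell+1$; when $n$ is even, $\ell=\frac{n-4}{2}$ and $n-4=2\ell$, so ${{n-2}\choose{2}}+n-4={{n-2}\choose{2}}+2\ell$, which coincides with the generic formula valid for $1\leq\ell\leq\lfloor\frac{n-5}{2}\rfloor$. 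Similarly, for $\ell=\lfloor\frac{n-1}{2}\rfloor$ one checks $\frac{n-2}{2}=\ell$ when $n$ is even and $n-2=2\ell-1$ when $n$ is odd, which recasts the corresponding bounds ${{n-1}\choose{2}}+\frac{n-2}{2}$ and ${{n-1}\choose{2}}+n-2$ as ${{n-1}\choose{2}}+\ell$ and ${{n-1}\choose{2}}+2\ell-1$. The cases $1\leq \ell\leq \lfloor\frac{n-5}{2}\rfloor$ and $\ell=\lfloor\frac{n+1}{2}\rfloor$ are already in the required shape.

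Since this is a direct corollary, there is no genuine obstacle; the only point deserving a word of care is the observation that the extremal graphs listed in Theorem \ref{th5} are indeed connected, so that the maximum defining $g(n;\overline{\lambda}_{n-1}\leq \ell)$ is actually attained and equals the stated bound rather than being strictly smaller.
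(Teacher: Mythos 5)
Your proposal is correct and matches the paper's (implicit) argument: the paper states Corollary \ref{cor4} as an immediate consequence of Theorem \ref{th5}, exactly as you do, with the same substitutions ($n-2=2\ell+1$, $n-4=2\ell$, $\frac{n-2}{2}=\ell$, $n-2=2\ell-1$ in the respective parity cases) converting the bounds into the displayed piecewise formula. Your added remark that the extremal graphs of Theorem \ref{th5} are connected, so the maximum is actually attained, is the right point of care and is consistent with the paper.
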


\noindent{\textbf{\itshape Remark.}} It is not easy to determine the
exact value of $f(n;\overline{\kappa}_k\leq \ell)$ and
$g(n;\overline{\lambda}_k\leq \ell)$ for a general $k$. So we hope
to give a sharp lower bound of them. We construct a graph $G$ of
order $n$ as follows: Choose a complete graph $K_{k-1} \ (1\leq
\ell\leq \lfloor\frac{k-1}{2}\rfloor)$. For the remaining $n-k+1$
vertices, we join each of them to any $\ell$ vertices of $K_{k-1}$.
Clearly, $\overline{\kappa}_{n-1}(G)\leq
\overline{\lambda}_{n-1}(G)\leq \ell$ and $e(G)=
{{k-1}\choose{2}}+(n-k+1)\ell$. So $f(n;\overline{\kappa}_k\leq
\ell)\geq {{k-1}\choose{2}}+(n-k+1)\ell$ and
$g(n;\overline{\lambda}_k\leq \ell)\geq
{{k-1}\choose{2}}+(n-k+1)\ell$. From Theorems \ref{th4} and
\ref{th5}, we know that these two bounds are sharp for $k=n,n-1$.

\small

\end{document}